\newcommand{\Z}{\mathbb{Z}}
\newcommand{\Q}{\mathbb{Q}}
\newcommand{\cA}{\mathcal{A}}
\newcommand{\cL}{\mathcal{L}}
\newcommand{\cP}{\mathcal{P}}
\newcommand{\wtil}[1]{\widetilde{#1}}
\newcommand{\ol}[1]{\overline{#1}}
\DeclareMathOperator{\Gal}{Gal}
\DeclareMathOperator{\Coker}{Cok}
\DeclareMathOperator{\Cok}{Cok}
\DeclareMathOperator{\Ker}{Ker}
\DeclareMathOperator{\Fitt}{Fitt}
\DeclareMathOperator{\pd}{pd}
\DeclareMathOperator{\id}{id}
\DeclareMathOperator{\Cl}{Cl}
\DeclareMathOperator{\ord}{ord}
\DeclareMathOperator{\Hom}{Hom}
\DeclareMathOperator{\Ext}{Ext}
\DeclareMathOperator{\rank}{rank}
\DeclareMathOperator{\cha}{char}
\let\oldenumerate\enumerate
\renewcommand{\enumerate}{
   \oldenumerate
   \setlength{\itemsep}{1pt}
   \setlength{\parskip}{0pt}
   \setlength{\parsep}{0pt}
}
\let\olditemize\itemize
\renewcommand{\itemize}{
   \olditemize
   \setlength{\itemsep}{1pt}
   \setlength{\parskip}{0pt}
   \setlength{\parsep}{0pt}
}
\theoremstyle{plain}
\newtheorem{thm}{Theorem}[section]
\newtheorem{lem}[thm]{Lemma}
\newtheorem{prop}[thm]{Proposition}
\newtheorem{cor}[thm]{Corollary}
\newtheorem{claim}[thm]{Claim}
\newtheorem{ass}[thm]{Assumption}
\theoremstyle{definition}
\newtheorem{defn}[thm]{Definition}
\newtheorem{rem}[thm]{Remark}
\newtheorem{eg}[thm]{Example}
\newcommand{\bE}{\mathbb{E}}
\newcommand{\ul}[1]{\underline{#1}}
\newcommand{\hZ}{\hat{\Z}}
\newcommand{\GamLambda}{{}_{\Gamma}\Lambda}
\DeclareMathOperator{\Div}{Div}
\DeclareMathOperator{\Pic}{Pic}
\DeclareMathOperator{\Jac}{Jac}
\DeclareMathOperator{\Frac}{Frac}
\DeclareMathOperator{\depth}{depth}
\DeclareMathOperator{\Image}{Im}
\DeclareMathOperator{\Tor}{Tor}
\title
[Jacobian groups of graphs]
{Fitting ideals of Jacobian groups of graphs}
\author[T. Kataoka]{Takenori Kataoka}
\address{Department of Mathematics, Faculty of Science Division II, Tokyo University of Science.
1-3 Kagurazaka, Shinjuku-ku, Tokyo 162-8601, Japan}
\email{tkataoka@rs.tus.ac.jp}
\keywords{graphs, Jacobian groups, Ihara zeta functions, Fitting ideals}
\subjclass[2020]{05C25 (Primary), 11R23, 16E05}
\date{\today}
\begin{document}

\maketitle

\begin{abstract}
The Jacobian group of a graph is a finite abelian group through which we can study the graph in an algebraic way.
When the graph is a finite abelian covering of another graph, the Jacobian group is equipped with the action of the Galois group.
In this paper, we study the Fitting ideal of the Jacobian group as a module over the group ring.
We also study the corresponding question for infinite coverings.
Additionally, this paper includes module-theoretic approach to Iwasawa theory for graphs.
\end{abstract}

\section{Introduction}\label{sec:intro}

For a (finite) graph $X$, let $\Jac(X)$ denote its Jacobian group, which is also known as the sandpile group.
It is known that $\Jac(X)$ is a finite abelian group.
See \S \ref{sec:defn_J_graph} for basic notions about graphs and their Jacobian groups.

Let $Y/X$ be an abelian covering of connected graphs and $\Gamma$ its Galois group.
Then the Jacobian group $\Jac(Y)$ is naturally equipped with a $\Z[\Gamma]$-module structure, which is the main theme of the present paper.
More concretely, we focus on its Fitting ideal (see \S \ref{ss:Fitt_defn} for the definition).
The main result of this paper gives a complete description of the Fitting ideal
\[
\Fitt_{\Z[\Gamma]/(N_{\Gamma})}(\Jac(Y)/N_{\Gamma}\Jac(Y)).
\]
Here, $N_{\Gamma}$ denotes the norm element of $\Gamma$.
See \S \ref{ss:main_result} for the concrete statement.
This result can be regarded as an analogue of a result of Atsuta and the author \cite{AK21} in number theory, as explained in Remark \ref{rem:AK} below.

We will also study infinite abelian coverings in a sense in \S \ref{sec:Fitt_infin}.
In this case, we obtain a complete description of the Fitting ideal of the associated Jacobian group itself, without dividing by the Jacobian group of the base graph.

The study of infinite coverings of graphs is morally an analogue of Iwasawa theory in number theory.
Recently such an analogue of Iwasawa theory for graphs is developed (e.g., Gonet \cite{Gon22} and a series of work of Valli\`{e}res including \cite{MV21b}).
The present work is directly inspired by this situation surrounding graph theory and Iwasawa theory.
Moreover, in \S \ref{sec:Iwa}, as a digression, we give concise proofs of the analogues of the Iwasawa class number formula and of Kida's formula.
Those results are already proved by others, but the author hopes that this makes Iwasawa theory for graphs more accessible for those who are familiar with techniques in Iwasawa theory.

\subsection{Main result for finite coverings}\label{ss:main_result}

Now we state the main result.
Let $Y/X$ be an abelian covering of connected graphs and $\Gamma$ its Galois group.
In \S \ref{sec:defn_J_graph}, we will introduce an element
$
Z_{Y/X} \in \Z[\Gamma],
$
which is related to the (equivariant) Ihara zeta function via the three-term determinant formula (see \S \ref{ss:Ihara}).

Let us take a decomposition
\[
\Gamma = \Delta_1 \times \cdots \times \Delta_s,
\]
where $\Delta_l$ is a cyclic group of order $n_l \geq 1$ for each $1 \leq l \leq s$.
For each $l$, we fix a generator $\sigma_l$ of $\Delta_l$ and define $\tau_l$, $\nu_l \in \Z[\Delta_l] \subset \Z[\Gamma]$ by
\[
\tau_l = \sigma_l - 1,
\quad
\nu_l = 1 + \sigma_l + \sigma_l^2 + \cdots + \sigma_l^{n_l-1}.
\]
Then $\nu_l$ is the norm element of $\Delta_l$.
The norm element of $\Gamma$, defined by $N_{\Gamma} = \sum_{\gamma \in \Gamma} \gamma$, satisfies $N_{\Gamma} = \nu_1 \cdots \nu_s$.
We also define an element $D_l \in \Z[\Delta_l]$ by
\[
D_l = \sigma_l + 2 \sigma_l^2 + \cdots + (n_l-1) \sigma_l^{n_l-1}.
\]

The main result is the following.
The proof will be given in \S \S \ref{sec:Fin_arith}--\ref{sec:fin_alg}.

\begin{thm}\label{thm:main_fin}
We have
\begin{align}
& \Fitt_{\Z[\Gamma]/(N_{\Gamma})}(\Jac(Y)/N_{\Gamma}\Jac(Y))\\
& \quad = Z_{X, \Gamma} \sum_{l=1}^s \left( \nu_1 \cdots \nu_{l-1} \cdot \frac{D_l}{n_l} \cdot \nu_{l+1} \cdots \nu_s \right) \\
& \quad \quad + Z_{X, \Gamma} \left(\nu_1^{e_1} \cdots \nu_s^{e_s} \tau_1^{f_1} \cdots \tau_s^{f_s} \, \middle| \, \begin{matrix} 0 \leq e_l \leq 1, f_l \geq 0, \\ e_1 + \cdots + e_s + f_1 + \cdots + f_s = s- 2 \end{matrix} \right).
\end{align}
Here the right hand side, which is literally an ideal of $\Z[\Gamma]$, is regarded as an ideal of $\Z[\Gamma]/(N_{\Gamma})$ via the natural projection.
\end{thm}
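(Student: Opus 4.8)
The proof splits into two parts, matching \S\S\ref{sec:Fin_arith}--\ref{sec:fin_alg}: an \emph{arithmetic} step that extracts from the covering $Y/X$ an explicit $\Z[\Gamma]$-presentation of $\Jac(Y)$ in which $Z_{X,\Gamma}$ appears, and an \emph{algebraic} step that computes the Fitting ideal from such a presentation. It is worth noting at the outset that, since Fitting ideals commute with base change, $\Fitt_{\Z[\Gamma]/(N_\Gamma)}(\Jac(Y)/N_\Gamma\Jac(Y))$ is nothing but the image in $\Z[\Gamma]/(N_\Gamma)$ of $\Fitt_{\Z[\Gamma]}(\Jac(Y))$; so the task is to pin down $\Fitt_{\Z[\Gamma]}(\Jac(Y))$ only as far as is needed to read it off modulo $N_\Gamma$, and the role of dividing by $N_\Gamma$ is precisely to suppress the trivial character, where everything degenerates.

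For the arithmetic step I would work with the $\Gamma$-equivariant Laplacian $L_{Y/X}$, a square matrix over $\Z[\Gamma]$ indexed by the vertices of $X$, whose reduction along the augmentation $\Z[\Gamma]\to\Z$ is the Laplacian of $X$ and which, acting on $\Z[\Gamma]^{V(X)}\cong\Z^{V(Y)}$, is the Laplacian of $Y$. This produces a canonical short exact sequence $0\to\Jac(Y)\to\Coker(L_{Y/X})\to\Z\to0$ of $\Z[\Gamma]$-modules (trivial action on $\Z$), in which the summand $\Z$ records exactly the corank-one degeneracy of $L_{Y/X}$ at the trivial character; and, via the three-term determinant formula of \S\ref{ss:Ihara}, $Z_{X,\Gamma}$ is the determinantal invariant controlling $\Coker(L_{Y/X})$ once this degeneracy is removed. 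The substantive work is to turn this into an honest finite square presentation of $\Jac(Y)$ itself — peeling off the trivial $\Z$ — so that Theorem \ref{thm:main_fin} becomes the purely module-theoretic claim: for a $\Z[\Gamma]$-module $M$ presented by an explicit square matrix whose determinant is $Z_{X,\Gamma}$ up to explicit factors coming from the base graph and from that degeneracy, $\Fitt_{\Z[\Gamma]/(N_\Gamma)}(M/N_\Gamma M)$ equals the right-hand side of the theorem. This parallels the reduction in the number-theoretic analogue \cite{AK21}, with the Ihara zeta element in place of the Stickelberger/$L$-element there.

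For the algebraic step the plan is induction on $s$, the number of cyclic factors of $\Gamma=\Delta_1\times\cdots\times\Delta_s$. The base case $s=1$ rests on the structure of $\Z[\Gamma]/(N_\Gamma)=\Z[\sigma_1]/(\nu_1)$ together with the identity $\tau_lD_l=n_l-\nu_l$ (so $\tau_lD_l\equiv n_l$ modulo $N_\Gamma$), which is exactly what makes the fractional generators $\nu_1\cdots\nu_{l-1}\,\tfrac{D_l}{n_l}\,\nu_{l+1}\cdots\nu_s$ meaningful and lets one compute the Fitting ideal of a cyclically presented $\Z[\sigma_1]/(\nu_1)$-module by inspection. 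For the inductive step I would split off one factor, writing $\Gamma=\Delta_1\times\Gamma'$ with $\Gamma'=\Delta_2\times\cdots\times\Delta_s$, and compare the computation over $\Z[\Gamma]/(N_\Gamma)$ with its two natural base changes along $\nu_1$: inverting $\nu_1$ (which collapses $\Delta_1$ and, through the inductive hypothesis, ties the problem to $\Z[\Gamma']/(N_{\Gamma'})$), and setting $\nu_1=0$ (where $N_\Gamma$ already vanishes and one is left with a group-ring-type ring over $\Gamma'$). Reassembling these two descriptions by the standard behaviour of Fitting ideals under change of rings and short exact sequences (\S\ref{ss:Fitt_defn}), the ideal for $\Gamma$ is forced from the ideal for $\Gamma'$, and the discrepancy introduced by the reassembly is precisely what raises the total degree of the monomial part from $s-3$ to $s-2$ and supplies the $D_1/n_1$-term for the new index.

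The principal obstacle will be the bookkeeping in this inductive step: one must verify that the reassembly yields \emph{exactly} the stated ideal — neither larger nor smaller — and in particular that the monomial error has total degree precisely $s-2$ (the ``$-2$'' combining the corank-one degeneracy of the Laplacian with the passage from $\Coker(L_{Y/X})$ to $\Jac(Y)$), and that the various $D_l/n_l$-contributions recombine into a genuine ideal of $\Z[\Gamma]/(N_\Gamma)$ despite their denominators. A secondary obstacle, in the arithmetic step, is the clean identification of $Z_{X,\Gamma}$: stripping off the trivial $\Z$-summand leaves \emph{some} determinant, and one must check, using the precise normalization furnished by the three-term determinant formula, that it is the Ihara zeta element on the nose — not merely up to an unwanted unit or an extra factor — since the entire right-hand side of the theorem is scaled by $Z_{X,\Gamma}$.
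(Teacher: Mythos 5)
Your proposal diverges from the paper's proof in both steps, and I think there are genuine gaps in each.

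In the \emph{arithmetic step}, you want to ``turn this into an honest finite square presentation of $\Jac(Y)$ itself --- peeling off the trivial $\Z$.'' That isn't available: $\Jac(Y)$ does not have projective dimension $\leq 1$ over $\Z[\Gamma]$ in general, so there is no square presentation of it to peel. (The paper explicitly remarks that $\Fitt_{\Z[\Gamma]}(\Jac(Y))$ ``seems to be out of our reach,'' which is precisely the symptom.) The module with a square presentation is $\Pic(Y)$, not $\Jac(Y)$; the move that makes things work is \emph{not} to peel off $\Z$, but to apply $\bar R \otimes_R -$ to the sequence $0\to\Jac(Y)\to\Pic(Y)\to\Z\to 0$, observe (Lemma~\ref{lem:Tor1}) that $\Tor_1^R(\bar R,\Z)=0$ so one stays exact with $\Z$ replaced by $\Z/\#\Gamma\Z$, and then invoke the \emph{shift theory} of \S\ref{ss:shift_defn}: since $\bar R\otimes\Pic(Y)\in\cP_{\bar R}$ has Fitting ideal $(\bar{Z_{Y/X}})$, the defining property of $\Fitt^{[1]}$ gives $\Fitt_{\bar R}(\bar R\otimes\Jac(Y)) = \bar{Z_{Y/X}}\cdot\Fitt^{[1]}_{\bar R}(\Z/\#\Gamma\Z)$. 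That shift device is the new idea in the arithmetic half, and your outline skips it.

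In the \emph{algebraic step}, the proposed induction on $s$ by comparing the two base changes ``invert $\nu_1$'' and ``set $\nu_1=0$'' does not determine the ideal: $\Z[\Delta_1]$ is not a product of a $\nu_1$-inverted piece and a $\nu_1=0$ piece, because there is torsion over primes dividing $n_1$, and a fractional ideal is not pinned down by its images in these two non-flat localizations. This is exactly where the ``reassembly'' you appeal to fails, and it is where the paper does something quite different: it reduces (Lemma~\ref{lem:Fitt1}) to an \emph{unshifted} Fitting ideal $\Fitt_R(I/R(\#\Gamma-N_\Gamma))$, builds an explicit free resolution of $I$ by tensoring $s$ length-two complexes, notes via the key identity $\tau_l D_l = n_l - \nu_l$ that $\sum_l b_l\tau_l = \#\Gamma - N_\Gamma$ with $b_l$ involving the Kolyvagin derivative $D_l$, and then computes the Fitting ideal of the resulting matrix by a combinatorial argument on spanning trees (Proposition~\ref{prop:Fitt_p2}, proved via Claims~\ref{claim2}--\ref{claim3}). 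That tree combinatorics is precisely what produces the $(\tau_1,\dots,\tau_s)^{s-2}$ degree bookkeeping you flag as the ``principal obstacle''; your sketch offers no mechanism that would yield it. One secondary point: your ``so the task is to pin down $\Fitt_{\Z[\Gamma]}(\Jac(Y))$ only as far as needed'' is a true but empty reformulation, since no handle on $\Fitt_{\Z[\Gamma]}(\Jac(Y))$ is given; the whole point of the shift theory is to avoid needing one.
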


\begin{rem}\label{rem:Kol}
In the theory of Euler systems in number theory, the element $D_l$ is often called the Kolyvagin derivative operator and plays an important role.
It seems to be interesting that this element is useful in computation that is not directly related to the theory of Euler systems.
Indeed, as far as the author is aware, it is the first appearance of the Kolyvagin derivative operator in computation of Fitting ideals.
\end{rem}

In Proposition \ref{prop:NormY}, we will show an isomorphism
$
\Jac(X) \simeq N_{\Gamma} \Jac(Y).
$
Therefore, Theorem \ref{thm:main_fin} can be viewed as a description of the Fitting ideal of $\Jac(Y)/\Jac(X)$.
Unfortunately, determining $\Fitt_{\Z[\Gamma]}(\Jac(Y))$ itself seems to be out of our reach.

\begin{rem}\label{rem:AK}
Theorem \ref{thm:main_fin} can be regarded as an analogue of a result of Atsuta and the author \cite{AK21} on the Fitting ideal of class groups in number theory.
Let us roughly review the result.
Let $L/K$ be a finite abelian extension of number fields such that $K$ is totally real and $L$ is a CM-field.
We consider the $T$-ray class group $\Cl_L^T$ of $L$, where $T$ is an auxiliary finite set of finite primes of $K$.
Then in \cite{AK21}, (assuming the relevant equivariant Tamagawa number conjecture), we gave a complete description of the Fitting ideal
\[
\Fitt_{\Z[\Gal(L/K)]^-}(\Cl_L^{T, -}),
\]
where the superscript $(-)^-$ denotes the minus component with respect to the complex conjugation.
The plus component seems to be out of our reach.
This is analogous to the situation in Theorem \ref{thm:main_fin}.
\end{rem}

\subsection{Idea of proof}\label{ss:comments}

In number theory, there are already a number of studies on the Fitting ideals of arithmetic modules.
In particular, in the field of Iwasawa theory, the Fitting ideals of Iwasawa modules have been studied by Greither--Kurihara (e.g., \cite{GK15}, \cite{GKT20} with Tokio, etc.).

In \cite{Kata_05}, the author introduced the ``shift theory'' of Fitting ideals, which we will review in \S \ref{ss:shift_defn}.
It is a useful technique to compute the Fitting ideals of arithmetic modules.
Roughly speaking, the technique has two steps: 
\begin{itemize}
\item[(1)]
we study the arithmetic module and obtain a description of the Fitting ideal using an explicit algebraic factor, and 
\item[(2)]
we compute the algebraic factor in a purely algebraic way.
\end{itemize}
The first applications of this theory in \cite{Kata_05} were to Iwasawa theory, i.e., infinite extensions of number fields.
Then Atsuta and the author \cite{AK21} made an application to class groups of number fields as explained in Remark \ref{rem:AK}.

In this paper, we again employ the shift theory as the basic technique to prove both Theorem \ref{thm:main_fin} and the result for infinite coverings.
Therefore, we follow the two steps explained above.
See Theorems \ref{thm:Fitt_fin1} and \ref{thm:Fitt_fin} for the first and second steps, respectively.

The algebraic factor to be computed in Theorem \ref{thm:Fitt_fin} seems to be a new one as discussed in Remark \ref{rem:Kol}.
Moreover, the proof of the formula is not easy, and it is the most technical part in this paper.
We employ a combinatoric method (using graph theory), though the problem is purely algebraic.
Note that Greither--Kurihara--Tokio \cite{GKT20} also applied graph theory to compute Fitting ideals.

On the other hand, the algebraic factor to be computed for the infinite covering case does not seem to be very new.
Indeed, it can be computed by using a formula that has been obtained in \cite{AK21}.
However, a relatively minor issue is that in this paper we deal with commutative rings that are not necessarily noetherian.
Therefore, we have to generalize the shift theory, which was developed only over noetherian rings in \cite{Kata_05}.
This is explained in \S \ref{ss:shift_defn}.

\subsection{Organization of this paper}\label{ss:org}

In \S \ref{sec:defn_J_graph}, we introduce basic notions about graphs.
Then in \S \ref{sec:Fin_arith}, we reduce the proof of Theorem \ref{thm:main_fin} to an algebraic problem.
The algebraic problem is solved in \S \ref{sec:fin_alg}.

After preliminaries in \S \ref{sec:pre} on voltage graphs and their derived graphs, in \S \ref{sec:Fitt_infin} we state and prove the result for infinite coverings.

In \S \ref{sec:dual}, we observe the self-duality of the Jacobian groups, which is a contrast to the analogue in number theory.
In \S \ref{sec:Iwa}, we explain the module-theoretic approach to Iwasawa theory for graphs.
Both \S \S \ref{sec:dual} and \ref{sec:Iwa} can be read independently.
Finally in \S \ref{sec:Fitt_gen}, we review the definition of the Fitting ideals and the shift theory.

\section{Graphs and their Jacobian groups}\label{sec:defn_J_graph}

In this section, we introduce basic notions about graphs and their Jacobian groups.
References are Corry--Perkinson \cite[Chapters 1 and 2]{CP18} and Baker--Norine \cite[\S 1.3]{BN07}.
One can also consult recent papers of Valli\`eres such as Hammer--Mattman--Sands--Valli\`eres \cite[\S 2]{HMSV} and Ray--Valli\`eres \cite[\S 2]{RV22}.

In this paper, graphs are always assumed to be finite.
We allow graphs to have multi-edges and loops (so one may call them multigraphs).
More precisely, we define graphs as follows, using Serre's formalism \cite[Chapter I, \S 2.1]{Ser80}.

\begin{defn}
A graph $X$ consists of a finite set $V_X$ of vertices, a finite set $\bE_X$ of edges, an automorphism of $\bE_X$ denoted by $e \mapsto \ol{e}$, and two maps $s_X$, $t_X$ from $\bE_X$ to $V_X$ (often abbreviated to $s = s_X$ and $t = t_X$), satisfying the following:
\begin{itemize}
\item
For any $e \in \bE_X$, we have $\ol{e} \neq e$ and $\ol{\ol{e}} = e$, that is, the automorphism $e \mapsto \ol{e}$ is an involution of $\bE_X$ without fixed points.
\item
For any $e \in \bE_X$, we have $s(\ol{e}) = t(e)$ and $t(\ol{e}) = s(e)$.
\end{itemize}

Each element $e \in \bE_X$ is regarded as an edge from $s(e)$ to $t(e)$, and $\ol{e}$ is regarded as the inverse of $e$.
Let us write $E_X$ for the quotient set of $\bE_X$ obtained by identifying $e$ and $\ol{e}$.
Then we have a canonical two-to-one map from $\bE_X$ to $E_X$, so their cardinalities satisfy $\# E_X = \frac{1}{2} \cdot \# \bE_X$.

For each $v \in V_X$, we define
\[
\bE_{X, v} = \{ e \in \bE_X \mid s(e) = v\},
\]
which is the set of edges that start from $v$.
\end{defn}

This formalism involving $\bE_X$ and the involution will be useful for introducing voltage graphs in \S \ref{sec:pre}.
On the other hand, we can regard $X$ as an undirected (multi-)graph through the quotient set $E_X$.
The notions that we will introduce in this section are essentially defined for the associated undirected graph structure.

In this paper, for simplicity, we usually deal with connected graphs.
Being connected is not an essential assumption since the general case can be easily deduced.

In the rest of this section, let $X$ be a connected graph.

\begin{defn}
We define the divisor group $\Div(X)$ as the free $\Z$-module on the set $V_X$, namely, 
\[
\Div(X) = \bigoplus_{v \in V_X} \Z [v].
\]
Let us write
\[
\deg_X: \Div(X) \to \Z
\]
for the $\Z$-homomorphism that sends $[v]$ to $1$ for any $v \in V_X$.
We define $\Div^0(X)$ as the kernel of $\deg_X$.
\end{defn}

We obviously have an exact sequence
\begin{equation}\label{eq:D0}
0 \to \Div^0(X) \to \Div(X) \overset{\deg_X}{\to} \Z \to 0.
\end{equation}

\begin{defn}\label{defn:LAD}
We define $\Z$-homomorphisms
\[
\cL_X, A_X, D_X: \Div(X) \to \Div(X)
\]
by
\[
A_X([v]) = \sum_{e \in \bE_{X, v}} [t(e)],
\quad
D_X([v]) = (\# \bE_{X, v}) [v]
\]
for $v \in V_X$, and $\cL_X = D_X - A_X$.
The presentation matrices of $\cL_X$, $A_X$, and $D_X$ (with respect to the basis $\{ [v] \}_{v \in V_X}$) are called the Laplacian matrix, the adjacency matrix, and the degree matrix, respectively.
\end{defn}

It is easy to see $\deg_X \circ \cL_X = 0$, that is, the image of $\cL_X$ is contained in $\Div^0(X)$.
Therefore, we can make the following definition.

\begin{defn}\label{defn:JP}
We define the Jacobian group $\Jac(X)$ and the Picard group $\Pic(X)$ as the cokernels
\[
\Jac(X) = \Coker(\cL_X: \Div(X) \to \Div^0(X))
\]
and
\[
\Pic(X) = \Coker(\cL_X: \Div(X) \to \Div(X)).
\]
\end{defn}

Therefore, by \eqref{eq:D0}, we have an exact sequence
\begin{equation}\label{eq:JP_ex}
0 \to \Jac(X) \to \Pic(X) \overset{\deg_X}{\to} \Z\to 0.
\end{equation}

A fundamental property of the Jacobian group is the following.

\begin{thm}[Kirchhoff's matrix tree theorem]\label{thm:Kirch}
The Jacobian group $\Jac(X)$ is a finite $\Z$-module.
In fact, the cardinality of $\Jac(X)$ is equal to the number of spanning trees of $X$.
\end{thm}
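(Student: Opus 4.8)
The plan is to prove both assertions simultaneously by computing the order of $\Jac(X)$ as a determinant and then invoking the combinatorial matrix-tree identity. First I would fix an ordering $v_1, \dots, v_n$ of $V_X$ and observe that, since $X$ is connected, the Laplacian $\cL_X$ has rank exactly $n-1$: indeed $\deg_X \circ \cL_X = 0$ shows the image lies in $\Div^0(X)$, which has rank $n-1$, and conversely connectedness forces the kernel of $\cL_X$ (acting on $\Div(X) \otimes \Q$) to be one-dimensional, spanned by $\sum_v [v]$. This already shows $\Coker(\cL_X \colon \Div(X) \to \Div^0(X))$ is finite, giving the first sentence of the theorem.

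Next I would compute the order of this cokernel. The key algebraic point is that for a square-by-almost-square situation one has the following: if $\cL_X'$ denotes the restriction of $\cL_X$ to the map $\bigoplus_{i=1}^{n-1}\Z[v_i] \to \Div^0(X)$ obtained by projecting away the last coordinate — equivalently the $(n-1)\times(n-1)$ matrix obtained from the Laplacian matrix by deleting the last row and last column, the so-called reduced Laplacian — then $\cL_X'$ is injective with finite cokernel, and in fact
\begin{equation}
\# \Jac(X) = \# \Coker(\cL_X') = \lvert \det \cL_X' \rvert.
\end{equation}
To justify replacing $\cL_X$ by $\cL_X'$ I would use the short exact sequence \eqref{eq:D0}: a basis of $\Div^0(X)$ is given by $\{[v_i] - [v_n]\}_{1 \le i \le n-1}$, and writing $\cL_X$ in terms of the source basis $\{[v_i]\}_{1\le i\le n}$ and this target basis, the vector $\sum_i [v_i]$ maps to $0$, so the map factors through the quotient by that vector; choosing $\{[v_i]\}_{1 \le i \le n-1}$ as representatives identifies the induced map with the reduced Laplacian up to a change of basis of determinant $\pm 1$. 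Hence the Fitting-ideal/Smith-normal-form computation of the cokernel's order reduces to $\lvert \det \cL_X'\rvert$.

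Finally, I would invoke the classical (purely combinatorial) matrix-tree identity $\det \cL_X' = (\text{number of spanning trees of } X)$, which follows from the Cauchy–Binet formula applied to a factorization $\cL_X = B B^{\mathsf T}$ of the Laplacian through the (oriented) incidence matrix $B$ of $X$: each nonzero $(n-1)\times(n-1)$ minor of $B$ is $\pm 1$ and corresponds to a spanning tree, while the others vanish. Since spanning-tree counts are manifestly nonnegative, the absolute value is superfluous and we get $\#\Jac(X) = \det \cL_X' = \#\{\text{spanning trees of }X\}$, completing the proof.

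The main obstacle, or rather the only genuinely delicate bookkeeping, is the passage from $\cL_X \colon \Div(X) \to \Div^0(X)$ to the square reduced Laplacian $\cL_X'$ while keeping track that no spurious factor is introduced in the order of the cokernel; everything else is either the standard rank argument from connectedness or the classical Cauchy–Binet computation, which I would simply cite (e.g.\ from \cite{CP18} or \cite{BN07}) rather than reproduce.
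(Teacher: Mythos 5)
The paper states Theorem \ref{thm:Kirch} without proof, quoting it as a classical result (the background references Corry--Perkinson and Baker--Norine are given at the start of \S 2), so there is no in-paper argument to compare against. Your proof is the standard incidence-matrix/Cauchy--Binet proof of the matrix-tree theorem and is correct. The reduction from $\cL_X\colon\Div(X)\to\Div^0(X)$ to the reduced Laplacian is handled properly: $\cL_X$ annihilates $\sum_v[v]$, the images of $[v_1],\dots,[v_{n-1}]$ form a basis of the resulting free quotient, $\{[v_i]-[v_n]\}_{i<n}$ is a basis of $\Div^0(X)$, and in these bases the induced map is literally the reduced Laplacian (using that each column of the Laplacian sums to zero), so $\#\Jac(X)=\lvert\det\cL_X'\rvert$ with no extraneous unit factor. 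One small point worth making explicit in the paper's multigraph (Serre-formalism) setting: a loop at $v$ contributes $2$ both to $(D_X)_{vv}$ and to $(A_X)_{vv}$, so it cancels out of $\cL_X$, and its column in the oriented incidence matrix $B$ (indexed by $E_X$, one column per undirected edge) is zero, so it never participates in a spanning tree; parallel edges give distinct columns of $B$ and are therefore counted separately, exactly matching the combinatorial count of spanning trees of a multigraph. With that noted, the factorization $\cL_X = BB^{\mathsf T}$ and the Cauchy--Binet computation go through as you describe.
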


Let us observe the following homological description of the Jacobian group.

\begin{lem}\label{lem:cpx_J}
Let $\iota_X: \Z \to \Div(X)$ be the $\Z$-homomorphism that sends $1$ to $\sum_{v \in V_X} [v]$.
Then the sequence
\begin{equation}\label{eq:cpx_J}
0 \to \Z \overset{\iota_X}{\to} \Div(X) \overset{\cL_X}{\to} \Div(X) \overset{\deg_X}{\to} \Z \to 0
\end{equation}
is a complex that is acyclic except for the right $\Div(X)$, at which the homology group is isomorphic to $\Jac(X)$.
\end{lem}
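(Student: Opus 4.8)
The plan is to verify the four properties defining the claim one at a time: that \eqref{eq:cpx_J} is a complex, that $\iota_X$ is injective, that $\deg_X$ is surjective, and that the sequence is exact at the left-hand $\Div(X)$; then to identify the homology at the right-hand $\Div(X)$ with $\Jac(X)$ directly from Definition \ref{defn:JP}. Three of these are essentially formal. The relation $\deg_X \circ \cL_X = 0$ was already noted before Definition \ref{defn:JP}. For $\cL_X \circ \iota_X = 0$ I would compute $\cL_X\bigl(\sum_{v \in V_X}[v]\bigr) = \sum_{v \in V_X}(\#\bE_{X,v})[v] - \sum_{v \in V_X}\sum_{e \in \bE_{X,v}}[t(e)]$, observe that the double sum runs over all $e \in \bE_X$, and reindex via the fixed-point-free involution $e \mapsto \ol{e}$ (using $t(\ol{e}) = s(e)$) to rewrite it as $\sum_{e \in \bE_X}[s(e)] = \sum_{v \in V_X}(\#\bE_{X,v})[v]$, so the two terms cancel. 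Injectivity of $\iota_X$ and surjectivity of $\deg_X$ are immediate since $V_X \neq \emptyset$. Finally, since $\Imag(\cL_X) \subseteq \Div^0(X) = \Ker(\deg_X)$, the homology of \eqref{eq:cpx_J} at the right-hand $\Div(X)$ is $\Div^0(X)/\cL_X(\Div(X))$, which is exactly $\Jac(X)$ by definition.

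The one remaining point — exactness at the left-hand $\Div(X)$ — amounts to the equality $\Ker(\cL_X) = \Z \cdot \sum_{v \in V_X}[v]$; the inclusion $\supseteq$ is the complex relation $\cL_X \circ \iota_X = 0$ just proved. For the reverse inclusion I would argue by a discrete maximum principle. Given $D = \sum_{v} a_v[v] \in \Ker(\cL_X)$, pick $v_0 \in V_X$ with $a_{v_0} = \max_{v} a_v$; reading off the coefficient of $[v_0]$ in the equation $\cL_X(D) = 0$ (again using the involution $e \mapsto \ol{e}$ to pass from edges ending at $v_0$ to edges in $\bE_{X,v_0}$) yields $\sum_{e \in \bE_{X,v_0}} (a_{v_0} - a_{t(e)}) = 0$. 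Every summand is nonnegative by maximality, so $a_{t(e)} = a_{v_0}$ for all $e \in \bE_{X,v_0}$; since $X$ is connected, iterating this gives $a_v = a_{v_0}$ for every $v \in V_X$, hence $D = a_{v_0}\sum_{v}[v] \in \Imag(\iota_X)$. (Alternatively, one can note that the Laplacian matrix is symmetric positive semidefinite and $X$ is connected, so $\Ker(\cL_X) \otimes_{\Z} \R$ is one-dimensional, and then conclude because $\sum_{v}[v]$ is a primitive vector of the free module $\Div(X)$.)

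I expect the only mildly delicate bookkeeping to be the repeated use of the involution $e \mapsto \ol{e}$ to translate between edges starting at and ending at a vertex; this is exactly what makes the two halves of the Laplacian match up. The maximum-principle step is the single place where connectedness of $X$ is genuinely used, and it is the conceptual heart of the lemma, though it is standard.
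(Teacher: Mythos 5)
Your proof is correct, and the main argument you give is genuinely different from the paper's. The paper disposes of the exactness at the left-hand $\Div(X)$ by a rank-plus-saturation argument: by Theorem \ref{thm:Kirch}, $\Jac(X)$ is finite, so $\Image(\cL_X)$ has $\Z$-rank $\#V_X - 1$ and $\Ker(\cL_X)$ has rank $1$, equal to the rank of $\Image(\iota_X)$; then $\Image(\iota_X)$ is a saturated (primitive) rank-one submodule contained in a rank-one submodule, forcing equality. You instead compute $\Ker(\cL_X)$ directly by the discrete maximum principle. Your route is more self-contained and elementary — it does not lean on Kirchhoff's theorem — but it reproves a fact (one-dimensionality of the Laplacian kernel) that the paper has effectively already imported by citing Theorem \ref{thm:Kirch}. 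The second alternative you sketch in parentheses (positive semidefiniteness of the Laplacian over $\R$ to get $\rank \Ker(\cL_X) = 1$, then primitivity of $\sum_v [v]$) is essentially the paper's argument with Kirchhoff's theorem replaced by spectral graph theory. Both approaches use connectedness in an essential way, just at different points; the paper buries it in Theorem \ref{thm:Kirch}, whereas you make it explicit in the propagation step of the maximum principle.
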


\begin{proof}
We only have to check $\Image(\iota_X) = \Ker(\cL_X)$; the other assertions are clear.
It is easy to see that $\cL_X \circ \iota_X = 0$, that is, $\Image(\iota_X) \subset \Ker(\cL_X)$.
Thanks to Theorem \ref{thm:Kirch}, the $\Z$-ranks of $\Image(\iota_X)$ and $ \Ker(\cL_X)$ are the same.
Moreover, the definition of $\iota_X$ implies that $\Image(\iota_X)$ is a saturated submodule of $\Div(X)$ (i.e., the cokernel of $\iota_X$ is $\Z$-torsion-free).
These observations imply $\Image(\iota_X) = \Ker(\cL_X)$.
\end{proof}

Now we quickly recall the notion of Galois coverings of connected graphs (see Terras \cite[Chapter 13]{Ter11} or \cite[\S 2.1]{RV22} for the details).
A covering $Y/X$ of connected graphs means that we are given a covering map $\pi: Y \to X$, which is surjective and locally isomorphic.
The degree of $Y/X$ is defined as $\# (\pi^{-1}(v))$ for any choice of $v \in V_X$.
The Galois group $\Gamma$ of $Y/X$ is defined as the automorphism group of $Y$ that respects the covering map.
We say that a covering $Y/X$ is Galois (or normal) if the order $\# \Gamma$ of $\Gamma$ is equal to the degree of $Y/X$.
We say $Y/X$ is abelian if $\Gamma$ is abelian.

\begin{defn}\label{defn:Z2}
Let $Y/X$ be a Galois covering of connected graphs with Galois group $\Gamma$.
It is easy to see that $\Div(Y)$ is a free $\Z[\Gamma]$-module and the endomorphism $\cL_Y$ on $\Div(Y)$ is a $\Z[\Gamma]$-homomorphism.
In case $\Gamma$ is abelian, we define
\begin{equation}\label{eq:Z_YX}
Z_{Y/X} = \det_{\Z[\Gamma]}(\cL_Y \mid \Div(Y)) \in \Z[\Gamma].
\end{equation}
\end{defn}

By the definitions of $\Pic(Y)$ and of Fitting ideals, we have
\begin{equation}\label{eq:Z_YX2}
\Fitt_{\Z[\Gamma]}(\Pic(Y)) = (Z_{Y/X})
\end{equation}
when $\Gamma$ is abelian.
In \S \ref{ss:Ihara}, we will see a relation between $Z_{Y/X}$ and the Ihara zeta function.

\section{Reduction to an algebraic problem}\label{sec:Fin_arith}

In this section, we describe the Fitting ideal that is concerned in Theorem \ref{thm:main_fin} by using a shifted Fitting ideal of an explicit module.

We begin with an elementary lemma.
Let $\Gamma$ be a finite group and write
\[
R = \Z[\Gamma],
\qquad
\ol{R} = R/(N_{\Gamma}),
\]
where we set $N_{\Gamma} = \sum_{\gamma \in \Gamma} \gamma \in \Z[\Gamma]$.

\begin{lem}\label{lem:Tor1}
We have
\[
\Tor_1^R(\ol{R}, \Z) = 0,
\quad
\ol{R} \otimes_R \Z \simeq \Z/(\# \Gamma)\Z.
\]
\end{lem}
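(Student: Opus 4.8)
The plan is to exploit the short exact sequence of $R$-modules $0 \to (N_\Gamma) \to R \to \ol{R} \to 0$ together with an explicit description of the principal ideal $(N_\Gamma)$. The key elementary observation is that $\Ann_R(N_\Gamma) = I_\Gamma$, the augmentation ideal (the kernel of the ring homomorphism $R \to \Z$, $\gamma \mapsto 1$): indeed $(\gamma - 1)N_\Gamma = N_\Gamma - N_\Gamma = 0$ for every $\gamma \in \Gamma$, and conversely if $r = \sum_\gamma a_\gamma \gamma$ kills $N_\Gamma$ then $0 = r N_\Gamma = \big(\sum_\gamma a_\gamma\big) N_\Gamma$ forces $\sum_\gamma a_\gamma = 0$, i.e. $r \in I_\Gamma$. (Note $N_\Gamma$ is central, so left and right annihilators agree.) Hence the surjection $R \to (N_\Gamma)$, $r \mapsto r N_\Gamma$, identifies $(N_\Gamma)$ with $R/I_\Gamma \cong \Z$ (the trivial module), via $N_\Gamma \leftrightarrow 1$.

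Next I would apply $-\otimes_R \Z$ to the short exact sequence above. Since $R$ is free over itself, $\Tor_1^R(R,\Z) = 0$, so the associated long exact sequence reduces to
\[
0 \to \Tor_1^R(\ol{R}, \Z) \to (N_\Gamma) \otimes_R \Z \xrightarrow{\ \phi\ } R \otimes_R \Z \to \ol{R} \otimes_R \Z \to 0.
\]
Using $R \otimes_R \Z = \Z$, the identification $(N_\Gamma) \cong \Z$ from the previous step, and the standard fact $\Z \otimes_R \Z = \Z$ (the augmentation ideal annihilates the trivial module), this becomes $0 \to \Tor_1^R(\ol{R}, \Z) \to \Z \xrightarrow{\ \phi\ } \Z \to \ol{R} \otimes_R \Z \to 0$. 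Tracing through the maps, $\phi$ sends the generator $N_\Gamma \otimes 1 = \sum_\gamma \gamma \otimes 1 = \sum_\gamma 1 \otimes 1 = \#\Gamma$ inside $R \otimes_R \Z = \Z$; thus $\phi$ is multiplication by $\#\Gamma$. Since $\#\Gamma \geq 1$ this map is injective, whence $\Tor_1^R(\ol{R}, \Z) = 0$, and its cokernel is $\Z/(\#\Gamma)\Z \simeq \ol{R} \otimes_R \Z$.

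The computation is short and essentially formal once $\Ann_R(N_\Gamma) = I_\Gamma$ is in hand. The only place deserving a little care is this last step: one must follow the isomorphism $(N_\Gamma) \cong \Z$ through the tensor product to confirm that $\phi$ is \emph{exactly} multiplication by $\#\Gamma$ (and not merely by $\#\Gamma$ up to a unit, although that would already suffice for the stated conclusion about the order of $\ol{R}\otimes_R\Z$). Beyond that I do not anticipate a genuine obstacle.
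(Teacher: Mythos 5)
Your proof is correct and follows essentially the same route as the paper: both tensor the short exact sequence $0 \to \Z \xrightarrow{1 \mapsto N_\Gamma} R \to \ol{R} \to 0$ with $\Z$ over $R$ and identify the induced map $\Z \to \Z$ as multiplication by $\#\Gamma$. The only difference is that you explicitly verify $\Ann_R(N_\Gamma) = I_\Gamma$ to justify the $R$-module isomorphism $(N_\Gamma) \simeq \Z$, which the paper takes as immediate when it writes down the map $\iota\colon \Z \to R$.
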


\begin{proof}
We have an exact sequence of $R$-modules
\[
0 \to \Z \overset{\iota}{\to} R \to \ol{R} \to 0,
\]
where $\iota$ sends $1$ to $N_{\Gamma}$.
Then the associated long exact sequence of $\Tor_{*}^R(-, \Z)$ reads
\[
0 \to \Tor_1^R(\ol{R}, \Z) \to \Z \otimes_R \Z \overset{\iota_*}{\to} R \otimes_R \Z \to \ol{R} \otimes_R \Z \to 0.
\]
We have natural isomorphisms
\[
\Z \otimes_R \Z \simeq \Z,
\quad
R \otimes_R \Z \simeq \Z,
\]
through which $\iota_*$ is identified with the multiplication by $\# \Gamma$ on $\Z$.
Therefore, we obtain the lemma.
\end{proof}

In the rest of this section, let $Y/X$ be a Galois covering of connected graphs.
We write $\Gamma$ for its Galois group and use the same notation $R$ and $\ol{R}$ as above.

The following is the key ingredient to prove Theorem \ref{thm:main_fin}.

\begin{prop}\label{prop:Jac_ex1}
We have an exact sequence of finite $\ol{R}$-modules
\[
0 \to \ol{R} \otimes_R \Jac(Y) \to \ol{R} \otimes_R \Pic(Y) \to \Z/(\# \Gamma)\Z \to 0.
\]
\end{prop}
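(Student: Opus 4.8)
The plan is to derive the desired sequence from the exact sequence \eqref{eq:JP_ex} applied to $Y$ by tensoring with $\ol{R}$ over $R$ and then invoking Lemma \ref{lem:Tor1}.

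First I would note that, since $Y/X$ is Galois with group $\Gamma$, the exact sequence \eqref{eq:JP_ex} for $Y$, namely
\[
0 \to \Jac(Y) \to \Pic(Y) \overset{\deg_Y}{\to} \Z \to 0,
\]
is an exact sequence of $R = \Z[\Gamma]$-modules, where $\Gamma$ acts trivially on $\Z$. This is because $\cL_Y$ is a $\Z[\Gamma]$-homomorphism (Definition \ref{defn:Z2}) and $\deg_Y$ is $\Gamma$-equivariant, sending each $\gamma[v] = [\gamma v]$ to $1$; hence the sequence \eqref{eq:D0} for $Y$ is $\Z[\Gamma]$-linear and so is the sequence of cokernels of $\cL_Y$ obtained from it via the snake lemma.

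Next I would apply $\ol{R} \otimes_R (-)$ to this short exact sequence and read off the associated long exact sequence of $\Tor_*^R(\ol{R}, -)$:
\[
\Tor_1^R(\ol{R}, \Z) \to \ol{R} \otimes_R \Jac(Y) \to \ol{R} \otimes_R \Pic(Y) \to \ol{R} \otimes_R \Z \to 0.
\]
By Lemma \ref{lem:Tor1} the left-hand term vanishes, so the map $\ol{R} \otimes_R \Jac(Y) \to \ol{R} \otimes_R \Pic(Y)$ is injective, and the same lemma identifies the cokernel $\ol{R} \otimes_R \Z$ with $\Z/(\#\Gamma)\Z$. This yields exactly the asserted sequence. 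Finally, for the finiteness claim, $\Jac(Y)$ is finite by Kirchhoff's matrix tree theorem (Theorem \ref{thm:Kirch}), hence so is its quotient $\ol{R} \otimes_R \Jac(Y) = \Jac(Y)/N_\Gamma\Jac(Y)$; the outer terms being finite, $\ol{R} \otimes_R \Pic(Y)$ is finite as an extension of finite modules.

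I do not expect a genuine obstacle here: the argument is a direct homological computation. The only points demanding a little care are the verification that \eqref{eq:JP_ex} is $R$-linear (rather than merely $\Z$-linear, as it was stated in \S\ref{sec:defn_J_graph}) and the bookkeeping of the $\Tor$ and tensor terms, both of which are settled by Lemma \ref{lem:Tor1}.
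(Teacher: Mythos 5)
Your argument is exactly the paper's: tensor the short exact sequence \eqref{eq:JP_ex} for $Y$ with $\ol{R}$ over $R$, read off the $\Tor$ long exact sequence, and invoke Lemma \ref{lem:Tor1} to kill the $\Tor_1$ term and identify $\ol{R}\otimes_R\Z$ with $\Z/(\#\Gamma)\Z$. The extra remarks on $R$-linearity and finiteness are fine and only make explicit what the paper leaves implicit.
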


\begin{proof}
Let us consider the exact sequence \eqref{eq:JP_ex} for $Y$ instead of $X$.
By taking $\Tor_*^R(\ol{R}, -)$, we obtain an exact sequence
\[
\Tor_1^R(\ol{R}, \Z) \to \ol{R} \otimes_R \Jac(Y) \to \ol{R} \otimes_R \Pic(Y) \to \ol{R} \otimes_R \Z \to 0.
\]
By applying Lemma \ref{lem:Tor1}, we obtain the proposition.
\end{proof}

Let us study $\ol{R} \otimes_R \Pic(Y)$.

\begin{lem}\label{lem:olPic}
We have a short exact sequence
\[
0 \to\ol{R} \otimes_R \Div(Y) \overset{\ol{\cL_{Y}}}{\to} \ol{R} \otimes_R \Div(Y) \to \ol{R} \otimes_R \Pic(Y) \to 0,
\]
where $\ol{\cL_Y}$ denotes the homomorphism induced by $\cL_Y$.
\end{lem}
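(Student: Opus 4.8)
The plan is to read the sequence off the very definition of $\Pic(Y)$. By Definition \ref{defn:JP} we have a presentation $\Div(Y) \overset{\cL_Y}{\to} \Div(Y) \to \Pic(Y) \to 0$ by free $\Z[\Gamma]$-modules, and since $\ol{R} \otimes_R -$ is right exact, applying it produces an exact sequence $\ol{R} \otimes_R \Div(Y) \overset{\ol{\cL_Y}}{\to} \ol{R} \otimes_R \Div(Y) \to \ol{R} \otimes_R \Pic(Y) \to 0$. Thus the whole content of the lemma is the injectivity of $\ol{\cL_Y}$.

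To prove this I would first use that $\Div(Y)$ is a free $R = \Z[\Gamma]$-module, a basis being any set of representatives of the $\Gamma$-orbits in $V_Y$ (these orbits being free since $Y/X$ is Galois); consequently $\ol{R} \otimes_R \Div(Y)$ is canonically $\Div(Y)/N_{\Gamma}\Div(Y)$ and $\ol{\cL_Y}$ is the reduction of $\cL_Y$ modulo $N_{\Gamma}$. So injectivity is the assertion that $\cL_Y(x) \in N_{\Gamma}\Div(Y)$ forces $x \in N_{\Gamma}\Div(Y)$. Here I would invoke two inputs: first, $N_{\Gamma}\Div(Y) = \Div(Y)^{\Gamma}$, which holds because $\Div(Y)$ is $R$-free; second, Lemma \ref{lem:cpx_J} applied to $Y$, which gives $\Ker(\cL_Y) = \Image(\iota_Y) = \Z \cdot \iota_Y(1)$ with $\deg_Y(\iota_Y(1)) = \#V_Y \neq 0$. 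Given $\cL_Y(x) \in \Div(Y)^{\Gamma}$, for each $\gamma \in \Gamma$ one has $\cL_Y(\gamma x - x) = \gamma\cL_Y(x) - \cL_Y(x) = 0$, so $\gamma x - x = c_{\gamma}\iota_Y(1)$ for some $c_{\gamma} \in \Z$; applying the ($\Gamma$-invariant) map $\deg_Y$ gives $0 = c_{\gamma}\cdot\#V_Y$, whence $c_{\gamma} = 0$ and $\gamma x = x$. Therefore $x \in \Div(Y)^{\Gamma} = N_{\Gamma}\Div(Y)$, which is precisely injectivity of $\ol{\cL_Y}$.

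The only step that needs genuine argument is this injectivity: because $\ol{R} \otimes_R -$ is not exact, it is not formal that $\cL_Y$ stays injective after reduction modulo $N_{\Gamma}$, and it is exactly here that one must use the precise shape of $\Ker(\cL_Y)$ supplied by Lemma \ref{lem:cpx_J} — both that it is $\Gamma$-fixed (hence already contained in $N_{\Gamma}\Div(Y)$) and that its generator has nonzero degree. One could alternatively package the same point via the long exact sequence of $\Tor^R_*(\ol{R},-)$ attached to $0 \to \Image(\cL_Y) \to \Div(Y) \to \Pic(Y) \to 0$, but the direct averaging computation above seems the most transparent.
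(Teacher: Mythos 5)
Your argument is correct, but it is not the route the paper takes. The paper's proof is shorter and leans on what was just established: by Proposition \ref{prop:Jac_ex1} together with Theorem \ref{thm:Kirch}, the cokernel $\ol{R} \otimes_R \Pic(Y)$ is a \emph{finite} group, and then injectivity of $\ol{\cL_Y}$ follows at once because any endomorphism of a free $\Z$-module of finite rank with finite cokernel must be injective (rank count over $\Q$ plus torsion-freeness of the source). Your proof instead proves injectivity directly: you identify $\ol{R}\otimes_R\Div(Y)$ with $\Div(Y)/N_\Gamma\Div(Y)$, use $N_\Gamma\Div(Y)=\Div(Y)^\Gamma$ (valid since $\Div(Y)$ is $R$-free), and then feed in the precise description $\Ker(\cL_Y)=\Z\,\iota_Y(1)$ from Lemma \ref{lem:cpx_J} to run the averaging argument: $\cL_Y(\gamma x - x)=0$ forces $\gamma x - x = c_\gamma\,\iota_Y(1)$, and applying $\deg_Y$ (which is $\Gamma$-invariant and sends $\iota_Y(1)$ to $\#V_Y\neq 0$) kills $c_\gamma$.

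The trade-off: the paper's approach is essentially a one-liner once finiteness of the cokernel is on the table, but it is logically coupled to Proposition \ref{prop:Jac_ex1}. Your approach does not invoke Proposition \ref{prop:Jac_ex1} at all and is therefore somewhat more self-contained — indeed it would let one prove this lemma \emph{before} Proposition \ref{prop:Jac_ex1}, at the cost of a longer computation and the need to identify $\ol{R}\otimes_R\Div(Y)$ with a coinvariant module and $N_\Gamma\Div(Y)$ with the invariants. Both ultimately rest on Kirchhoff's theorem (yours via Lemma \ref{lem:cpx_J}, which uses Theorem \ref{thm:Kirch} in its proof; the paper's via the finiteness of the Jacobian), so neither is strictly ``more elementary,'' but yours makes the kernel-structure input explicit rather than hiding it in a finiteness count.
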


\begin{proof}
By the definition of $\Pic(Y)$, the cokernel of $\ol{\cL_Y}$ is isomorphic to $\ol{R} \otimes_R \Pic(Y)$, which is finite by Theorem \ref{thm:Kirch} and Proposition \ref{prop:Jac_ex1}.
Then the injectivity of $\ol{\cL_Y}$ follows since it is an endomorphism of a free $\Z$-module of finite rank.
\end{proof}

We write $\ol{Z_{Y/X}} \in \ol{R}$ for the image of $Z_{Y/X} \in R$.
By applying the shift theory (see \S \ref{ss:shift_defn}), we obtain the following.

\begin{thm}\label{thm:Fitt_fin1}
Suppose that $Y/X$ is an abelian covering.
Then we have
\[
\Fitt_{\ol{R}}(\ol{R} \otimes_R \Jac(Y))
= \ol{Z_{Y/X}} \Fitt^{[1]}_{\ol{R}}(\Z/ (\#\Gamma) \Z)
\]
as ideals of $\ol{R}$.
\end{thm}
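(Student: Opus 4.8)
The plan is to combine the two short exact sequences established so far with the ``shift'' operation on Fitting ideals. By Proposition \ref{prop:Jac_ex1} we have
\[
0 \to \ol{R} \otimes_R \Jac(Y) \to \ol{R} \otimes_R \Pic(Y) \to \Z/(\#\Gamma)\Z \to 0,
\]
and by Lemma \ref{lem:olPic} the module $\ol{R} \otimes_R \Pic(Y)$ is the cokernel of the injective endomorphism $\ol{\cL_Y}$ of the free $\ol{R}$-module $\ol{R} \otimes_R \Div(Y)$. The first observation I would record is that $\ol{R} \otimes_R \Pic(Y)$ is therefore a module of projective dimension $\leq 1$ over $\ol{R}$ with a \emph{square} presentation matrix, namely the presentation matrix of $\cL_Y$ reduced modulo $N_\Gamma$. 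Consequently its Fitting ideal is principal, generated by the determinant of that matrix, which by \eqref{eq:Z_YX} is exactly $\ol{Z_{Y/X}}$; equivalently this is the reduction mod $(N_\Gamma)$ of \eqref{eq:Z_YX2}. Since $\ol{Z_{Y/X}}$ is a non-zerodivisor in $\ol{R}$ (the cokernel being finite, so that $\ol{\cL_Y}$ stays injective after any base change detecting zerodivisors), this puts us in precisely the setting where the shift theory applies.

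Next I would invoke the key formula of the shift theory for a module $P$ of projective dimension $\leq 1$ sitting in a short exact sequence $0 \to M \to P \to N \to 0$: one has $\Fitt_{\ol R}(M) = \Fitt_{\ol R}(P) \cdot \Fitt^{[1]}_{\ol R}(N)$, where $\Fitt^{[1]}$ denotes the (first) shifted Fitting ideal reviewed in \S \ref{ss:shift_defn}. This is the multiplicativity/exactness property that the shift operator was designed to have; more precisely, the shifted Fitting ideal is insensitive to adding free summands and behaves multiplicatively along exact sequences in which the outer term has finite projective dimension. Applying this with $M = \ol R \otimes_R \Jac(Y)$, $P = \ol R \otimes_R \Pic(Y)$, and $N = \Z/(\#\Gamma)\Z$, and substituting $\Fitt_{\ol R}(P) = (\ol{Z_{Y/X}})$ from the previous paragraph, yields
\[
\Fitt_{\ol R}(\ol R \otimes_R \Jac(Y)) = \ol{Z_{Y/X}} \, \Fitt^{[1]}_{\ol R}(\Z/(\#\Gamma)\Z),
\]
which is the asserted identity. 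One should double-check that the hypothesis of the shift formula (that $\ol R \otimes_R \Pic(Y)$ genuinely has projective dimension $\leq 1$ over $\ol R$, not merely a square presentation) is met; this follows because an injective endomorphism of a free module is automatically a presentation of projective dimension $\leq 1$ of its cokernel.

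The step I expect to be the main obstacle — or at least the one requiring the most care — is verifying the precise hypotheses under which the multiplicativity of the shifted Fitting ideal holds, since $\ol R = \Z[\Gamma]/(N_\Gamma)$ need not be a regular (or even Gorenstein) ring and the modules involved, while finite, are torsion over a ring that is not a domain. In particular I would need to confirm that $\ol{\cL_Y}$ remains injective — equivalently that $\ol{Z_{Y/X}}$ is a non-zerodivisor in $\ol R$ — which uses finiteness of the relevant cokernels (Theorem \ref{thm:Kirch} together with Proposition \ref{prop:Jac_ex1} and Lemma \ref{lem:olPic}), and that the version of the shift theory stated in \S \ref{ss:shift_defn} is formulated generally enough (over arbitrary commutative rings, not only noetherian ones, as flagged in \S \ref{ss:comments}) to cover the present $\ol R$. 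Once these foundational points are in place, the derivation itself is the short three-line manipulation above. The actual content of the theorem is thus deferred to the algebraic computation of $\Fitt^{[1]}_{\ol R}(\Z/(\#\Gamma)\Z)$, which is the subject of the later Theorem \ref{thm:Fitt_fin}.
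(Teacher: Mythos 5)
Your argument follows the same route as the paper's: Lemma~\ref{lem:olPic} places $\ol{R}\otimes_R \Pic(Y)$ in $\cP_{\ol R}$ with $\Fitt_{\ol R}(\ol{R}\otimes_R \Pic(Y))=(\ol{Z_{Y/X}})$ via \eqref{eq:Z_YX2}, and then the definition of $\Fitt^{[1]}_{\ol R}(-)$ applied to the exact sequence of Proposition~\ref{prop:Jac_ex1} immediately yields the identity. The only minor imprecision is your concern about $\ol{R}$ being non-noetherian, which is moot here because $\Gamma$ is a finite group and hence $\ol{R}=\Z[\Gamma]/(N_{\Gamma})$ is noetherian; the non-noetherian generalization of the shift theory is needed only for the profinite setting of \S\ref{sec:Fitt_infin}.
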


\begin{proof}
As in \S \ref{ss:shift_defn}, let $\cP_{\ol{R}}$ be the category of finite $\ol{R}$-modules whose projective dimensions are $\leq 1$.
By Lemma \ref{lem:olPic}, we see that $\ol{R} \otimes_R \Pic(Y)$ is in $\cP_{\ol{R}}$.
Moreover, \eqref{eq:Z_YX2} implies
\[
\Fitt_{\ol{R}}(\ol{R} \otimes_R \Pic(Y)) 
= (\ol{Z_{Y/X}}).
\]
Therefore, the theorem follows from the definition of $\Fitt^{[1]}_{\ol{R}}(-)$ and Proposition \ref{prop:Jac_ex1}.
\end{proof}

We will determine $\Fitt^{[1]}_{\ol{R}}(\Z/ (\#\Gamma) \Z)$ in \S \ref{sec:fin_alg}, which would complete the proof of Theorem \ref{thm:main_fin}.

Before closing this section, we show a proposition that provides an interpretation of $\ol{R} \otimes_R \Jac(Y)$.
In the course of the proof, we reproduce the exact sequence in Proposition \ref{prop:Jac_ex1}.

\begin{prop}\label{prop:NormY}
We have a natural isomorphism
\[
\Jac(X) \simeq N_{\Gamma} \Jac(Y),
\]
so
\[
\ol{R} \otimes_R \Jac(Y) 
\simeq \Jac(Y)/\Jac(X).
\]
\end{prop}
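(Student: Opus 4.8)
The plan is to realize the isomorphism directly through the pullback of divisors along the covering map $\pi\colon Y\to X$. Define $\pi^{*}\colon\Div(X)\to\Div(Y)$ by $\pi^{*}([v])=\sum_{w\in\pi^{-1}(v)}[w]$. Choosing a vertex of $Y$ over each vertex of $X$ realizes $\Div(Y)$ as a free $\Z[\Gamma]$-module, and in these terms $\pi^{*}([v])=N_{\Gamma}\widetilde{v}$; hence $\pi^{*}$ is injective with image $\Div(Y)^{\Gamma}=N_{\Gamma}\Div(Y)$. Since $\pi$ is a local isomorphism, $\pi^{*}$ commutes with the Laplacians, $\pi^{*}\circ\cL_{X}=\cL_{Y}\circ\pi^{*}$, and satisfies $\deg_{Y}\circ\pi^{*}=(\#\Gamma)\deg_{X}$. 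Consequently $\pi^{*}$ restricts to an isomorphism $\Div^{0}(X)\overset{\sim}{\to}N_{\Gamma}\Div^{0}(Y)$ and carries $\cL_{X}\Div(X)$ into $\cL_{Y}\Div(Y)$, so it induces a natural homomorphism $\overline{\pi}^{*}\colon\Jac(X)\to\Jac(Y)$.

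Next I would compute the image and the kernel of $\overline{\pi}^{*}$. From $\pi^{*}(\Div^{0}(X))=N_{\Gamma}\Div^{0}(Y)$ one sees that the image of $\overline{\pi}^{*}$ is the image of $N_{\Gamma}\Div^{0}(Y)$ in $\Jac(Y)=\Div^{0}(Y)/\cL_{Y}\Div(Y)$, which is exactly $\{\,N_{\Gamma}\overline{D}:\overline{D}\in\Jac(Y)\,\}=N_{\Gamma}\Jac(Y)$. For injectivity, suppose $D\in\Div^{0}(X)$ satisfies $\pi^{*}(D)=\cL_{Y}(E)$ for some $E\in\Div(Y)$. Since $\pi^{*}(D)\in\Div(Y)^{\Gamma}$, the divisor $\cL_{Y}(E)$ is $\Gamma$-invariant, so $\cL_{Y}(\gamma E-E)=0$ for all $\gamma\in\Gamma$; by Lemma \ref{lem:cpx_J}, $\Ker(\cL_{Y})=\Image(\iota_{Y})$ is generated by $j:=\sum_{w\in V_{Y}}[w]$. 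Writing $\gamma E-E=n_{\gamma}j$ and using that $j$ is $\Gamma$-invariant, one checks that $\gamma\mapsto n_{\gamma}$ is a homomorphism $\Gamma\to(\Z,+)$, hence identically zero because $\Gamma$ is finite. Thus $E\in\Div(Y)^{\Gamma}=\Image(\pi^{*})$, say $E=\pi^{*}(E_{0})$; then $\pi^{*}(D)=\cL_{Y}\pi^{*}(E_{0})=\pi^{*}(\cL_{X}E_{0})$, and the injectivity of $\pi^{*}$ gives $D=\cL_{X}E_{0}$, so the class of $D$ in $\Jac(X)$ vanishes. This yields the natural isomorphism $\overline{\pi}^{*}\colon\Jac(X)\overset{\sim}{\to}N_{\Gamma}\Jac(Y)$.

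Finally, $\ol{R}\otimes_{R}\Jac(Y)=\Jac(Y)/N_{\Gamma}\Jac(Y)$ because $\ol{R}=R/(N_{\Gamma})$, so identifying $\Jac(X)$ with $N_{\Gamma}\Jac(Y)$ via $\overline{\pi}^{*}$ gives $\ol{R}\otimes_{R}\Jac(Y)\simeq\Jac(Y)/\Jac(X)$; along the way, tensoring \eqref{eq:JP_ex} for $Y$ with $\ol{R}$ over $R$ and using $\Tor_{1}^{R}(\ol{R},\Z)=0$ from Lemma \ref{lem:Tor1} reproduces the exact sequence of Proposition \ref{prop:Jac_ex1}. I expect the main obstacle to be the injectivity of $\overline{\pi}^{*}$: a class of $\Jac(X)$ could a priori become trivial only after pullback to the larger graph $Y$, and excluding this is precisely the descent argument above, which rests on the explicit kernel of $\cL_{Y}$ (Lemma \ref{lem:cpx_J}) together with the vanishing of $\Hom(\Gamma,\Z)$. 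The remaining verifications about $\pi^{*}$ — injectivity with image $N_{\Gamma}\Div(Y)$, compatibility with the Laplacians, and the degree identity — are routine consequences of $Y/X$ being a Galois covering, i.e.\ a local isomorphism of degree $\#\Gamma$.
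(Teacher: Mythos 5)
Your proof is correct. You take a genuinely more direct route than the paper. The paper assembles the three complexes from Lemma \ref{lem:cpx_J} (for $X$, for $Y$, and the $\ol R\otimes_R$ reduction) into a short exact sequence of complexes and then reads off the long exact sequence in homology, which simultaneously delivers the embedding $\Jac(X)\hookrightarrow\Jac(Y)$ with image $N_{\Gamma}\Jac(Y)$ and reproduces the four-term sequence of Proposition \ref{prop:Jac_ex1}. You instead build the candidate map $\overline{\pi}^{*}$ explicitly from $\pi^{*}$ and verify surjectivity onto $N_{\Gamma}\Jac(Y)$ and injectivity by hand. The one step the homological machinery hides, and which you have to argue separately, is injectivity: your descent argument, that $\cL_{Y}(E)$ being $\Gamma$-invariant forces $\gamma E-E\in\Ker(\cL_{Y})=\Z j$ and that $\gamma\mapsto n_{\gamma}$ is then a homomorphism $\Gamma\to\Z$, hence zero, is a clean replacement for what in the paper is absorbed into the exactness of the columns and the snake lemma. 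Both proofs ultimately rest on the same two facts, namely $\pi^{*}(\Div(X))=N_{\Gamma}\Div(Y)=\Div(Y)^{\Gamma}$ and the identification $\Ker(\cL_{Y})=\Image(\iota_{Y})$ from Lemma \ref{lem:cpx_J}; the paper's version is more economical because it gets Proposition \ref{prop:Jac_ex1} as a by-product, while yours is more elementary and makes the injectivity mechanism explicit.
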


\begin{proof}
We make use of Lemma \ref{lem:cpx_J} for both $X$ and $Y$.
Let us consider the following diagram involving those complexes.
\[
\xymatrix{
	0 \ar[r]
	& \Z \ar[r]^-{\iota_X} \ar@{=}[d]
	& \Div(X) \ar[r]^{\cL_X} \ar@{^(->}[d]
	& \Div(X) \ar[r]^-{\deg_X} \ar@{^(->}[d]
	& \Z \ar[r] \ar@{^(->}[d]^{\times (\# \Gamma)}
	& 0\\
	0 \ar[r]
	& \Z \ar[r]_-{\iota_Y} \ar[d]
	& \Div(Y) \ar[r]_{\cL_{Y}} \ar@{->>}[d]
	& \Div(Y) \ar[r]_-{\deg_{Y}} \ar@{->>}[d]
	& \Z \ar[r] \ar@{->>}[d]
	& 0\\
	& 0 \ar[r] 
	& \ol{R} \otimes_{R} \Div(Y) \ar[r]_{\ol{\cL_{Y}}}
	& \ol{R} \otimes_{R} \Div(Y) \ar[r]
	& \Z/(\# \Gamma) \Z \ar[r] 
	& 0.
}
\]
Here, the injective homomorphism $\Div(X) \hookrightarrow \Div(Y)$ is defined by sending $[v]$ to $\sum_{w \in \pi^{-1}(v)} [w]$, where $\pi: Y \to X$ is the covering map.
Then the image of this homomorphism is $N_{\Gamma} \Div(Y)$.
The vertical sequences are all exact and the lower horizontal sequence is the induced complex.

We regard this large diagram as a short exact sequence of complexes.
Recall that the kernel and the cokernel of $\ol{\cL_Y}$ are determined in Lemma \ref{lem:olPic}.
Then, taking the homology groups, we obtain an exact sequence
\begin{equation}\label{eq:31-1}
0 \to \Jac(X) \to \Jac(Y) \to \ol{R} \otimes_R \Pic(Y)  \to \Z/(\# \Gamma) \Z \to 0.
\end{equation}
The construction of the embedding of $\Jac(X)$ into $\Jac(Y)$ shows that the image coincides with $N_{\Gamma} \Jac(Y)$, as claimed.
Moreover, this reproduces the exact sequence in Proposition \ref{prop:Jac_ex1}.
\end{proof}

\section{Algebraic result}\label{sec:fin_alg}

Let $\Gamma$ be any finite abelian group.
Set $R = \Z[\Gamma]$ and $\ol{R} = \Z[\Gamma]/(N_{\Gamma})$.
The goal of this section is to determine the factor $\Fitt^{[1]}_{\ol{R}}(\Z/ (\#\Gamma) \Z)$ that appeared in Theorem \ref{thm:Fitt_fin1}.

\subsection{Statement}\label{ss:fin_alg}

First we state the result.
As in \S \ref{ss:main_result}, we fix a decomposition $\Gamma = \Delta_1 \times \cdots \times \Delta_s$ into cyclic groups.
For each $1 \leq l \leq s$, we choose a generator $\sigma_l$ of $\Delta_l$ and define $n_l = \# \Delta_l$ and elements $\tau_l, \nu_l, D_l \in \Z[\Delta_l]$ by the same formulas.
Clearly we have $\tau_l \nu_l = 0$.
A key property of $D_l$ is 
\begin{equation}\label{eq:D_key}
\tau_l D_l = n_l - \nu_l,
\end{equation}
which can be proved by a direct computation.
This property is also important in the theory of Euler systems.

The result is the following.

\begin{thm}\label{thm:Fitt_fin}
We have
\begin{align}
\Fitt^{[1]}_{\ol{R}}(\Z/ (\#\Gamma) \Z)
= & \sum_{l=1}^s \left( \nu_1 \cdots \nu_{l-1} \cdot \frac{D_l}{n_l} \cdot \nu_{l+1} \cdots \nu_s \right) \\
& + \left(\nu_1^{e_1} \cdots \nu_s^{e_s} \tau_1^{f_1} \cdots \tau_s^{f_s} \, \middle| \, \begin{matrix} 0 \leq e_l \leq 1, f_l \geq 0, \\ e_1 + \cdots + e_s + f_1 + \cdots + f_s = s- 2 \end{matrix} \right).
\end{align}
\end{thm}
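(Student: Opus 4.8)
The plan is to compute $\Fitt^{[1]}_{\ol{R}}(\Z/(\#\Gamma)\Z)$ directly from the definition of the shifted Fitting ideal, by producing an explicit free resolution of $\Z/(\#\Gamma)\Z$ over $\ol{R}$ of the appropriate shape and then reading off the (shifted) Fitting ideal from a presentation matrix. First I would build a finite free resolution of $\Z$ over $R = \Z[\Gamma]$ using the standard ``Koszul-type'' resolution attached to the decomposition $\Gamma = \Delta_1 \times \cdots \times \Delta_s$: each $\Delta_l$ contributes a periodic $2$-term piece $\cdots \to \Z[\Delta_l] \xrightarrow{\nu_l} \Z[\Delta_l] \xrightarrow{\tau_l} \Z[\Delta_l] \xrightarrow{\nu_l} \cdots$, and tensoring these together over $\Z$ gives a (doubly infinite, but we only need finitely many terms) complex of free $R$-modules resolving $\Z$. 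Reducing modulo $N_\Gamma$ and using the relations $\tau_l\nu_l = 0$, together with the behavior of $N_\Gamma = \nu_1\cdots\nu_s$ and the key identity \eqref{eq:D_key} $\tau_l D_l = n_l - \nu_l$, one truncates this to obtain a projective resolution of $\Z/(\#\Gamma)\Z$ over $\ol{R}$ exhibiting it as an object whose class in the relevant category allows one to identify its shifted Fitting ideal with a Fitting ideal of an explicit cokernel one step further along. The Kolyvagin-type operators $D_l$ enter precisely here: the map $\Z \to \Z$ by $\times(\#\Gamma)$ lifts, after reduction mod $N_\Gamma$, to a map expressible via the $D_l/n_l$, since $\#\Gamma = n_1\cdots n_s$ and $N_\Gamma$ is killed.

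Concretely, I would first do the case $s=1$ (where $\ol{R} = \Z[\Delta_1]/(\nu_1)$ and $\Z/(\#\Gamma)\Z = \Z/n_1\Z$) and the case $s=2$ by hand to pin down the shape of the answer and to see how the two families of generators — the ``derivative'' terms $\nu_1\cdots\nu_{l-1}(D_l/n_l)\nu_{l+1}\cdots\nu_s$ and the ``monomial'' terms $\nu_1^{e_1}\cdots\nu_s^{e_s}\tau_1^{f_1}\cdots\tau_s^{f_s}$ of total degree $s-2$ — arise, respectively, from the lift of the multiplication-by-$\#\Gamma$ map and from the Koszul relations among the $\tau_l$ and $\nu_l$. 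Then for general $s$ I would induct, say on $s$, writing $\Gamma = \Gamma' \times \Delta_s$ with $\Gamma' = \Delta_1 \times \cdots \times \Delta_{s-1}$, and relate the resolution over $\ol{R} = \Z[\Gamma]/(N_\Gamma)$ to that over $\Z[\Gamma']$, tracking how the degree-$(s-2)$ monomial condition and the derivative sum transform under adding one more cyclic factor. The combinatorial bookkeeping of which monomials $\nu_1^{e_1}\cdots\tau_s^{f_s}$ survive — this is where the stated graph-theoretic/combinatorial method is invoked — is what controls the inductive step, since many candidate generators turn out to be redundant modulo $N_\Gamma$ and modulo the already-present generators.

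The main obstacle will be the second step: proving that the explicitly written right-hand side is \emph{exactly} the shifted Fitting ideal, i.e. both containments. One direction (that the listed elements lie in $\Fitt^{[1]}_{\ol{R}}(\Z/(\#\Gamma)\Z)$) should follow by exhibiting suitable presentation matrices and computing minors. The reverse containment — that no further elements appear — is the delicate part, because $\ol{R}$ is far from a product of local rings and one cannot simply argue componentwise; one must show that every relevant minor of an arbitrary presentation matrix already lies in the stated ideal, which is where the combinatorial analysis of monomials in the $\tau_l,\nu_l$ (and the interplay with the $D_l$ via \eqref{eq:D_key}) does the real work. I expect the cleanest route is to fix one specific ``good'' presentation coming from the Koszul-type resolution, reduce the computation of $\Fitt^{[1]}$ to a determinant/minor computation for that presentation using the well-definedness of the shift (from \S\ref{ss:shift_defn}), and then carry out the minor computation by the combinatorial method, handling the redundancies among the degree-$(s-2)$ monomials as the technical heart of the argument.
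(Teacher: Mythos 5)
Your high-level outline is pointing in the right direction — the Koszul-type resolution built from the periodic two-term pieces $[\Z[\Delta_l] \xrightarrow{\nu_l} \Z[\Delta_l] \xrightarrow{\tau_l} \Z[\Delta_l]]$, the identity $\tau_l D_l = n_l - \nu_l$ as the source of the $D_l/n_l$ terms, and ``combinatorics of monomials in $\tau_l,\nu_l$'' are all the right ingredients. But the proposal has a real gap where the computation actually has to happen, and the inductive strategy you sketch is not the mechanism the paper uses (nor is it clear it would close without essentially rediscovering the paper's key lemma along the way).

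Two concrete points. First, on the reduction step: the paper does not truncate a resolution over $\ol R$. Instead, by a short-exact-sequence argument (Lemma~\ref{lem:Fitt1}) it converts the shifted Fitting ideal into an \emph{unshifted} one over the full ring $R=\Z[\Gamma]$: $\Fitt^{[1]}_{\ol R}(\Z/(\#\Gamma)\Z) = (\#\Gamma)^{-1}\Fitt_{\ol R}(I/R(\#\Gamma - N_\Gamma))$, where $I$ is the augmentation ideal. This sidesteps the need to work with a non-free resolution over $\ol R$. You then need a presentation of $I/R(\#\Gamma - N_\Gamma)$ over $R$: the Koszul $d_2$ gives the matrix $M_s(\nu_\bullet,\tau_\bullet)$ (diagonal $\nu$'s stacked on $N_s(\tau_\bullet)$), and Lemma~\ref{lem:sum_bt} ($\sum_l b_l\tau_l = \#\Gamma - N_\Gamma$ with $b_l = \nu_1\cdots\nu_{l-1}D_l n_{l+1}\cdots n_s$) produces the extra row $(b_1,\dots,b_s)$.

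Second, and more importantly, the technical heart you gesture at but do not supply is Proposition~\ref{prop:Fitt_p2}: the computation, over the polynomial ring $\Z[T_1,\dots,T_s,B_1,\dots,B_s]$, of
\[
\Fitt\!\left(\begin{pmatrix} N_s(T_1,\dots,T_s)\\ \begin{matrix} B_1 & \cdots & B_s \end{matrix} \end{pmatrix}\right)
= \Bigl(\sum_{l=1}^s B_l T_l\Bigr)(T_1,\dots,T_s)^{s-2}
\quad (s\ge 2).
\]
The graph-theoretic input is precisely here, and it is not the loose ``bookkeeping of surviving monomials'' you describe: the $s\times s$ minors of the stacked matrix are indexed by choices of $s$ rows out of $\binom{s}{2}+1$; identifying the row $x_lx_{l'}$ with an edge between vertices $x_l$ and $x_{l'}$ sets up a bijection with graph structures on $\{x_1,\dots,x_s\}$, the minor vanishes unless the graph is a tree (Claim~\ref{claim2}), and when it is a tree the minor equals $\pm(\sum_l B_lT_l)\,T_1^{\deg(x_1)-1}\cdots T_s^{\deg(x_s)-1}$ (Claim~\ref{claim3}, proved by rooting the tree and making the cofactor lower triangular). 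The degree-sequence count then explains exactly why the monomial generators of total degree $s-2$ appear, with no redundancy analysis needed. Your proposed induction on $s$ would need to reprove some version of this; without it, the reverse containment you flag as ``the delicate part'' remains unaddressed. After Proposition~\ref{prop:Fitt_p2} is in hand, the passage from the polynomial ring to $R$ is a direct expansion (equation~\eqref{eq:tM0}) using $\nu_l\tau_l=0$, not an induction.
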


Then clearly Theorem \ref{thm:main_fin} follows from Theorems \ref{thm:Fitt_fin1} and \ref{thm:Fitt_fin}.
The rest of this section is devoted to the proof of Theorem \ref{thm:Fitt_fin}.

First we describe $\Fitt^{[1]}_{\ol{R}}(\Z/ (\#\Gamma) \Z)$ by using an (unshifted) Fitting ideal.
Let 
\[
I = \Ker(\Z[\Gamma] \to \Z) \subset R
\]
 be the augmentation ideal, which we can regard as an $\ol{R}$-module.

\begin{lem}\label{lem:Fitt1}
We have
\[
\Fitt^{[1]}_{\ol{R}}(\Z/ (\#\Gamma) \Z) 
 = (\# \Gamma)^{-1} \Fitt_{\ol{R}}(I/R(\# \Gamma - N_{\Gamma})).
\]
\end{lem}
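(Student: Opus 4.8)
The goal is to identify the shifted Fitting ideal $\Fitt^{[1]}_{\ol{R}}(\Z/(\#\Gamma)\Z)$ with $(\#\Gamma)^{-1}\Fitt_{\ol{R}}(I/R(\#\Gamma-N_\Gamma))$. The natural strategy is to produce a short exact sequence in $\cP_{\ol{R}}$ that realizes $\Z/(\#\Gamma)\Z$ as a quotient (or kernel) whose other two terms have known Fitting ideals, so that the defining property of the shift operator applies. The first step is to recall from \S\ref{ss:shift_defn} the precise behavior of $\Fitt^{[1]}_{\ol{R}}$: for a short exact sequence $0 \to P \to Q \to M \to 0$ with $P, Q \in \cP_{\ol{R}}$ and $M$ finite, one has $\Fitt_{\ol{R}}(P) = \Fitt^{[1]}_{\ol{R}}(M)\cdot\Fitt_{\ol{R}}(Q)$, or equivalently $\Fitt^{[1]}_{\ol{R}}(M) = \Fitt_{\ol{R}}(P)\Fitt_{\ol{R}}(Q)^{-1}$ when $\Fitt_{\ol{R}}(Q)$ is invertible.

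**Key steps.** First I would fix a presentation of $\Z/(\#\Gamma)\Z$ as an $\ol{R}$-module. The augmentation sequence $0 \to I \to R \to \Z \to 0$, after base change, should be massaged so that $\Z/(\#\Gamma)\Z$ appears with $I$-like kernel. Concretely, consider the $\ol{R}$-module $\ol{R}$ together with the map $\ol{R} \to \Z/(\#\Gamma)\Z$ induced by augmentation (well-defined since $N_\Gamma \mapsto \#\Gamma \equiv 0$); its kernel is $I/(I \cap R N_\Gamma) = I/R(\#\Gamma - N_\Gamma)$, using that $I \cap RN_\Gamma$ is generated by $(\sigma - 1)N_\Gamma$-type elements — here one checks $I \cap RN_\Gamma = \Z(\#\Gamma - N_\Gamma)$ or rather that the relevant submodule of $I$ is $R(\#\Gamma - N_\Gamma)$ modulo $N_\Gamma$. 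Wait — more carefully, inside $\ol{R} = R/(N_\Gamma)$, the image of $I$ is $I/(I\cap RN_\Gamma)$, and one verifies $I \cap RN_\Gamma = (\#\Gamma - N_\Gamma)\Z \subseteq$ ... actually since $N_\Gamma \cdot (\gamma - 1) = 0$ in $R$ is false; rather $\gamma N_\Gamma = N_\Gamma$, so $(\gamma-1)N_\Gamma = 0$, giving $I \cdot N_\Gamma = 0$, hence $RN_\Gamma \cap I$: an element $rN_\Gamma$ lies in $I$ iff $\aug(r)\#\Gamma = 0$ iff $\aug(r) = 0$ iff $r \in I$, but then $rN_\Gamma = 0$. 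So $RN_\Gamma \cap I = 0$ and the image of $I$ in $\ol{R}$ is $I$ itself. That forces me instead to use the correct sequence: the point of the $(\#\Gamma - N_\Gamma)$ is that $\ol{R}\otimes_R \Z \simeq \Z/(\#\Gamma)\Z$ (Lemma \ref{lem:Tor1}), and a free resolution of $\Z$ over $R$ base-changed to $\ol{R}$ gives a presentation of $\Z/(\#\Gamma)\Z$ with first syzygy $I/R N_\Gamma$ — but one must correct for the $\Tor_1$ term. So the genuine short exact sequence to use is $0 \to I/R(\#\Gamma - N_\Gamma) \to \ol{R}^{?} \to \Z/(\#\Gamma)\Z \to 0$ obtained by splicing the augmentation sequence with the presentation of $\Tor_1^R(\ol{R},\Z) = 0$; the element $\#\Gamma - N_\Gamma$ appears precisely as the image of the connecting map. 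I would make this splicing explicit and check that $I/R(\#\Gamma - N_\Gamma) \in \cP_{\ol{R}}$.

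**The main obstacle.** The delicate point is bookkeeping the interaction between the augmentation ideal $I$, the norm element $N_\Gamma$, and the integer $\#\Gamma$ inside $\ol{R}$ — in particular producing the exact module $I/R(\#\Gamma - N_\Gamma)$ rather than $I/RN_\Gamma$ or $I$ itself, and verifying it has projective dimension $\le 1$ over $\ol{R}$ so the shift operator is even defined on it. Once the short exact sequence $0 \to I/R(\#\Gamma - N_\Gamma) \to (\text{free or pd} \le 1) \to \Z/(\#\Gamma)\Z \to 0$ is in hand, the formula follows: the middle term is (after one more reduction) $\ol{R}$ itself, whose Fitting ideal is the unit ideal — no, the middle term will be $\ol{R}$ with the quotient map having kernel of the right size, so $\Fitt_{\ol{R}}(\text{middle}) = \ol{R}$, and hence $\Fitt^{[1]}_{\ol{R}}(\Z/(\#\Gamma)\Z) = \Fitt_{\ol{R}}(I/R(\#\Gamma - N_\Gamma))$ divided by... the $(\#\Gamma)^{-1}$ factor must come from a rank/twist discrepancy, namely that the correct middle term is not $\ol{R}$ but rather has Fitting ideal $(\#\Gamma)$, which is exactly the content of the second isomorphism in Lemma \ref{lem:Tor1}. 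So concretely: $\Fitt^{[1]}_{\ol R}(\Z/(\#\Gamma)\Z) \cdot \Fitt_{\ol R}(\ol R\otimes_R\Z) = \Fitt_{\ol R}(\text{kernel term})$, and since $\Fitt_{\ol R}(\ol R \otimes_R \Z) = \Fitt_{\ol R}(\Z/(\#\Gamma)\Z)$ is... I would instead directly use that in $\cP_{\ol R}$ the identity $\Fitt_{\ol R}^{[1]}(M) = \Fitt_{\ol R}(\Omega M)$ for a first syzygy $\Omega M$, compute $\Omega(\Z/(\#\Gamma)\Z)$ from a minimal-ish presentation, identify it with $(\#\Gamma)^{-1}(I/R(\#\Gamma - N_\Gamma))$ as a \emph{fractional} module (this is where the $(\#\Gamma)^{-1}$ scaling enters, as $I/R(\#\Gamma - N_\Gamma)$ sits naturally not inside $\ol R$ but inside $(\#\Gamma)^{-1}\ol R$ after the relevant identification), and conclude by the behavior of Fitting ideals under such rescaling. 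Getting this normalization exactly right — the factor $(\#\Gamma)^{-1}$ — will be the crux; everything else is formal manipulation with the shift theory of \S\ref{ss:shift_defn} and the exact sequences already assembled in \S\ref{sec:Fin_arith}.
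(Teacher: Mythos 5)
There is a genuine gap: you never produce the short exact sequence that actually makes the argument work, and the final appeal to ``fractional modules'' is not a valid step in the shift theory. Let me trace the problems. You correctly establish that $I\cap RN_\Gamma = 0$, so the image of $I$ in $\ol R$ is isomorphic to $I$ and one gets the exact sequence $0 \to I \to \ol R \to \Z/(\#\Gamma)\Z \to 0$. But then you need a middle term that is a \emph{finite torsion} module in $\cP_{\ol R}$; the free module $\ol R$ itself has $\Fitt_{\ol R}(\ol R) = 0$ (not the unit ideal), so it is unusable, and the kernel of $\ol R \to \Z/(\#\Gamma)\Z$ is $(I + \#\Gamma\ol R)$, not $I/R(\#\Gamma - N_\Gamma)$ — your first claim that it equals $I/R(\#\Gamma-N_\Gamma)$ is simply false. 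You do eventually sense that the correct middle term has Fitting ideal $(\#\Gamma)$, but you never identify it, and you never write down a valid exact sequence. The closing proposal — use $\Fitt^{[1]}_{\ol R}(M) = \Fitt_{\ol R}(\Omega M)$ and rescale a syzygy by $(\#\Gamma)^{-1}$ as a ``fractional module'' — misstates the definition (one must divide by $\Fitt_{\ol R}(P)$, which is not trivial here) and invokes an operation that is not part of the framework.

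The step you are missing is elementary once phrased correctly: the element $\#\Gamma - N_\Gamma$ lies in $I$ and its image in $\ol R$ is $\#\Gamma$. Reducing the sequence $0 \to I \to \ol R \to \Z/(\#\Gamma)\Z \to 0$ modulo the $\ol R$-submodule generated by this element (which is $\#\Gamma\ol R$ on the right and $R(\#\Gamma - N_\Gamma)$ on the left) produces
\[
0 \to I/R(\#\Gamma - N_\Gamma) \to \ol R/(\#\Gamma)\ol R \to \Z/(\#\Gamma)\Z \to 0.
\]
Here $\ol R/(\#\Gamma)\ol R$ lies in $\cP_{\ol R}$ and has Fitting ideal $(\#\Gamma)$; the defining formula $\Fitt^{[1]}_{\ol R}(M) = \Fitt_{\ol R}(P)^{-1}\Fitt_{\ol R}(N)$ then gives the lemma directly. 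The factor $(\#\Gamma)^{-1}$ arises simply as $\Fitt_{\ol R}(\ol R/(\#\Gamma)\ol R)^{-1}$, not from any twist or fractional rescaling.
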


\begin{proof}
By the tautological exact sequence $0 \to I \to R \to \Z \to 0$, we have an exact sequence $0 \to I \to \ol{R} \to \Z/(\# \Gamma)\Z \to 0$.
Since $\# \Gamma - N_{\Gamma}$ is in $I$ and its image to $\ol{R}$ is $\# \Gamma$, we then obtain an exact sequence
\[
0 \to I/R (\# \Gamma - N_{\Gamma}) \to \ol{R}/ (\# \Gamma) \ol{R} \to \Z/(\# \Gamma)\Z \to 0.
\]
By the definition of $\Fitt_{\ol{R}}^{[1]}(-)$, this implies the lemma.
\end{proof}

By Lemma \ref{lem:Fitt1}, the proof of Theorem \ref{thm:Fitt_fin} is reduced to the following.

\begin{prop}\label{prop:Fitt_fin2}
We have
\begin{align}
& \Fitt_{R}(I/R(\# \Gamma - N_{\Gamma}))\\
& \qquad = 
	(N_{\Gamma})
	+ (\# \Gamma) \cdot \sum_{l=1}^s \left( \nu_1 \cdots \nu_{l-1} \cdot \frac{D_l}{n_l} \cdot \nu_{l+1} \cdots \nu_s \right) \\
& \qquad \qquad + (\# \Gamma) \cdot \left( \nu_1^{e_1} \cdots \nu_s^{e_s} \tau_1^{f_1} \cdots \tau_s^{f_s} \, \middle| \, \begin{matrix} 0 \leq e_l \leq 1, f_l \geq 0, \\ e_1 + \cdots + e_s + f_1 + \cdots + f_s = s- 2 \end{matrix} \right).
\end{align}
\end{prop}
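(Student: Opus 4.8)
The plan is to compute $\Fitt_R(I/R(\#\Gamma - N_\Gamma))$ by writing down an explicit finite presentation of this module over $R = \Z[\Gamma]$ and then manipulating the resulting matrix of relations. First I would exploit the chosen decomposition $\Gamma = \Delta_1 \times \cdots \times \Delta_s$ to present $R$ itself as $\Z[T_1,\dots,T_s]/(\,(1+T_l)^{n_l}-1 : 1\le l\le s\,)$ via $T_l \mapsto \tau_l = \sigma_l - 1$; in these coordinates the augmentation ideal $I$ is generated by $\tau_1,\dots,\tau_s$. The quotient by $R(\#\Gamma - N_\Gamma)$ adds one further relation. So $M := I/R(\#\Gamma - N_\Gamma)$ has an evident generating set of size $s$ (the images of $\tau_1,\dots,\tau_s$), and I would assemble the relation module: the ``obvious'' relations among the $\tau_l$ coming from the identities $\tau_l \nu_l = 0$ together with $(1+T_l)^{n_l} - 1 = \tau_l \nu_l$, the Koszul-type syzygies $\tau_i \cdot \tau_j - \tau_j \cdot \tau_i = 0$ (which are trivial over a commutative ring but matter when we express a relation as an $R$-linear combination writing one $\tau_l$ in terms of the others), and finally the single relation expressing $\#\Gamma - N_\Gamma = \nu_1\cdots\nu_s \cdot(\text{something}) $ — more precisely using $\#\Gamma - N_\Gamma \in I$ one writes it as $\sum_l c_l \tau_l$ for suitable $c_l \in R$, using repeatedly $n_l - \nu_l = \tau_l D_l$ from \eqref{eq:D_key}. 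This last step is where the Kolyvagin derivative $D_l$ enters: iterating $N_\Gamma = \nu_1\cdots\nu_s$ and peeling off one factor at a time via $\nu_l = n_l - \tau_l D_l$ produces a telescoping expression $\#\Gamma - N_\Gamma = \sum_{l=1}^s n_1\cdots n_{l-1}\,\nu_{l+1}\cdots\nu_s\,\tau_l D_l$ (up to reindexing), which exhibits the coefficient of $\tau_l$ and simultaneously explains the appearance of the first sum $\sum_l \nu_1\cdots\nu_{l-1}(D_l/n_l)\nu_{l+1}\cdots\nu_s$, scaled by $\#\Gamma$, in the answer.

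Having fixed the $s \times (\text{relations})$ presentation matrix, the Fitting ideal $\Fitt_R(M)$ is by definition generated by the $s\times s$ minors. I would organize the relation matrix so that it visibly contains, as columns: the $s$ ``norm'' relations with $\nu_l$ on the diagonal and zeros off-diagonal (giving the minor $\nu_1\cdots\nu_s = N_\Gamma$, accounting for the $(N_\Gamma)$ summand), mixed columns swapping some $\nu_l$ for a $\tau_l$-expression, and the single relation above. Expanding the $s\times s$ minors by choosing, for each index $l$, either the $\nu_l$-column or a $\tau$-type column, one obtains exactly products of the shape $\nu_1^{e_1}\cdots\nu_s^{e_s}\tau_1^{f_1}\cdots\tau_s^{f_s}$ times combinatorial factors, and keeping track of how many columns come from the ``special'' relation (at most one, since there is only one such column) forces the degree constraint $e_1+\cdots+e_s+f_1+\cdots+f_s = s-2$ (two indices are ``used up'': one by the special column, and the bookkeeping of the telescoping relation costs one more). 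The factor $\#\Gamma$ multiplying all but the $(N_\Gamma)$ term arises because the special relation carries $\#\Gamma$ as its constant while also carrying the $D_l$-terms, so any minor using it is divisible by $\#\Gamma$. I would prove the two inclusions separately: ``$\subseteq$'' by checking that every $s\times s$ minor of the chosen presentation lies in the claimed ideal — a finite, if tedious, Laplace-expansion argument using $\tau_l\nu_l = 0$ to kill cross terms; ``$\supseteq$'' by exhibiting each listed generator as a specific minor (or $R$-combination of minors), which amounts to choosing the column pattern that realizes the prescribed $(e_l,f_l)$.

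The main obstacle is the ``$\subseteq$'' direction: controlling \emph{all} $s\times s$ minors of a presentation matrix whose size grows with $s$, and showing each collapses into the short list on the right-hand side. Here I anticipate needing the combinatorial/graph-theoretic device alluded to in the paper's introduction — encoding monomials $\nu_1^{e_1}\cdots\tau_s^{f_s}$ as configurations (e.g. subsets of $\{1,\dots,s\}$ decorated by the $\tau$-exponents) and interpreting minor expansions as sums over certain subgraphs or matchings, so that the relations $\tau_l\nu_l=0$ and $\tau_l D_l = n_l-\nu_l$ become local moves. A secondary subtlety is that $R$ is not a domain (it has zero-divisors $\tau_l,\nu_l$), so I must argue with honest ideal membership rather than divisibility, and be careful that ``$D_l/n_l$'' only makes sense inside the ambient ring $\Q[\Gamma]$ — the products $\nu_1\cdots(D_l/n_l)\cdots\nu_s$ nonetheless lie in $\Z[\Gamma]$, a point I would verify at the outset. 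Once the minor analysis is complete, Theorem \ref{thm:Fitt_fin} follows immediately from Lemma \ref{lem:Fitt1} by dividing by $\#\Gamma$ and discarding the $(N_\Gamma)$ term (which maps to $0$ in $\ol R$).
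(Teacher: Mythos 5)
Your strategy is essentially the paper's: present $I$ over $R$ with generators $\tau_1,\dots,\tau_s$ and relations $\nu_l x_l$ ($1\le l\le s$) and $-\tau_{l'} x_l + \tau_l x_{l'}$ ($1\le l<l'\le s$), rewrite $\#\Gamma - N_\Gamma$ as $\sum_l b_l\tau_l$ via the telescoping identity built from \eqref{eq:D_key}, and append the row $(b_1,\dots,b_s)$ to obtain a presentation matrix for $I/R(\#\Gamma - N_\Gamma)$. Your telescoping $\sum_l n_1\cdots n_{l-1}\,\nu_{l+1}\cdots\nu_s\,\tau_l D_l$ is a valid variant of the paper's $b_l = \nu_1\cdots\nu_{l-1}\,D_l\,n_{l+1}\cdots n_s$, since both collapse to $n_1\cdots n_s - \nu_1\cdots\nu_s$, and you are right that the Fitting ideal is then computed by peeling off $\nu_l$-rows with $\nu_l\tau_l=0$, which is the step \eqref{eq:tM0} in the paper.

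What you leave open is the technical heart, and the heuristic you offer in its place does not hold up. After pulling out subsets of $\nu$-rows one reduces to $\Fitt$ of the matrix obtained by adjoining the row $(B_1,\dots,B_s)$ to $N_s(T_1,\dots,T_s)$, over the free polynomial ring $\Z[T_1,\dots,T_s,B_1,\dots,B_s]$; this is Proposition~\ref{prop:Fitt_p2}. The correct combinatorial device there is not ``matchings'' or ``decorated subsets'': each choice of $s$ rows determines a simple graph on the vertices $x_1,\dots,x_s$, a nonzero minor forces that graph to be a \emph{tree} (Claim~\ref{claim2}, because any cycle yields a singular submatrix), and for a tree the minor equals $\pm\bigl(\sum_l B_l T_l\bigr)T_1^{\deg(x_1)-1}\cdots T_s^{\deg(x_s)-1}$, proved by reordering by depth along the rooted tree so each cofactor matrix is triangular (Claim~\ref{claim3}). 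In particular the exponent budget $\sum_l(e_l+f_l)=s-2$ is \emph{not} ``two indices used up, one by the special column and one by bookkeeping''; it is the handshake identity $\sum_l(\deg(x_l)-1)=2(s-1)-s=s-2$ for a tree on $s$ vertices with $s-1$ edges, combined with the realizability (by induction) of every degree sequence summing to $2(s-1)$ with all entries $\geq 1$. Without this tree analysis your $\subseteq$ step --- controlling all $s\times s$ minors --- stalls at exactly the point you flag.
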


The proof of this proposition will be given in \S \ref{ss:thm2}.
Before that, we establish a key technical proposition in \S \ref{ss:resol}.

\subsection{Key proposition}\label{ss:resol}

Let $s \geq 0$ be an integer and let $R_s = \Z[T_1, \dots, T_s]$ be the polynomial ring in $s$ indeterminates.
Let us construct a free resolution of $\Z$ over $R_s$ by using tensor products of complexes.
This, or rather the more complicated construction in \S \ref{ss:thm2} below, is inspired by \cite{GK15} and is also used in \cite{Kata_05}, \cite{AK21}, etc.

For each $1 \leq l \leq s$, we have an exact sequence
\[
0 \to \Z[T_l] x_l \overset{T_l}{\to} \Z[T_l] \to \Z \to 0.
\]
Here, $x_l$ denotes an indeterminate, so $\Z[T_l] x_l$ is a free $\Z[T_l]$-module of rank one, and the map labeled $T_l$ sends $x_l$ to $T_l$.
The map $\Z[T_l] \to \Z$ sends $T_l$ to $0$.

Observe that $R_s$ is the tensor product of $\Z[T_l]$ over $\Z$ for $1 \leq l \leq s$.
Then by taking the tensor product over $\Z$ of the complexes $[\Z[T_l] x_l \overset{T_l}{\to} \Z[T_l]]$ for $1 \leq l \leq s$, we obtain an exact sequence
\[
\bigoplus_{1 \leq l < l' \leq s} R_s x_l x_{l'} \overset{d_2}{\to} \bigoplus_{1 \leq l \leq s} R_s x_l \overset{d_1}{\to} R_s \overset{d_0}{\to} \Z \to 0,
\]
where $d_0$ sends $T_l$ to $0$ for any $l$, $d_1$ is determined by 
\[
d_1(x_l) = T_l
\]
for $1 \leq l \leq s$, and $d_2$ is determined by
\[
d_2(x_l x_{l'}) = - T_{l'} x_l + T_l x_{l'}
\]
for $1 \leq l < l' \leq s$ (there are other choices of signs, but that does not matter).

We write $N_s(T_1, \dots, T_s)$ for the presentation matrix of $d_2$ with respect to the basis $\{x_l x_{l'} \mid 1 \leq l < l' \leq s \}$ and $x_1, \dots, x_s$.
Note that we do not determine the order of the rows because it does not matter at all.
Indeed, in the following we will sometimes choose various orders of bases that are suitable for computation.

\begin{eg}
When $s = 0$ (resp.~$s = 1$), by definition the source of $d_2$ is the zero module, so $d_2 = 0$ and $N_0()$ (resp.~$N_1(T_1)$) is the empty matrix.
When $s = 2$, we have 
\[
N_2(T_1, T_2) = \begin{pmatrix} -T_2 & T_1 \end{pmatrix}.
\]
When $s = 3$, we have
\[
N_3(T_1, T_2, T_3) = 
\begin{pmatrix}
& - T_3 & T_2\\
- T_3 & & T_1\\
- T_2 & T_1 &
\end{pmatrix}.
\]
Here, we use the order $x_2x_3, x_1x_3, x_1x_2$ for the basis.
When $s = 4$, we have
\[
N_4(T_1, T_2, T_3, T_4) = 
\begin{pmatrix}
-T_2 & T_1 &  & \\
- T_3 & & T_1 & \\
- T_4 & & & T_1\\
& - T_3 & T_2 & \\
& -T_4 & & T_2\\
& & -T_4 & T_3
\end{pmatrix}.
\]
Here, we use the order $x_1 x_2, x_1x_3, x_1x_4, x_2x_3, x_2x_4, x_3x_4$ for the basis.
\end{eg}

For a matrix $H$, which is identified with a homomorphism between free modules, we have the associated ideal $\Fitt(H)$ defined as in Definition \ref{defn:Fitt1}.

The following formula plays a key role in the proof of Proposition \ref{prop:Fitt_fin2}.
It is the most technical result in this paper.

\begin{prop}\label{prop:Fitt_p2}
We have
\[
\Fitt(\begin{pmatrix} N_s(T_1, \dots, T_s) \\ \begin{matrix} B_1 & \cdots & B_s \end{matrix} \end{pmatrix})
= \begin{cases}
(1) & (s = 0)\\
(B_1) & (s = 1)\\
\left( \sum_{l=1}^s B_l T_l \right)(T_1, \dots, T_s)^{s - 2} & (s \geq 2).\\
\end{cases}
\]
Here, $B_1, \dots, B_s$ are indeterminates that have no relation with $T_1, \dots, T_s$, i.e., we work over the polynomial ring $\Z[T_1, \dots, T_s, B_1, \dots, B_s]$.
\end{prop}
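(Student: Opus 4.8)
The plan is to compute directly the ideal of maximal ($s\times s$) minors of the matrix in the statement, which is $\Fitt$ of that matrix; the cases $s\le 1$ are immediate, so assume $s\ge 2$ and work over $R=\Z[T_1,\dots,T_s,B_1,\dots,B_s]$. The starting point is the combinatorial reading of $N_s$ as a weighted incidence matrix of the complete graph $K_s$ on the vertex set $\{1,\dots,s\}$: rows are indexed by edges of $K_s$, and the row of the edge $\{l,l'\}$ with $l<l'$ has entry $-T_{l'}$ in column $l$, entry $T_l$ in column $l'$, and $0$ elsewhere. An $s\times s$ minor of the matrix is then built either (A) from $s$ rows of $N_s$, i.e.\ a set of $s$ edges of $K_s$, or (B) from the row $(B_1,\dots,B_s)$ together with a set $S$ of $s-1$ edges of $K_s$. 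I would show that every minor of type (A) vanishes, that every minor of type (B) whose edge set $S$ does not span $K_s$ as a tree vanishes, and that the remaining minors — one for each spanning tree of $K_s$ — are computed by an explicit monomial formula; assembling these yields the claim.

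The vanishing in case (A) is a rank count: every row of $N_s$ lies in the kernel of the $R$-linear map $R^s\to R$ sending the $l$-th standard basis vector to $T_l$ (since $T_lT_{l'}-T_{l'}T_l=0$), a submodule that becomes $(s-1)$-dimensional over $\Frac(R)$, so any $s$ rows of $N_s$ are linearly dependent. For a type (B) minor attached to $S$, I would expand the determinant along the row $(B_1,\dots,B_s)$; if the graph $(\{1,\dots,s\},S)$ is disconnected, then after deleting that row and any one column the matrix becomes block diagonal along the connected components, and a count of edges against vertices in each component (a connected graph on $m$ vertices has at least $m-1$ edges) forces either some non-square block — hence linearly dependent rows in the whole matrix — or, if all blocks are square, a component not meeting the deleted column contributing the full square weighted incidence matrix of a connected graph with as many edges as vertices, whose rows again lie in a corank-one subspace; either way the minor is $0$.

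For a type (B) minor attached to a spanning tree $T$ of $K_s$, expanding along $(B_1,\dots,B_s)$ writes it as $\sum_j B_j w_j$ with $w_j=(-1)^{s+j}\det(N_T^{(j)})$, where $N_T^{(j)}$ deletes column $j$. I would note that the product of $N_T$ with $w$, regarded as a column, vanishes (each entry is $\pm$ a determinant with a repeated row), so $w$ spans the rank-one kernel of $N_T$; since over $\Frac(R)$ that kernel is the line through $(T_1,\dots,T_s)$ and $\gcd(T_1,\dots,T_s)=1$, one gets $w_j=c_T\,T_j$ for a single polynomial $c_T$, and hence the minor equals $c_T\bigl(\sum_l B_lT_l\bigr)$. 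A short induction on $s$, removing a leaf of $T$, then identifies $c_T=\pm\prod_v T_v^{\deg_T(v)-1}$, a monomial of degree $\sum_v(\deg_T(v)-1)=2(s-1)-s=s-2$.

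Combining the three cases, $\Fitt$ of the matrix is generated by the elements $\bigl(\sum_l B_lT_l\bigr)\prod_v T_v^{\deg_T(v)-1}$ as $T$ runs over the spanning trees of $K_s$. To finish I would invoke that a sequence of positive integers $(d_1,\dots,d_s)$ is the degree sequence of a labelled tree precisely when $\sum_v d_v=2(s-1)$ (Prüfer correspondence), so the monomials $\prod_v T_v^{\deg_T(v)-1}$ realize every degree-$(s-2)$ monomial in $T_1,\dots,T_s$; hence the ideal is $\bigl(\sum_{l=1}^s B_lT_l\bigr)(T_1,\dots,T_s)^{s-2}$, as asserted. I expect the genuinely technical points to be the sign bookkeeping in the last-row expansion — so that the cofactors recombine into exactly $\sum_l B_lT_l$ — and the leaf-removal induction pinning down $c_T$ as the stated monomial; the type-(A) and disconnected-(B) vanishings and the degree-sequence realization should be routine.
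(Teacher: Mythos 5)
Your proposal is correct and follows the same overall architecture as the paper's proof — read $N_s$ as a weighted incidence matrix of $K_s$, match the $s\times s$ minors with graphs on $\{1,\dots,s\}$ having $s-1$ or $s$ edges, show only spanning trees contribute, evaluate that contribution as $\bigl(\sum_l B_l T_l\bigr)\prod_v T_v^{\deg(v)-1}$, and realize every degree-$(s-2)$ monomial — but the two technical hinges are handled differently in a way worth noting.

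For the vanishing of non-tree minors, the paper proves a single lemma: any $\cA$ of size $s$ whose graph is not a tree contains a cycle, and the $r\times r$ block on the cycle rows/columns has zero determinant. You instead split into two cases (all $s$ rows from $N_s$, handled by a rank count in the kernel of $e_l\mapsto T_l$; and $s-1$ edges plus the $B$-row with a disconnected graph, handled by a block/edge-count argument). Both work; the paper's single-case cycle argument is slightly more uniform.

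For the tree case, your route is genuinely different and arguably slicker on the sign bookkeeping. The paper first shows (via a rooted-tree ordering making the cofactor matrix triangular) that each cofactor of $B_l$ equals $\pm T_l\prod_v T_v^{\deg(v)-1}$ \emph{up to a sign $\epsilon_l$}, and then proves $\epsilon_1=\cdots=\epsilon_s$ by the substitution trick $B_l\mapsto T_{l'}$, $B_{l'}\mapsto -T_l$ (which reduces to the vanishing lemma). You observe instead that the cofactor vector $w$ of the $B$-row lies in $\ker N_T$ (the Laplace-expansion-with-a-repeated-row argument), which over $\Frac(R)$ is the line through $(T_1,\dots,T_s)$, and since the $T_j$ are pairwise coprime primes in the UFD, $w_j=c_T T_j$ for a single polynomial $c_T$ — so the relative signs are forced for free, and there is no separate sign lemma. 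You then pin down $c_T$ by leaf-removal induction rather than the rooted-ordering triangularization. Finally you cite the Prüfer degree-sequence characterization where the paper proves the same realizability claim by a short induction; these are interchangeable. In short: same skeleton, but your kernel/adjugate argument for the tree minor is a cleaner — and in my view preferable — way to get Claim~4.7, while the paper's rooted ordering is more hands-on and produces the monomial $\prod_v T_v^{\deg(v)-1}$ directly rather than inductively.
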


The rest of this subsection is devoted to the proof of this proposition.
The cases $s = 0$, $s = 1$ are clear.
Let us suppose $s \geq 2$.

Recall that the rows of $N_s(T_1, \dots, T_s)$ are labeled $x_l x_{l'}$ ($1 \leq l < l' \leq s$).
We also attach a label $y$ to the last row $\begin{pmatrix} B_1 & \cdots & B_s \end{pmatrix}$.
For any subset 
\[
\cA \subset \{x_l x_{l'} \mid 1 \leq l < l' \leq s\} \amalg \{y\}
\]
with $\# \cA = s$, let $N_{\cA}$ be the $s \times s$ submatrix that is constructed by picking up the rows whose labels are in $\cA$.
Then the left hand side in Proposition \ref{prop:Fitt_p2} is generated by $\det(N_{\cA})$ for all such $\cA$.

For such an $\cA$, let us construct an undirected simple graph $G_{\cA}$ that has $s$ vertices $x_1, \dots, x_s$ so that $x_l x_{l'} \in \cA$ if and only if $x_l$ and $x_{l'}$ are adjacent.
Then the number of the edges of $G_{\cA}$ is either $s-1$ or $s$; indeed, it is $s-1$ if and only if $y \in \cA$.
This construction gives a one-to-one correspondence between the set
\[
\{ \cA \subset \{x_l x_{l'} \mid 1 \leq l < l' \leq s\} \amalg \{y\} \mid \# \cA = s\}
\]
and the set
\[
\{ \text{ simple graph structures on the set of vertices $\{x_1, \dots, x_s\}$ with $s-1$ or $s$ edges }\}.
\]

We shall describe $\det(N_{\cA})$ by using information about the associated graph $G_{\cA}$.

\begin{claim}\label{claim2}
We have $\det(N_{\cA}) = 0$ unless $G_{\cA}$ is a tree.
\end{claim}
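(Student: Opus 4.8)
The plan is to prove the contrapositive: if $G_{\cA}$ is not a tree, then $\det(N_{\cA}) = 0$. I would begin with a combinatorial reduction. The graph $G_{\cA}$ has $s$ vertices, and by construction it has $s-1$ edges when $y \in \cA$ and $s$ edges when $y \notin \cA$. In both cases the condition that $G_{\cA}$ is not a tree is equivalent to the condition that $G_{\cA}$ contains a cycle: a graph on $s$ vertices with $s$ edges always contains a cycle, while a graph on $s$ vertices with $s-1$ edges is a tree if and only if it is connected, if and only if it is acyclic. Hence it suffices to prove $\det(N_{\cA}) = 0$ whenever $G_{\cA}$ contains a cycle.

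For this I would exhibit an explicit nontrivial linear relation among the rows of $N_{\cA}$ supported on the edges of a cycle. Recall that the row of $N_s(T_1,\dots,T_s)$ labeled $x_l x_{l'}$ (with $l < l'$) is $-T_{l'}\mathbf{e}_l + T_l\mathbf{e}_{l'}$, where $\mathbf{e}_1,\dots,\mathbf{e}_s$ denotes the standard basis indexed by $x_1,\dots,x_s$; up to sign this is the weighted edge vector $T_a\mathbf{e}_b - T_b\mathbf{e}_a$ attached to either orientation $(a,b)$ of the edge $\{l,l'\}$. Given a cycle on distinct vertices $v_1,\dots,v_k$ (necessarily $k \geq 3$), I set $\rho_j = T_{v_j}\mathbf{e}_{v_{j+1}} - T_{v_{j+1}}\mathbf{e}_{v_j}$ for $j \in \Z/k$, so that each $\rho_j$ equals $\pm$ the row of $N_{\cA}$ labeled by the edge $\{v_j, v_{j+1}\}$. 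With the monomial coefficients $c_j = \prod_{m \in \Z/k,\, m \neq j, j+1} T_{v_m}$, one checks the identity $c_{j-1}T_{v_{j-1}} = c_j T_{v_{j+1}}$, and this yields $\sum_{j \in \Z/k} c_j\rho_j = 0$, because the coefficient of $\mathbf{e}_{v_j}$ in that sum is exactly $c_{j-1}T_{v_{j-1}} - c_j T_{v_{j+1}}$. Since the $k$ edges of the cycle are distinct and each $c_j$ is a nonzero monomial, this is a nontrivial linear dependence among the rows of $N_{\cA}$ over the fraction field $\Frac(\Z[T_1,\dots,T_s,B_1,\dots,B_s])$, with all rows outside the cycle (including the $y$-row, if present) occurring with coefficient $0$. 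Therefore $\det(N_{\cA}) = 0$.

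I do not expect a genuine obstacle here; the only point requiring care is the sign bookkeeping, namely that replacing the oriented vectors $\rho_j$ by the actual rows of $N_{\cA}$ preserves the nontriviality of the relation. An equivalent and perhaps slicker route avoids explicit coefficients altogether: working over the fraction field, rescale the $j$-th column by $T_j^{-1}$ and divide each edge-row by the product of the two variables occurring in it; the edge-rows of $N_{\cA}$ then become the ordinary signed incidence vectors of $G_{\cA}$, and the conclusion follows from the standard fact that the incidence matrix of a graph on $s$ vertices with $c$ connected components has rank $s-c$, which is strictly less than $s$ in our situation. Either way the argument is short; the real work of the section lies in the subsequent analysis of the trees $G_{\cA}$.
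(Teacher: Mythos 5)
Your argument is correct and matches the paper's approach: both reduce to the case that $G_{\cA}$ contains a cycle and then observe that the rows of $N_{\cA}$ indexed by the cycle edges are linearly dependent (the paper relabels indices to make the cycle consecutive and notes the resulting circulant-like $r \times r$ block has vanishing determinant, while you supply the explicit monomial coefficients $c_j$ witnessing the dependence — which is precisely the content underlying the paper's ``it is easy to see''). One small slip in your incidence-matrix variant: to reduce the edge row $-T_{l'}\mathbf{e}_l + T_l\mathbf{e}_{l'}$ to a signed incidence vector you should multiply column $j$ by $T_j$, not $T_j^{-1}$, before dividing each edge row by $T_l T_{l'}$.
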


\begin{proof}
Suppose that $G_{\cA}$ is not a tree.
Since the number of the edges of $G_{\cA}$ is at least $s-1$, this assumption is equivalent to that $G_{\cA}$ has a cycle.
In other words, by permutation of the indices, we may assume that $\{x_1x_2, x_2 x_3, \dots, x_{r-1} x_r, x_r x_1\} \subset \cA$ for some $3 \leq r \leq s$.
In this case, the matrix $N_{\cA}$ is of the form
\[
\begin{pmatrix}
	\begin{matrix}
		-T_2 & T_1 & & & \\
		& -T_3 & T_2 & & \\
		& & \ddots & \ddots & \\
		& & & - T_r & T_{r-1}\\
		-T_r & & & & T_1
	\end{matrix}
 & 0\\
* & * \\
\end{pmatrix}
\]
if we use the order
\[
x_1, x_2, \dots, x_r, *, \dots, *
\]
of the vertices and the order 
\[
x_1x_2, x_2 x_3, \dots, x_{r-1} x_r, x_r x_1, *, \dots, *
\]
 of the rows in $\cA$ ($*$ denotes unspecified things).
It is easy to see that the $r \times r$ matrix in the upper left has determinant $0$.
Therefore, the claim follows.
\end{proof}

By Claim \ref{claim2}, we only have to deal with the case where $G_{\cA}$ is a tree.
Note that then the number of the edges of $G_{\cA}$ is $s-1$, i.e., $y \in \cA$.
For each $1 \leq l \leq s$, let $\deg_{\cA}(x_l) \geq 1$ be the degree of $x_l$ in the graph $G_{\cA}$.
By definition, $\deg_{\cA}(x_l)$ is the number of vertices that are adjacent to $x_l$ in $G_{\cA}$.

\begin{claim}\label{claim3}
If $G_{\cA}$ is a tree, then we have
\[
\det(N_{\cA}) = \pm \left(\sum_{l = 1}^s B_l T_l \right) \cdot T_1^{\deg_{\cA}(x_1)-1} \cdots T_s^{\deg_{\cA}(x_s) - 1}.
\]
\end{claim}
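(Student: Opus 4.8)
The plan is to compute the determinant of $N_{\cA}$ by a cofactor expansion along the row labelled $y$, namely $\begin{pmatrix} B_1 & \cdots & B_s \end{pmatrix}$, which is the only row involving the indeterminates $B_l$. This immediately gives $\det(N_{\cA}) = \sum_{l=1}^s (\pm 1) B_l M_l$, where $M_l$ is the $(s-1)\times(s-1)$ minor of $N_s(T_1,\dots,T_s)$ obtained by selecting the $s-1$ rows indexed by the edges of the tree $G_{\cA}$ and deleting the column $x_l$. So everything reduces to understanding these minors $M_l$ of the ``edge'' part of the matrix.

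Next I would identify $M_l$ combinatorially. The submatrix of $N_s(T_1,\dots,T_s)$ picking the rows corresponding to the edges of $G_{\cA}$ is, up to signs, the (signed) incidence matrix of the tree $G_{\cA}$ with entries $\pm T_j$ rather than $\pm 1$: the row for edge $\{x_j, x_k\}$ has entry $\pm T_k$ in column $x_j$ and $\pm T_j$ in column $x_k$ (one checks this directly from $d_2(x_jx_k) = -T_k x_j + T_j x_k$). Deleting column $x_l$ yields a square matrix whose determinant is a weighted version of the ``reduced incidence matrix determinant'' of a tree, which is classically $\pm 1$. With the $T$-weights one gets $M_l = \pm \prod_{j=1}^s T_j^{c_{l,j}}$ for some exponents $c_{l,j}\geq 0$; the key point to pin down is that $c_{l,j} = \deg_{\cA}(x_j) - 1$ when $j \neq l$ and $c_{l,l} = \deg_{\cA}(x_l)$. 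I would verify this by a deletion/leaf induction on the tree: pick a leaf $x_m$ with $m \neq l$ (possible since a tree on $\geq 2$ vertices has at least two leaves), expand along its unique edge, and track how the degree of its neighbour drops. An equivalent and perhaps cleaner route: observe that in the $(s-1)\times(s-1)$ matrix for $M_l$, column $x_j$ (for $j \neq l$) has exactly $\deg_{\cA}(x_j)$ nonzero entries, all equal to $\pm T_j$, and row for edge $\{x_j,x_k\}$ contributes a factor $T_j$ or $T_k$; a careful bookkeeping of which variable each of the $s-1$ rows contributes in a nonzero term of the determinant expansion forces the stated exponents, because the tree structure makes the nonzero expansion term essentially unique up to sign.

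Finally, assembling: $\det(N_{\cA}) = \sum_l (\pm 1) B_l M_l = \sum_l (\pm 1) B_l \cdot (\pm 1)\, T_l \prod_{j} T_j^{\deg_{\cA}(x_j)-1} = \left(\prod_{j=1}^s T_j^{\deg_{\cA}(x_j)-1}\right)\sum_l \varepsilon_l B_l T_l$ with signs $\varepsilon_l \in \{\pm1\}$. The remaining task is to check that all the $\varepsilon_l$ coincide (up to a global sign), so that the sum is genuinely $\pm\sum_l B_l T_l$ rather than a mixed-sign combination; this is where I expect the real work to lie. I would handle the signs either by choosing a consistent orientation of the edges of $G_{\cA}$ (e.g. orient every edge away from a fixed root, or toward the deleted vertex $x_l$) and checking that the row signs in $N_s$ can be matched up coherently — the freedom in choosing signs in $d_2$, already noted in the text as irrelevant, gives room to do this — or by the leaf induction above, where the inductive hypothesis propagates a uniform sign. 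The degree/exponent bookkeeping is routine once the framework is set up; reconciling the signs across all $l$ simultaneously is the main obstacle.
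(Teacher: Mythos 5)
Your overall strategy matches the paper's: cofactor expansion along the $y$ row, reducing to computing the $(s-1)\times (s-1)$ minors of the edge part of $N_s$ (the weighted incidence matrix of the tree with column $x_l$ deleted), and showing each such minor is the monomial $T_l\prod_j T_j^{\deg_{\cA}(x_j)-1}$ up to sign. The paper cleans up your exponent bookkeeping by reordering the vertices by depth from the root $x_l$: with vertex order $x_{\sigma(2)},\dots,x_{\sigma(s)}$ (nondecreasing depth) and edge order matching each vertex with its parent edge, the cofactor matrix becomes lower triangular, so the product of the diagonal entries $T_{\sigma(\ul{k})}$ immediately yields the exponents ``number of children,'' i.e.\ $\deg(x_l)$ at the root and $\deg(x_j)-1$ elsewhere. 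Your leaf-induction route would also work, but the triangularization is shorter.

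Where you diverge is the sign reconciliation, and you correctly flag it as ``where the real work lies'' — but you leave it unfinished, with two sketched strategies (consistent edge orientation, or propagating a sign through the leaf induction) that are plausible but not carried out and would require genuine care. The paper sidesteps all sign bookkeeping with a short trick that is worth internalizing: first prove the \emph{weaker} statement $\det(N_{\cA}) = \bigl(\sum_l \epsilon_l B_l T_l\bigr)\prod_j T_j^{\deg(x_j)-1}$ with $\epsilon_l\in\{\pm1\}$ undetermined. Then, for any pair $l\neq l'$, substitute $B_l = T_{l'}$, $B_{l'}=-T_l$, and $B_m=0$ for $m\neq l,l'$. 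This turns the row $y$ into (a sign multiple of) the row $x_lx_{l'}$ of $N_s$, so the resulting matrix is, up to row sign, $N_{\cA'}$ with $\cA'$ replacing $y$ by a new edge $x_lx_{l'}$; since adding an edge to a tree creates a cycle, Claim 4.5 forces the determinant to vanish, giving $\epsilon_l T_{l'}T_l - \epsilon_{l'} T_l T_{l'} = 0$, i.e.\ $\epsilon_l=\epsilon_{l'}$. You should incorporate this (or complete one of your two routes in full); as written, the sign step is the one genuine gap in your proposal.
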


\begin{proof}
Let us write simply $\deg(-)$ instead of $\deg_{\cA}(-)$.
First we show that the claim follows from the following weaker claim: there exist signs $\epsilon_1, \dots, \epsilon_s \in \{\pm 1\}$ such that we have
\[
\det(N_{\cA}) = \left(\sum_{l = 1}^s \epsilon_l B_l T_l \right) \cdot T_1^{\deg(x_1)-1} \cdots T_s^{\deg(x_s)-1}.
\]
Suppose that such a family $\{\epsilon_l\}_l$ exists.
By Claim \ref{claim2}, for any $l \neq l'$, we know that $\det(N_{\cA})$ vanishes if we set $B_l = T_{l'}$, $B_{l'} = -T_l$, and the other $B$'s to be zero.
Therefore, $\epsilon_l T_{l'} T_l + \epsilon_{l'} (-T_l) T_{l'} = 0$, which shows $\epsilon_l = \epsilon_{l'}$.
Thus we obtain $\epsilon_1 = \cdots = \epsilon_s$, so the full claim follows.

Let us show the weaker claim.
By the cofactor expansion of $\det(N_{\cA})$ with respect to the final row (labeled $y$), we have
\[
\det(N_{\cA})
= \sum_{l = 1}^s B_l \delta_l,
\]
where $\delta_l$ denotes the cofactor of $B_l$ (i.e., the determinant of the submatrix obtained by eliminating the last row $y$ and the $l$-th column).
Here and henceforth, we ignore the sign of $\delta_l$, which does not matter for the weaker claim.
Then it is enough to show that, for any fixed $l$, the cofactor $\delta_l$ coincides with $T_l \cdot T_1^{\deg(x_1)-1} \cdots T_s^{\deg(x_s)-1}$ up to sign.

Fix $l$.
Let us reorder the vertices $x_1, \dots, x_s$ as follows.
We regard $x_l$ as the root of the tree $G_{\cA}$.
Recall that then the depth of each vertex $x_k$, denoted by $\depth(x_k)$, is defined as the length of the unique path from the root $x_l$ to $x_k$.
(We set $\depth(x_l) = 0$.)
Now we reorder the vertices of the rooted graph $G_{\cA}$ as
\[
x_{\sigma(1)}, x_{\sigma(2)}, \dots, x_{\sigma(s)},
\]
where $\sigma$ is a permutation of the set $\{1, 2, \dots, s\}$ such that 
\[
\depth(x_{\sigma(k)}) \leq  \depth(x_{\sigma(k+1)})
\]
for every $1 \leq k \leq s - 1$.
We necessarily have $\sigma(1) = l$, but this $\sigma$ is not unique in general.

For each $2 \leq k \leq s$, let $1 \leq \ul{k} \leq s$ be the index such that $x_{\sigma(\ul{k})}$ is the parent of $x_{\sigma(k)}$.
It is the unique vertex that is adjacent to $x_{\sigma(k)}$ and whose depth is less than that of $x_{\sigma(k)}$.
Note that we have $1 \leq \ul{k} < k$.

Now, to compute the cofactor $\delta_l$, we use the order
\[
x_{\sigma(2)}, x_{\sigma(3)}, \dots, x_{\sigma(s)}
\]
of the vertices and the order
\[
x_{\sigma(\ul{2})} x_{\sigma(2)}, x_{\sigma(\ul{3})} x_{\sigma(3)}, \dots, x_{\sigma(\ul{s})} x_{\sigma(s)}
\]
of the edges.
Then, thanks to $\ul{k} < k$, the matrix whose determinant is $\delta_l$ is lower triangular and we obtain
\[
\delta_l = \pm T_{\sigma(\ul{2})} T_{\sigma(\ul{3})} \cdots T_{\sigma(\ul{s})}.
\]
Since $x_{\sigma(\ul{k})}$ is the parent of $x_{\sigma(k)}$, for each $1 \leq l' \leq s$, the exponent of the indeterminate $T_{l'}$ in this product is equal to the number of the children of the vertex $x_{l'}$.
If the vertex $x_{l'}$ is the root, i.e., if $l' = l$, then the number of children is equal to $\deg(x_l)$.
Otherwise, i.e., if $l' \neq l$, the number of children is equal to $\deg(x_{l'}) - 1$.
This completes the proof of the claim.
\end{proof}

\begin{eg}
We illustrate the above proof by an example.
Let $s = 5$ and consider $\cA = \{x_1 x_2, x_1 x_3, x_1 x_4, x_2 x_5, y\}$, so
\[
N_{\cA}
= \begin{pmatrix}
	-T_2 & T_1 & & &\\
	-T_3 & & T_1 & &\\
	-T_4 & & & T_1 &\\
	& -T_5 & & & T_2\\
	B_1 & B_2 & B_3 & B_4 & B_5	
\end{pmatrix}.
\]
In this case we have $\deg_{\cA}(x_1) = 3$, $\deg_{\cA}(x_2) = 2$, and $\deg_{\cA}(x_3) = \deg_{\cA}(x_4) = \deg_{\cA}(x_5) = 1$.

To consider the cofactor $\delta_1$ of $B_1$, we regard $x_1$ as the root.
Then $\depth(x_2) = \depth(x_3) = \depth(x_4) = 1$ and $\depth(x_5) = 2$, so we use the order $x_2, x_3, x_4, x_5$ for the vertices.
We have $\ul{2} = 1$, $\ul{3} = 1$, $\ul{4} = 1$, $\ul{5} = 2$, so we use the order $x_1 x_2, x_1 x_3, x_1 x_4, x_2 x_5$ for the edges.
Then we obtain
\[
\delta_1 = \pm \det 
\begin{pmatrix}
	T_1 & & &\\
	& T_1 & &\\
	& & T_1 &\\
	T_5 & & & T_2\\
\end{pmatrix}
= \pm T_1^3 T_2.
\]
Here and in the following examples, we omit writing $\pm$ before the indeterminates since we may ignore the signs.

Similarly, to consider $\delta_2$, we use $x_1, x_5, x_3, x_4$ for the vertices and $x_1 x_2, x_2 x_5, x_1 x_3, x_1 x_4$ for the edges, and obtain
\[
\delta_2 = \pm \det 
\begin{pmatrix}
	T_2 & & &\\
	& T_2 & &\\
	T_3 & & T_1 &\\
	T_4 & & & T_1\\
\end{pmatrix}
= \pm T_1^2 T_2^2.
\]

To consider $\delta_3$, we use the orders $x_1, x_2, x_4, x_5$ and $x_1 x_3, x_1 x_2, x_1 x_4, x_2 x_5$ and obtain
\[
\delta_3 = \pm \det 
\begin{pmatrix}
	T_3 & & &\\
	T_2 & T_1 & &\\
	T_4 & & T_1 &\\
	& T_5 & & T_2\\
\end{pmatrix}
= \pm T_1^2 T_2 T_3.
\]
\end{eg}

Now we return to the general case.
By Claims \ref{claim2} and \ref{claim3}, the Fitting ideal to be computed in Proposition \ref{prop:Fitt_p2} is generated by
\[
\left(\sum_{l = 1}^s B_l T_l \right) \cdot T_1^{\deg_{\cA}(x_1)-1} \cdots T_s^{\deg_{\cA}(x_s) - 1},
\]
where $G_{\cA}$ varies all tree structures on the set of vertices $\{x_1, \dots, x_s \}$.
Note that we have
\[
\sum_{l=1}^s (\deg_{\cA}(x_l) - 1) 
= \sum_{l=1}^s \deg_{\cA}(x_l) - s
= 2(s-1) - s = s-2.
\]
It remains only to show that, conversely, the tuple $(\deg_{\cA}(x_1) - 1, \dots, \deg_{\cA}(x_s) - 1)$ can be any tuple whose sum is $s - 2$.
This assertion follows from the following.

\begin{claim}
Let $s \geq 2$ and let $f_1, \dots, f_s$ be integers such that $f_l \geq 0$ and $f_1 + \dots + f_s = s - 2$.
Then there exists a tree structure on the set of vertices $x_1, \dots, x_s$ such that the degree of $x_l$ equals $f_l + 1$ for all $1 \leq l \leq s$.
\end{claim}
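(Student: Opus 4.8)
The plan is to prove this by induction on $s$; it is the standard realization result for degree sequences of trees, since $(f_1 + 1, \dots, f_s + 1)$ is a sequence of positive integers summing to $\sum_{l=1}^s (f_l + 1) = (s-2) + s = 2(s-1)$. First I would dispose of the base case $s = 2$: the hypotheses force $f_1 = f_2 = 0$, and the tree consisting of the single edge joining $x_1$ and $x_2$ has both degrees equal to $1 = f_l + 1$.

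For the inductive step with $s \geq 3$, the key combinatorial observation is a pigeonhole argument: since $\sum_{l=1}^s f_l = s - 2$ and each $f_l \geq 0$, at least two of the $f_l$ must vanish, because if at most one were zero the sum would be at least $s - 1$. On the other hand, since $s - 2 \geq 1$, at least one index has $f_l \geq 1$; after relabeling we may assume $f_1 \geq 1$ and $f_s = 0$ with $1 \neq s$. I would then apply the inductive hypothesis to the tuple $(f_1 - 1, f_2, \dots, f_{s-1})$, whose entries are nonnegative and sum to $(s-2) - 1 = (s-1) - 2$, obtaining a tree $G'$ on the vertices $x_1, \dots, x_{s-1}$ in which $x_1$ has degree $f_1$ and $x_l$ has degree $f_l + 1$ for $2 \leq l \leq s-1$. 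Adjoining $x_s$ to $G'$ by a single new edge to $x_1$ yields a tree on $x_1, \dots, x_s$ in which $x_1$ now has degree $f_1 + 1$, the new vertex $x_s$ has degree $1 = f_s + 1$, and all other degrees are unchanged, which is exactly what we want.

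The only point requiring any care — and it is minor — is the pigeonhole step: we genuinely need \emph{two} vanishing entries, not one, so that after isolating a leaf $x_s$ with $f_s = 0$ there still remains an index with positive value to decrement. An alternative, non-inductive route would be to invoke the Prüfer correspondence, under which trees on $s$ labeled vertices correspond bijectively to words of length $s - 2$ in the alphabet $\{1, \dots, s\}$ with $x_l$ occurring $\deg(x_l) - 1$ times; choosing any word in which $l$ occurs exactly $f_l$ times (possible precisely because $\sum_l f_l = s - 2$) produces the required tree. I would present the inductive argument, as it is entirely self-contained.
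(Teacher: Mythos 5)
Your inductive proof matches the paper's argument essentially step for step: base case $s=2$, locate one index with $f_l=0$ and another with $f_{l'}\geq 1$, decrement the positive one, apply the inductive hypothesis on $s-1$ vertices, and attach the leaf. The "two vanishing entries" observation is a bit more than the paper uses (one zero plus one positive suffices, which you also establish), and your aside about the Pr\"ufer correspondence is a clean alternative the paper does not mention, but the main line of reasoning is the same.
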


\begin{proof}
We argue by the induction on $s$.
When $s = 2$, we must have $f_1 = f_2 = 0$, and the unique tree structure satisfies the property.
Suppose $s \geq 3$.
By $f_1 + \dots + f_s = s - 2$, we have $f_l = 0$ and $f_{l'} \geq 1$ for some $l$ and $l'$, so we may assume that $f_s = 0$ and $f_1 \geq 1$.
By the induction hypothesis, there exists a tree structure on $x_1, \dots, x_{s-1}$ such that the degree of $x_1$ is $f_1$ and the degree of $x_l$ is $f_l+1$ for $2 \leq l \leq s-1$.
Then we obtain the desired graph by simply connecting $x_1$ and $x_s$.
\end{proof}

This completes the proof of Proposition \ref{prop:Fitt_p2}.

\subsection{Proof of Proposition \ref{prop:Fitt_fin2}}\label{ss:thm2}

In this subsection, we prove Proposition \ref{prop:Fitt_fin2}.

We first construct a free resolution of the augmentation ideal $I \subset R = \Z[\Gamma]$ over $R$ in a similar way as in \S \ref{ss:resol}.
Note that the idea is already used in previous work such as \cite{AK21}.

For each $1 \leq l \leq s$, we have an exact sequence
\[
\Z[\Delta_l] x_l^2 \overset{\nu_l}{\to} \Z[\Delta_l] x_l \overset{\tau_l}{\to} \Z[\Delta_l] \to \Z \to 0,
\]
where $x_l$ denotes an indeterminate, the map $\nu_l$ sends $x_l^2$ to $\nu_l x_l$, $\tau_l$ sends $x_l$ to $\tau_l$, and the map $\Z[\Delta_l] \to \Z$ is the augmentation map.

By taking the tensor product of the complexes $[\Z[\Delta_l] x_l^2 \overset{\nu_l}{\to} \Z[\Delta_l] x_l \overset{\tau_l}{\to} \Z[\Delta_l]]$ over $\Z$, we obtain an exact sequence
\[
\bigoplus_{l=1}^s R x_l^2 \oplus \bigoplus_{1 \leq l < l' \leq s} R x_l x_{l'} 
\overset{d_2}{\to} \bigoplus_{l=1}^s R x_l 
\overset{d_1}{\to} R
\overset{d_0}{\to} \Z \to 0,
\]
where $d_0$ is the augmentation map, $d_1$ is determined by
\[
d_1(x_l) = \tau_l  \quad (1 \leq l \leq s),
\]
and $d_2$ is determined by
\[
\begin{cases}
d_2(x_l^2) = \nu_l x_l & (1 \leq l \leq s),\\
d_2(x_l x_{l'}) = -\tau_{l'} x_l + \tau_l x_{l'} &(1 \leq l < l' \leq s).
\end{cases}
\]

Therefore, the presentation matrix of $d_2$ is of the form
\[
M_s(\nu_1,\dots, \nu_s, \tau_1, \dots, \tau_s) = 
\begin{pmatrix}
\begin{matrix}
\nu_1 & & \\
& \ddots & \\
& & \nu_s \\
\end{matrix}\\
 N_s(\tau_1, \dots, \tau_s) 
\end{pmatrix}.
\]
Here, $N_s(\tau_1, \dots, \tau_s)$ denotes the matrix obtained by setting $T_l = \tau_l$ in the matrix $N_s(T_1, \dots, T_s)$ constructed in \S \ref{ss:resol}.

We have constructed a presentation $M_s(\nu_1,\dots, \nu_s, \tau_1, \dots, \tau_s)$ of the $\Z[\Gamma]$-module $I = \Ker(d_0: R \to \Z)$.
Our next task is to construct a presentation of $I/R(\# \Gamma - N_{\Gamma})$.
For that purpose, we define elements $b_1, \cdots, b_s \in R$ by
\[
b_l = \nu_1 \cdots \nu_{l-1} \cdot D_l \cdot n_{l+1} \cdots n_s
\]
for $1 \leq l \leq s$.
This is where the Kolyvagin derivative operators come into play.

\begin{lem}\label{lem:sum_bt}
We have
\[
\sum_{l=1}^s b_l \tau_l = \# \Gamma - N_{\Gamma}.
\]
\end{lem}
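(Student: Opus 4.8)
The plan is to expand each summand $b_l \tau_l$ using the defining relation \eqref{eq:D_key} and to observe that the resulting sum telescopes. First I would substitute the definition $b_l = \nu_1 \cdots \nu_{l-1} \cdot D_l \cdot n_{l+1} \cdots n_s$ and commute $\tau_l$ past the surrounding coefficients (all elements lie in the commutative ring $\Z[\Gamma]$, and $n_{l+1}, \dots, n_s$ are integers), obtaining
\[
b_l \tau_l = \nu_1 \cdots \nu_{l-1} \cdot (\tau_l D_l) \cdot n_{l+1} \cdots n_s = \nu_1 \cdots \nu_{l-1} (n_l - \nu_l) n_{l+1} \cdots n_s,
\]
where the last equality is exactly \eqref{eq:D_key}.

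Next I would introduce, for $1 \leq l \leq s+1$, the auxiliary element
\[
c_l = \nu_1 \cdots \nu_{l-1} \cdot n_l n_{l+1} \cdots n_s \in \Z[\Gamma],
\]
with the convention that the empty product of the $\nu$'s (when $l = 1$) and the empty product of the $n$'s (when $l = s+1$) are both equal to $1$. Expanding the factor $n_l - \nu_l$ in the previous display gives $b_l \tau_l = c_l - c_{l+1}$, and summing over $1 \leq l \leq s$ telescopes to $c_1 - c_{s+1}$.

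Finally I would identify the two surviving terms: $c_1 = n_1 \cdots n_s = \# \Gamma$, while $c_{s+1} = \nu_1 \cdots \nu_s = N_{\Gamma}$ by the factorization of the norm element recorded in \S\ref{ss:main_result}. This yields $\sum_{l=1}^s b_l \tau_l = \# \Gamma - N_{\Gamma}$, as claimed. There is no real obstacle here: the only substantive input beyond bookkeeping is the identity $\tau_l D_l = n_l - \nu_l$, and the one point deserving a little care is the treatment of the boundary cases $l = 1$ and $l = s$ via the empty-product conventions, so that $c_1$ and $c_{s+1}$ come out as $\#\Gamma$ and $N_\Gamma$ respectively.
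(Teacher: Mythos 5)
Your argument is correct and is essentially the paper's own proof: both apply the identity $\tau_l D_l = n_l - \nu_l$ from \eqref{eq:D_key} and observe that the resulting sum telescopes to $n_1\cdots n_s - \nu_1\cdots\nu_s = \#\Gamma - N_\Gamma$. You merely make the telescoping explicit via the auxiliary elements $c_l$, whereas the paper states the collapse directly; there is no substantive difference.
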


\begin{proof}
By using the identity \eqref{eq:D_key}, we can compute
\begin{align}
\sum_{l=1}^s b_l \tau_l 
& = \sum_{l=1}^s \nu_1 \cdots \nu_{l-1} \cdot (n_l - \nu_l) \cdot n_{l+1} \cdots n_s\\
& = n_1 \cdots n_s - \nu_1 \cdots \nu_s\\
& = \# \Gamma - N_{\Gamma}.
\end{align}
Thus we obtain the lemma.
\end{proof}

Then Lemma \ref{lem:sum_bt} implies that
\[
d_1 \left(\sum_{l=1}^s b_l x_l \right) = \# \Gamma - N_{\Gamma},
\]
so $I/R(\# \Gamma - N_{\Gamma})$ has a presentation matrix
\[
\begin{pmatrix} M_s(\nu_1, \dots, \nu_s, \tau_1, \dots, \tau_s) \\ \begin{matrix} b_1 & \cdots & b_s \end{matrix} \end{pmatrix}
 = \begin{pmatrix} \begin{matrix} \nu_1 & & \\
& \ddots & \\
& & \nu_s \\
\end{matrix}\\
 N_s(\tau_1, \dots, \tau_s) \\
 \begin{matrix} b_1 & \cdots & b_s \end{matrix}
 \end{pmatrix}
\]
over $R$.

To  compute the Fitting ideal of this matrix,
we first observe
\begin{align}\label{eq:tM0}
& \Fitt(
	\begin{pmatrix} 
		M_s(\nu_1, \dots, \nu_s, \tau_1, \dots, \tau_s) \\ \begin{matrix} b_1 & \cdots & b_s \end{matrix}
	\end{pmatrix}
) \\
& \quad = \sum_{j = 0}^s \sum_{\substack{{\bm a} \subset \{1, 2, \dots, s\} \\ \# {\bm a} = j}} \nu_{a_1} \cdots \nu_{a_j} 
\Fitt(
	\begin{pmatrix} N_{s - j}(\tau_{a_{j + 1}}, \dots, \tau_{a_s}) \\ \begin{matrix} b_{a_{j+1}} & \cdots & b_{a_s} \end{matrix}
	\end{pmatrix}
).
\end{align}
Here, $\bm{a}$ runs over the subsets of $\{1, 2, \dots, s\}$ with $\# \bm{a} = j$ and define $a_1, \dots, a_s$ by requiring 
\[
\bm{a} = \{a_1, \dots, a_j\},
\qquad \{a_1, \dots, a_s\} = \{1, 2, \dots, s\}, 
\qquad a_1 < \dots < a_j, 
\qquad a_{j + 1} < \dots < a_s.
\]
We can show \eqref{eq:tM0} directly by using the identity $\nu_l \tau_l = 0$ (see \cite[Proposition 4.7]{AK21} for a similar reasoning).

Let us compute the right hand side of \eqref{eq:tM0} by using Proposition \ref{prop:Fitt_p2}.
When $j = s$, we have $\bm{a} = \{1, 2, \dots, s\}$, so the term is 
\begin{equation}\label{eq:tM1}
\nu_1 \cdots \nu_s (1) = (N_{\Gamma}).
\end{equation}
When $j = s - 1$, we have $\{1, 2, \dots, s\} \setminus \bm{a} = \{ l \}$ for some $1 \leq l \leq s$, and then the term is
\begin{equation}\label{eq:tM2}
\nu_1 \cdots \nu_{l-1} \cdot \nu_{l+1} \cdots \nu_s \cdot (b_l)
= (\# \Gamma) \cdot \left( \nu_1 \cdots \nu_{l-1} \cdot \frac{D_l}{n_l} \cdot \nu_{l+1} \cdots \nu_s \right).
\end{equation}
Finally we consider $0 \leq j \leq s - 2$.
For each $\bm{a}$ with $\# \bm{a} = j$, the term can be computed as
\begin{align}\label{eq:tM3}
& \nu_{a_1} \cdots \nu_{a_j} \left( b_{a_{j+1}} \tau_{a_{j+1}} + \cdots + b_{a_s} \tau_{a_s} \right)(\tau_{a_{j+1}}, \dots, \tau_{a_s})^{s - j - 2}\\
& \quad = \nu_{a_1} \cdots \nu_{a_j} \left( b_{a_1} \tau_{a_1} + \cdots + b_{a_s} \tau_{a_s} \right)(\tau_{a_{1}}, \dots, \tau_{a_s})^{s - j - 2}\\
& \quad = \nu_{a_1} \cdots \nu_{a_j} ( \# \Gamma - N_{\Gamma}) (\tau_1, \dots, \tau_s)^{s - j - 2},
\end{align}
where the first equality follows from the identity $\tau_l \nu_l = 0$; the second from Lemma \ref{lem:sum_bt}.

It is easy to see that the ideal generated by \eqref{eq:tM1}, \eqref{eq:tM2}, and \eqref{eq:tM3} coincides with the right hand side of the proposition (observe that, thanks to \eqref{eq:tM1}, the $N_{\Gamma}$ in \eqref{eq:tM3} can be ignored).
This completes the proof of Proposition \ref{prop:Fitt_fin2}.

This also completes the proof of Theorem \ref{thm:main_fin}.

\section{Voltage graphs and derived graphs}\label{sec:pre}

To formulate the setup for infinite coverings, it is convenient to use the notion of voltage graphs and their derived graphs.

\subsection{Voltage graphs and derived graphs}\label{ss:vol_graph}

See \cite[\S 2.3]{Gon22}, \cite[\S 4]{MV21b}, or \cite[\S 2.3]{RV22} for more details.

\begin{defn}
A voltage graph $(X, \Gamma, \alpha)$ consists of a graph $X$, a group $\Gamma$, and a map $\alpha: \bE_X \to \Gamma$ satisfying $\alpha(\ol{e}) = \alpha(e)^{-1}$ for any $e \in \bE_X$.
We do not assume that $\Gamma$ is finite or abelian unless explicitly stated.
\end{defn}

Let $(X, \Gamma, \alpha)$ be a voltage graph.

\begin{defn}
Suppose that $\Gamma$ is finite.
Then we construct a graph $X(\Gamma)$ (the map $\alpha$ is implicit), called the derived graph of $(X, \Gamma, \alpha)$, by
\[
V_{X(\Gamma)} = \Gamma \times V_X,
\quad
\bE_{X(\Gamma)} = \Gamma \times \bE_X,
\]
and
\[
\ol{(\gamma, e)} = (\gamma \cdot \alpha(e), \ol{e}),
\quad
s_{X(\Gamma)}((\gamma, e)) = (\gamma, s_X(e)),
\quad
t_{X(\Gamma)}((\gamma, e)) = (\gamma \cdot \alpha(e), t_X(e))
\]
for any $(\gamma, e) \in \Gamma \times \bE_X$.
\end{defn}

The group $\Gamma$ naturally acts on the graph $X(\Gamma)$ from the left.
Moreover, $X(\Gamma)$ is a covering of $X$ via the natural projection and the action of $\Gamma$ respects this covering structure.

If $X(\Gamma)$ is connected (so $X$ is also connected), then $X(\Gamma)$ is actually a Galois covering of $X$ whose Galois group is $\Gamma$.
Conversely, if we are given a Galois covering $Y/X$ of connected graphs, then there exists a voltage graph structure $(X, \Gamma, \alpha)$ with $\Gamma$ the Galois group such that $X(\Gamma)$ and $Y$ are isomorphic as coverings of $X$.
In a nutshell, the notion of derived graphs covers the notion of Galois coverings of connected graphs.

\begin{defn}\label{defn:AGamma}
We define $\Z[\Gamma]$-homomorphisms
\[
A_{X, \Gamma}, \cL_{X, \Gamma}: \Z[\Gamma] \otimes_{\Z} \Div(X) \to \Z[\Gamma] \otimes_{\Z} \Div(X)
\]
by
\[
A_{X, \Gamma}(1 \otimes [v]) = \sum_{e \in \bE_{X, v}} \alpha(e) \otimes [t(e)]
\]
for any $v \in V_X$ and
\[
\cL_{X, \Gamma} = \id \otimes D_X - A_{X, \Gamma},
\]
where $D_X: \Div(X) \to \Div(X)$ is as in Definition \ref{defn:LAD}.
Note that when $\Gamma$ is trivial, these maps $A_{X, \Gamma}$ and $\cL_{X, \Gamma}$ are naturally identified with the maps $A_X$ and $\cL_X$ in Definition \ref{defn:LAD}.
\end{defn}

The following lemma is easily proved.

\begin{lem}\label{lem:Pic_DA}
Suppose that $\Gamma$ is finite. 
Then we have a commutative diagram of $\Z[\Gamma]$-modules
\[
\xymatrix{
	\Z[\Gamma] \otimes_{\Z} \Div(X) \ar[r]^{\cL_{X, \Gamma}} \ar[d]_{\simeq}
	& \Z[\Gamma] \otimes_{\Z} \Div(X) \ar[d]^{\simeq}\\
	\Div(X(\Gamma)) \ar[r]_{\cL_{X(\Gamma)}}
	& \Div(X(\Gamma)),
}
\]
where the vertical isomorphisms are defined by sending $\gamma \otimes [v]$ to $[(\gamma, v)]$.
In particular, $\Pic(X(\Gamma))$ is isomorphic to the cokernel of the homomorphism $\cL_{X, \Gamma}$.
\end{lem}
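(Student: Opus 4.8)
The plan is a direct verification, obtained by unwinding the combinatorial definitions of the derived graph and of the operators $A$, $D$, $\cL$.

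First I would check that the prescribed vertical maps are genuine $\Z[\Gamma]$-module isomorphisms. Since $V_{X(\Gamma)} = \Gamma \times V_X$, the module $\Div(X(\Gamma))$ is free over $\Z$ on the symbols $[(\gamma,v)]$, while $\Z[\Gamma]\otimes_\Z \Div(X)$ is free over $\Z$ on the elements $\gamma \otimes [v]$ (with $\gamma\in\Gamma$, $v\in V_X$); hence the rule $\gamma\otimes[v]\mapsto[(\gamma,v)]$ extends to a $\Z$-linear bijection. Equivariance for the left $\Gamma$-actions is immediate: $\gamma'\cdot(\gamma\otimes[v]) = \gamma'\gamma\otimes[v]$ corresponds to $\gamma'\cdot[(\gamma,v)] = [(\gamma'\gamma,v)]$, which is exactly how $\Gamma$ acts on $X(\Gamma)$.

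Next, the heart of the argument is the commutativity of the square, which by $\Z[\Gamma]$-linearity of both horizontal maps it suffices to check on the generators $\gamma\otimes[v]$. On the top row, using $\cL_{X,\Gamma} = \id\otimes D_X - A_{X,\Gamma}$ together with Definitions \ref{defn:LAD} and \ref{defn:AGamma}, one computes
\[
\cL_{X,\Gamma}(\gamma\otimes[v]) = (\#\bE_{X,v})\,(\gamma\otimes[v]) - \sum_{e\in\bE_{X,v}} \gamma\alpha(e)\otimes[t(e)],
\]
which the right-hand vertical map sends to $(\#\bE_{X,v})[(\gamma,v)] - \sum_{e\in\bE_{X,v}}[(\gamma\alpha(e),t(e))]$. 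On the bottom row, one identifies $\bE_{X(\Gamma),(\gamma,v)}$ with $\bE_{X,v}$ via $(\gamma,e)\leftrightarrow e$ (this uses $s_{X(\Gamma)}((\gamma,e)) = (\gamma,s(e))$), whence $D_{X(\Gamma)}([(\gamma,v)]) = (\#\bE_{X,v})[(\gamma,v)]$, and using $t_{X(\Gamma)}((\gamma,e)) = (\gamma\alpha(e),t(e))$ one gets $A_{X(\Gamma)}([(\gamma,v)]) = \sum_{e\in\bE_{X,v}}[(\gamma\alpha(e),t(e))]$. Subtracting yields the same expression, so the square commutes.

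Finally, the ``in particular'' clause is formal: $\Pic(X(\Gamma))$ is by definition (Definition \ref{defn:JP}) the cokernel of $\cL_{X(\Gamma)}$, and a commutative square whose vertical maps are isomorphisms identifies this cokernel with that of $\cL_{X,\Gamma}$. I do not expect a real obstacle here; the only point requiring care is the bookkeeping of the $\alpha(e)$-twist built into $t_{X(\Gamma)}$ and the left-action convention on $\Gamma$, which must be tracked consistently so that the general generator $\gamma\otimes[v]$, and not merely $1\otimes[v]$, is handled correctly.
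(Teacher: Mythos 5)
Your proof is correct. The paper itself offers no argument for this lemma (it just says ``easily proved''), and your direct unwinding of the definitions is exactly the intended verification: it suffices to check commutativity on the generators $\gamma\otimes[v]$, and both $\Gamma$-equivariance and the identification $\bE_{X(\Gamma),(\gamma,v)}\cong\bE_{X,v}$ (via $s_{X(\Gamma)}((\gamma,e))=(\gamma,s_X(e))$) are handled correctly; the $\alpha(e)$-twist in $t_{X(\Gamma)}$ matches the $\gamma\alpha(e)\otimes[t(e)]$ term produced by $\Z[\Gamma]$-linearity of $A_{X,\Gamma}$.
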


\begin{defn}\label{defn:Z}
Suppose that $\Gamma$ is abelian.
We define
\begin{equation}\label{eq:Zdet1}
Z_{X, \Gamma} = \det_{\Z[\Gamma]} ( \cL_{X, \Gamma} \mid \Z[\Gamma] \otimes_{\Z} \Div(X)) \in \Z[\Gamma].
\end{equation}
\end{defn}

Suppose that $\Gamma$ is finite and abelian and that $X(\Gamma)$ is connected.
Then Definition \ref{defn:Z2} gives us an element $Z_{X(\Gamma)/X} \in \Z[\Gamma]$.
It is related to the element $Z_{X, \Gamma}$ simply by
\[
Z_{X(\Gamma)/X} = Z_{X, \Gamma},
\]
thanks to Lemma \ref{lem:Pic_DA}.

\subsection{Profinite coverings}\label{ss:profin}

In this subsection, we consider a voltage graph $(X, \Gamma, \alpha)$ such that $\Gamma$ is profinite.

For each open normal subgroup $U$ of $\Gamma$, we have the voltage graph $(X, \Gamma/U, \alpha_{/U})$, where $\alpha_{/U}$ is the composite map of $\alpha$ and the natural projection $\Gamma \to \Gamma/U$.
Therefore, we have the associated derived graph $X(\Gamma/U)$, on which $\Gamma/U$ acts.

Even though $X(\Gamma)$ is not defined unless $\Gamma$ is finite, let us write $X(\Gamma)$ to mean the family $\{X(\Gamma/U)\}_U$.
Then $X(\Gamma)$ can be regarded as an infinite covering of $X$.
For instance, suppose that $\Gamma$ is isomorphic to $\Z_p = \varprojlim_n \Z/p^n\Z$ as a topological group.
Then the open subgroups of $\Gamma$, written multiplicatively, are $\Gamma^{p^n}$ for each $n \geq 0$.
Thus $X_{\infty} = X(\Gamma)$ is the collection of $X_n = X(\Gamma/\Gamma^{p^n})$ for $n \geq 0$ in this case.
This family is illustrated as a tower of coverings
\[
X = X_0 \leftarrow X_1 \leftarrow X_2 \leftarrow \cdots.
\]
We call such an $X_{\infty}/X$ a $\Z_p$-covering.
This is regarded as an analogue of $\Z_p$-extensions of number fields in Iwasawa theory; see \S \ref{sec:Iwa} for more on this theme.

For simplicity, in the rest of this subsection, we will always assume the following:

\begin{ass}\label{ass:conn}
For any open normal subgroup $U$ of $\Gamma$, the derived graph $X(\Gamma/U)$ is connected.
\end{ass}

As we will review in the proof of the following lemma, we have an equivalent condition for the derived graph to be connected, and that  imposes a restriction to the structure of $\Gamma$ as follows.

\begin{lem}\label{lem:decomp_fg}
If Assumption \ref{ass:conn} holds, then the group $\Gamma$ is finitely generated as a profinite group.
Indeed, it is generated by $\# E_X - \# V_X + 1$ elements (recall $\# E_X = \frac{1}{2} \cdot \# \bE_X$).
\end{lem}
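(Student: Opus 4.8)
The key point is to recall and use the standard combinatorial criterion for connectedness of a derived graph: for a voltage graph $(X, \Gamma, \alpha)$ with $\Gamma$ finite, the derived graph $X(\Gamma)$ is connected if and only if the subgroup of $\Gamma$ generated by the ``cycle voltages'' is all of $\Gamma$. More precisely, fix a spanning tree $\cT$ of $X$; then $X(\Gamma)$ is connected if and only if $\Gamma$ is generated by the elements $\alpha(e)$ as $e$ ranges over the edges not in $\cT$ (equivalently, over a set of fundamental cycles of $X$ relative to $\cT$). The number of such edges is $\# E_X - (\# V_X - 1) = \# E_X - \# V_X + 1$, which is the claimed bound. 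First I would state this criterion (with a reference to \cite{Gon22}, \cite{MV21b}, or \cite{RV22}), sketching the short argument: a path in $X(\Gamma)$ lifting a closed path in $X$ based at a fixed vertex lands at a translate of the starting fiber point by the product of the voltages along the path, so the vertices reachable from $(1, v_0)$ are exactly $(g, v_0)$ for $g$ in the subgroup generated by all closed-path voltages, and by the spanning-tree reduction this subgroup is generated by the fundamental-cycle voltages.

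\textbf{Passing to the profinite case.} Now suppose Assumption \ref{ass:conn} holds, so $X(\Gamma/U)$ is connected for every open normal subgroup $U$. Fix once and for all a spanning tree $\cT$ of $X$ and let $e_1, \dots, e_r$ be the edges of $X$ not in $\cT$, where $r = \# E_X - \# V_X + 1$ (choosing one representative in $\bE_X$ for each). Let $g_i = \alpha(e_i) \in \Gamma$ and let $H \subset \Gamma$ be the closed subgroup topologically generated by $g_1, \dots, g_r$. For each open normal $U$, the voltage graph $(X, \Gamma/U, \alpha_{/U})$ has fundamental-cycle voltages the images $\overline{g_i}$ of $g_i$ in $\Gamma/U$; since $X(\Gamma/U)$ is connected, these generate $\Gamma/U$. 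Hence the image of $H$ in $\Gamma/U$ is all of $\Gamma/U$, i.e.\ $HU = \Gamma$ for every open normal $U$.

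\textbf{Concluding.} Since $H$ is closed and $\Gamma = \varprojlim_U \Gamma/U$, the condition $HU = \Gamma$ for all open normal $U$ forces $H = \Gamma$: indeed $H = \bigcap_U HU = \bigcap_U \Gamma = \Gamma$, using that a closed subgroup of a profinite group equals the intersection of the open subgroups containing it (applied here to the subsets $HU$, which are open and contain $H$). Therefore $\Gamma$ is topologically generated by $g_1, \dots, g_r$, proving both that $\Gamma$ is finitely generated as a profinite group and that it is generated by $\# E_X - \# V_X + 1$ elements.

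\textbf{Main obstacle.} The only real content is the connectedness criterion for derived graphs; once that is in hand the profinite limit argument is routine. So the step I expect to require the most care — or at least the most careful citation — is establishing (or quoting precisely) that connectedness of $X(\Gamma/U)$ is equivalent to the fundamental-cycle voltages generating $\Gamma/U$, together with the identification of the number of fundamental cycles as $\# E_X - \# V_X + 1$ (the first Betti number of $X$). Everything after that is a short exercise with inverse limits of profinite groups.
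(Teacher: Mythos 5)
Your proof is correct and takes essentially the same route as the paper: both rely on the connectedness criterion for derived graphs (that $X(\Gamma/U)$ is connected iff the voltages generate $\Gamma/U$, i.e.\ the map $\pi_1(X,v_0)\to\Gamma/U$ is surjective, as in \cite[Theorem 2.11]{RV22}), both identify the generating set with the $\# E_X - \# V_X + 1$ non-tree edges (the paper phrases this via the free fundamental group, you via fundamental cycles), and both conclude by a density-in-profinite-quotients argument. Your write-up merely spells out the closed-subgroup step $H=\bigcap_U HU$ explicitly, where the paper simply remarks that the image of $\pi_1(X,v_0)$ must be dense.
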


\begin{proof}
Let $\pi_1(X, v_0)$ denote the fundamental group of $X$ with an arbitrarily fixed base point $v_0 \in V_X$.
It is known that $\pi_1(X, v_0)$ is a free group on $\# E_X - \# V_X + 1$ elements.
Moreover, the condition that the derived graph $X(\Gamma/U)$ is connected is equivalent to that the group homomorphism $\pi_1(X, v_0) \to \Gamma/U$ induced by $\alpha_{/U}$ is surjective (see \cite[Theorem 2.11]{RV22} for instance).
This implies that Assumption \ref{ass:conn} is equivalent to that the image of the group homomorphism $\pi_1(X, v_0) \to \Gamma$ is dense in $\Gamma$.
Therefore, we obtain the lemma.
\end{proof}

In order to guarantee the exactness of inverse limits, we change the coefficient ring from $\Z$ to a compact $\Z$-algebra $\Lambda$ that is flat over $\Z$ (i.e., torsion-free as a $\Z$-module).
Fundamental examples of $\Lambda$ include $\hZ$ and $\Z_p$ for a prime number $p$, where $\hZ$ (resp.~$\Z_p$) is the profinite (resp.~pro-$p$) completion of $\Z$.

\begin{defn}
We define 
\[
\Jac_{\Lambda}(X(\Gamma)) = \varprojlim_{U} \left(\Lambda \otimes_{\Z} \Jac(X(\Gamma/U)) \right),
\]
where $U$ runs over the open normal subgroups of $\Gamma$.
This is a module over the completed group ring
\[
\Lambda[[\Gamma]] = \varprojlim_U \Lambda[\Gamma/U].
\]
We also define $\Pic_{\Lambda}(X(\Gamma))$ in the same way.
\end{defn}

Let us observe several propositions about these Jacobian groups and Picard groups.

\begin{prop}\label{prop:JP_ex_inf}
We have an exact sequence of $\Lambda[[\Gamma]]$-modules
\[
0 \to \Jac_{\Lambda}(X(\Gamma)) \to \Pic_{\Lambda}(X(\Gamma)) \to \Lambda \to 0.
\]
\end{prop}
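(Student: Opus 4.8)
The plan is to obtain the desired sequence by taking the inverse limit of the finite-level exact sequences \eqref{eq:JP_ex} (for the graphs $X(\Gamma/U)$), after tensoring with $\Lambda$. First I would record, for each open normal subgroup $U \trianglelefteq \Gamma$, the short exact sequence
\[
0 \to \Jac(X(\Gamma/U)) \to \Pic(X(\Gamma/U)) \overset{\deg}{\to} \Z \to 0
\]
from \eqref{eq:JP_ex}, valid because Assumption \ref{ass:conn} guarantees $X(\Gamma/U)$ is connected. Applying $\Lambda \otimes_{\Z} (-)$ and using that $\Lambda$ is flat over $\Z$ (torsion-free, hence flat since $\Z$ is a PID), the sequence stays exact:
\[
0 \to \Lambda \otimes_{\Z} \Jac(X(\Gamma/U)) \to \Lambda \otimes_{\Z} \Pic(X(\Gamma/U)) \overset{\deg}{\to} \Lambda \to 0.
\]
These assemble into an inverse system indexed by the $U$'s, with transition maps induced by the covering maps $X(\Gamma/U') \to X(\Gamma/U)$ for $U' \subseteq U$ (pushforward on divisors, which is compatible with $\cL$, $\deg$, and hence descends to $\Jac$ and $\Pic$).

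Next I would take $\varprojlim_U$ of this inverse system of short exact sequences. The left and middle terms give exactly $\Jac_{\Lambda}(X(\Gamma))$ and $\Pic_{\Lambda}(X(\Gamma))$ by definition, and the right term is $\varprojlim_U \Lambda = \Lambda$ (the transition maps on the degree-$\Z$ quotients are the identity, or at worst isomorphisms, so the limit is just $\Lambda$ — one should check the transition map on degree is the identity, which it is, since pushing forward a degree-one divisor along a degree-$d$ covering and then projecting to $\Z$... actually the pushforward multiplies degree by $1$ on the nose because $\pi_*[w] = [v]$ vertex-by-vertex only after one notes $\deg \circ \pi_* = \deg$; this is immediate from the definition of the embedding $\Div(X) \hookrightarrow \Div(Y)$ used in Proposition \ref{prop:NormY}, read in the other direction). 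The key point that makes $\varprojlim$ exact on the left is the Mittag-Leffler condition: the terms $\Lambda \otimes_{\Z} \Jac(X(\Gamma/U))$ are finite (Theorem \ref{thm:Kirch} shows $\Jac(X(\Gamma/U))$ is finite, and $\Lambda \otimes_{\Z} \Jac(X(\Gamma/U))$ is a quotient, hence finite if $\Lambda/\ell\Lambda$ is finite for the relevant primes $\ell$ — which holds for $\Lambda = \hat{\Z}$ or $\Z_p$; more robustly, $\Lambda \otimes_{\Z} \Jac(X(\Gamma/U))$ is a finitely generated module over the finite ring $\Jac(X(\Gamma/U))$'s... ) — in any case each $\Lambda \otimes_{\Z} \Jac(X(\Gamma/U))$ is a finite abelian group, so the inverse system trivially satisfies Mittag-Leffler and $\varprojlim^1$ vanishes.

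The main obstacle I anticipate is being careful about the finiteness/compactness hypotheses needed to make $\varprojlim$ exact: one must confirm that $\Lambda \otimes_{\Z} \Jac(X(\Gamma/U))$ is a finite group (or at least that the inverse system is Mittag-Leffler), which in turn rests on $\Lambda/n\Lambda$ being finite for every $n$ — true for the ``fundamental examples'' $\hat{\Z}$, $\Z_p$ named in the text, and this is presumably folded into the standing hypothesis ``compact $\Z$-algebra flat over $\Z$'' via compactness. Given that, the $\varprojlim^1$ term vanishes and exactness on the left is automatic; everything else (flatness giving exactness before the limit, the degree quotient limiting to $\Lambda$, $\Lambda[[\Gamma]]$-linearity of all maps since each finite-level map is $\Z[\Gamma/U]$-linear and the maps are compatible with the transition maps) is routine. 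I would therefore organize the write-up as: (i) finite-level sequence from \eqref{eq:JP_ex}; (ii) tensor with flat $\Lambda$; (iii) note compatibility of transition maps, hence an inverse system of short exact sequences; (iv) invoke Mittag-Leffler (finiteness of the $\Jac$ terms) to conclude $\varprojlim$ preserves exactness; (v) identify the three limit terms.
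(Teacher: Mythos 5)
Your plan follows the same road as the paper: start from the finite-level sequence \eqref{eq:JP_ex} for each $X(\Gamma/U)$, use flatness of $\Lambda$ over $\Z$ to keep it exact after base change, check the transition maps (pushforward) are compatible and induce the identity on the degree quotient, and then pass to $\varprojlim$. The only point where you diverge from the paper is the justification for left-exactness of $\varprojlim$. You invoke the Mittag--Leffler condition via finiteness of the terms $\Lambda \otimes_{\Z} \Jac(X(\Gamma/U))$, which rests on $\Lambda/n\Lambda$ being finite for all $n$; you yourself notice that this is an extra hypothesis not literally contained in \emph{compact flat $\Z$-algebra} (it holds for $\hZ$ and $\Z_p$, but fails for, say, $\Lambda = \prod_{i \in I} \Z_p$ with $I$ infinite, which is compact and $\Z$-flat yet has $\Lambda/p\Lambda$ infinite). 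The paper instead just says \emph{as $\Lambda$ is compact}: the relevant fact is that $\varprojlim$ is exact on inverse systems of compact Hausdorff abelian groups with continuous homomorphisms, and here each $\Lambda \otimes_{\Z} \Jac(X(\Gamma/U))$ is compact as a Hausdorff quotient of some $\Lambda^n$. Compactness is what the standing hypothesis is designed to supply, and it covers the general case directly; your finiteness argument covers the announced fundamental examples but is strictly narrower. I would replace the Mittag--Leffler step by this compactness argument (or add the finiteness of $\Lambda/n\Lambda$ as an explicit standing assumption if you prefer your route) and otherwise keep the structure as is.
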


\begin{proof}
For each open normal subgroup $U$ of $\Gamma$, we have the exact sequence \eqref{eq:JP_ex} for $X(\Gamma/U)$.
Then, as $\Lambda$ is compact, by taking the limit (after base change from $\Z$ to $\Lambda$), we obtain the claimed exact sequence.
\end{proof}

We define an ideal $\GamLambda$ of $\Lambda$ by
\[
\GamLambda = \bigcap_{U} [\Gamma: U] \Lambda,
\]
where $U$ runs over all open normal subgroups of $\Gamma$.
We often have $\GamLambda = 0$ (see Remark \ref{rem:Lap_inj}).

Recall that we have the endomorphism $\cL_{X, \Gamma}$ on $\Z[\Gamma] \otimes_{\Z} \Div(X)$ (Definition \ref{defn:AGamma}).
Let the same symbol denote the base change to $\Lambda[[\Gamma]] \otimes_{\Z} \Div(X)$.
Also recall that $Z_{X, \Gamma} \in \Z[\Gamma] \subset \Lambda[[\Gamma]]$ is defined in Definition \ref{defn:Z}.

\begin{prop}\label{prop:Pic_ex_inf}
We have an exact sequence of $\Lambda[[\Gamma]]$-modules
\begin{equation}\label{eq:Pic_ex}
0 \to \GamLambda \to \Lambda[[\Gamma]] \otimes_{\Z} \Div(X) \overset{\cL_{X, \Gamma}}{\to} \Lambda[[\Gamma]] \otimes_{\Z} \Div(X)
\to \Pic_{\Lambda}(X(\Gamma)) \to 0.
\end{equation}
In particular, when $\Gamma$ is abelian, we have
\[
\Fitt_{\Lambda[[\Gamma]]}(\Pic_{\Lambda}(X(\Gamma)))
= (Z_{X, \Gamma}).
\]
\end{prop}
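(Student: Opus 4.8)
The plan is to reduce everything to the finite-level statement in Lemma \ref{lem:Pic_DA} and then pass to the limit, using the flatness of $\Lambda$ over $\Z$ to control the inverse system. First I would fix an open normal subgroup $U$ of $\Gamma$ and apply Lemma \ref{lem:Pic_DA} to the finite voltage graph $(X, \Gamma/U, \alpha_{/U})$: this gives that $\Pic(X(\Gamma/U))$ is the cokernel of $\cL_{X, \Gamma/U}$ acting on $\Z[\Gamma/U] \otimes_{\Z} \Div(X)$. Tensoring with $\Lambda$ over $\Z$ (which is exact since $\Lambda$ is $\Z$-flat) yields
\[
\Lambda[\Gamma/U] \otimes_{\Z} \Div(X) \overset{\cL_{X, \Gamma/U}}{\to} \Lambda[\Gamma/U] \otimes_{\Z} \Div(X) \to \Lambda \otimes_{\Z} \Pic(X(\Gamma/U)) \to 0.
\]
The kernel of $\cL_{X, \Gamma/U}$ at finite level is easy to identify: arguing as in Lemma \ref{lem:cpx_J} (or directly), $\Ker(\cL_{X(\Gamma/U)})$ on $\Div(X(\Gamma/U))$ is the saturated rank-one submodule generated by $\sum_{w} [w]$, and after $\otimes_{\Z}\Lambda$ and transporting via the isomorphism of Lemma \ref{lem:Pic_DA}, the kernel of $\cL_{X,\Gamma/U}$ on $\Lambda[\Gamma/U]\otimes_{\Z}\Div(X)$ is $\Lambda \cdot N_{\Gamma/U}$, where $N_{\Gamma/U}$ is the norm element; this is isomorphic to $\Lambda$ as a $\Lambda$-module.

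Next I would take the inverse limit over $U$. The modules $\Lambda[\Gamma/U]\otimes_{\Z}\Div(X)$ form a surjective inverse system of compact modules, so $\varprojlim$ is exact on the relevant three-term pieces; the limit of the middle and left-hand terms is $\Lambda[[\Gamma]]\otimes_{\Z}\Div(X)$ (here $\Div(X)$ is finite free over $\Z$, so the tensor commutes with the limit), and the limit of the cokernels is $\Pic_{\Lambda}(X(\Gamma))$ by definition. This produces the exact sequence
\[
\varprojlim_U \Ker(\cL_{X,\Gamma/U}) \to \Lambda[[\Gamma]] \otimes_{\Z} \Div(X) \overset{\cL_{X, \Gamma}}{\to} \Lambda[[\Gamma]] \otimes_{\Z} \Div(X) \to \Pic_{\Lambda}(X(\Gamma)) \to 0,
\]
and it remains to compute the leftmost term. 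The transition map in the system $\{\Ker(\cL_{X,\Gamma/U})\}_U = \{\Lambda \cdot N_{\Gamma/U}\}_U$ from level $U' \subseteq U$ to level $U$ sends $N_{\Gamma/U'}$ to $[U:U']\cdot N_{\Gamma/U}$ (the norm element pushes forward with multiplicity the index), so under the identifications $\Lambda\cdot N_{\Gamma/U}\simeq\Lambda$ the system is $\{\Lambda\}_U$ with transition maps given by multiplication by the indices $[U:U']$. Its inverse limit is precisely $\bigcap_U [\Gamma:U]\Lambda = \GamLambda$ sitting inside $\Lambda \cdot \varprojlim N_{\Gamma/U}$, and I would check that the resulting map into $\Lambda[[\Gamma]]\otimes_{\Z}\Div(X)$ is injective (it is a limit of injections of finite free $\Z$-modules tensored with flat $\Lambda$, hence each term injects, and injectivity is preserved by $\varprojlim$). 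This gives the four-term exact sequence \eqref{eq:Pic_ex}. Finally, when $\Gamma$ is abelian, $\cL_{X,\Gamma}$ is a square matrix over $\Lambda[[\Gamma]]$ (of size $\#V_X$) presenting $\Pic_{\Lambda}(X(\Gamma))$ up to the harmless left term $\GamLambda$; since $\GamLambda$ lies in the kernel and does not affect the presentation of the cokernel, the Fitting ideal of $\Pic_{\Lambda}(X(\Gamma))$ is the ideal generated by $\det_{\Lambda[[\Gamma]]}(\cL_{X,\Gamma}) = Z_{X,\Gamma}$ by Definition \ref{defn:Z}.

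The main obstacle I anticipate is the bookkeeping at the inverse limit: one must be careful that $\varprojlim$ is genuinely exact here (this uses compactness of the finite-level modules, i.e.\ the Mittag-Leffler condition is automatic for surjective systems of compact modules) and that the identification of $\varprojlim_U \Ker(\cL_{X,\Gamma/U})$ with $\GamLambda$ is correct, including the precise multiplicity $[U:U']$ in the transition maps and the verification that this leftmost term maps injectively into $\Lambda[[\Gamma]]\otimes_{\Z}\Div(X)$ rather than merely mapping to it with some kernel. Everything else — the finite-level cokernel description, the flatness argument, and the passage from a square presentation matrix to the Fitting ideal — is routine given Lemma \ref{lem:Pic_DA}, Proposition \ref{prop:JP_ex_inf}, and Definition \ref{defn:Z}.
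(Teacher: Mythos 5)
Your proposal is correct and takes essentially the same route as the paper's own proof: reduce to the finite quotients $\Gamma/U$ via Lemma \ref{lem:Pic_DA} (together with Lemma \ref{lem:cpx_J} for the kernel), tensor with the $\Z$-flat ring $\Lambda$, identify the transition maps on the kernel as multiplication by the index, and pass to the inverse limit using compactness of $\Lambda$. The identification of $\varprojlim_U \Ker(\cL_{X,\Gamma/U})$ with $\GamLambda$ and the Fitting-ideal consequence are exactly as in the paper.
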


\begin{proof}
For each open normal subgroup $U$ of $\Gamma$, we may apply Lemmas \ref{lem:cpx_J} and \ref{lem:Pic_DA} for $X(\Gamma/U)$.
Since $\Lambda$ is flat over $\Z$, we obtain an exact sequence
\[
0 \to \Lambda \overset{\iota_{X, \Gamma/U}}{\to} \Lambda[\Gamma/U] \otimes_{\Z} \Div(X) \overset{\cL_{X, \Gamma/U}}{\to} \Lambda[\Gamma/U] \otimes_{\Z} \Div(X) \to \Pic_{\Lambda}(X(\Gamma/U)) \to 0,
\]
where $\iota_{X, \Gamma/U}$ denotes the $\Lambda$-homomorphism that sends $1$ to $N_{\Gamma/U} \otimes \left(\sum_{v \in V_X} [v] \right)$ with $N_{\Gamma/U}$ the norm element.
When we are given another open normal subgroup $V$ of $\Gamma$ such that $V \subset U$, the exact sequences satisfy the following commutative diagram
\[
\xymatrix{
	0 \ar[r]
	& \Lambda \ar[r]^-{\iota_{X, \Gamma/V}} \ar@{^(->}[d]_-{\times [U:V]}
	& \Lambda[\Gamma/V] \otimes_{\Z} \Div(X) \ar[r]^-{\cL_{X, \Gamma/V}} \ar@{->>}[d]
	& \Lambda[\Gamma/V] \otimes_{\Z} \Div(X) \ar[r] \ar@{->>}[d]
	& \Pic_{\Lambda}(X(\Gamma/V)) \ar[r] \ar@{->>}[d]
	& 0\\
	0 \ar[r]
	& \Lambda \ar[r]^-{\iota_{X, \Gamma/U}}
	& \Lambda[\Gamma/U] \otimes_{\Z} \Div(X) \ar[r]^-{\cL_{X, \Gamma/U}}
	& \Lambda[\Gamma/U] \otimes_{\Z} \Div(X) \ar[r]
	& \Pic_{\Lambda}(X(\Gamma/U)) \ar[r]
	& 0.
}
\]
As $\Lambda$ is compact, by taking the limit, we obtain the claimed exact sequence.
\end{proof}

\begin{cor}\label{cor:Pic_coinv}
Let $\Gamma'$ be a closed normal subgroup of $\Gamma$ and consider the induced voltage graph $(X, \Gamma/\Gamma', \alpha_{/\Gamma'})$, where $\alpha_{/\Gamma'}$ denotes the composition of $\alpha$ and the projection map $\Gamma \to \Gamma/\Gamma'$.
Then we have an isomorphism of $\Lambda[[\Gamma/\Gamma']]$-modules
\[
\Pic_{\Lambda}(X(\Gamma))_{\Gamma'} \simeq \Pic_{\Lambda}(X(\Gamma/\Gamma')).
\]
\end{cor}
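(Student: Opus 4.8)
The plan is to obtain the corollary purely formally from Proposition~\ref{prop:Pic_ex_inf} by taking $\Gamma'$-coinvariants. Throughout, write $\ol{\Gamma} = \Gamma/\Gamma'$, and read $\Pic_{\Lambda}(X(\Gamma))_{\Gamma'}$ as $\Lambda[[\ol{\Gamma}]] \otimes_{\Lambda[[\Gamma]]} \Pic_{\Lambda}(X(\Gamma))$. The key observation is that the right exact functor $\Lambda[[\ol{\Gamma}]] \otimes_{\Lambda[[\Gamma]]} (-)$, applied to the right-hand portion
\[
\Lambda[[\Gamma]] \otimes_{\Z} \Div(X) \overset{\cL_{X, \Gamma}}{\to} \Lambda[[\Gamma]] \otimes_{\Z} \Div(X) \to \Pic_{\Lambda}(X(\Gamma)) \to 0
\]
of the exact sequence \eqref{eq:Pic_ex}, presents $\Pic_{\Lambda}(X(\Gamma))_{\Gamma'}$ as the cokernel of the base-changed map. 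Here one uses the natural identification $\Lambda[[\ol{\Gamma}]] \otimes_{\Lambda[[\Gamma]]} (\Lambda[[\Gamma]] \otimes_{\Z} \Div(X)) \simeq \Lambda[[\ol{\Gamma}]] \otimes_{\Z} \Div(X)$ (valid since $\Div(X)$ is a finitely generated free $\Z$-module), under which $\cL_{X, \Gamma}$ induces precisely $\cL_{X, \ol{\Gamma}}$; this last point is immediate from $A_{X, \Gamma}(1 \otimes [v]) = \sum_{e \in \bE_{X, v}} \alpha(e) \otimes [t(e)]$, since $\alpha_{/\Gamma'}(e)$ is by definition the image of $\alpha(e)$ under $\Gamma \to \ol{\Gamma}$.

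I would then apply Proposition~\ref{prop:Pic_ex_inf} directly to the quotient voltage graph $(X, \ol{\Gamma}, \alpha_{/\Gamma'})$, which exhibits $\Pic_{\Lambda}(X(\ol{\Gamma}))$ as the cokernel of the very same homomorphism $\cL_{X, \ol{\Gamma}}$ on $\Lambda[[\ol{\Gamma}]] \otimes_{\Z} \Div(X)$. This requires first checking that $(X, \ol{\Gamma}, \alpha_{/\Gamma'})$ still satisfies Assumption~\ref{ass:conn}: the open normal subgroups of $\ol{\Gamma}$ are exactly the subgroups $U/\Gamma'$ with $U$ an open normal subgroup of $\Gamma$ containing $\Gamma'$, and the corresponding derived graph is canonically identified with $X(\Gamma/U)$, hence connected by Assumption~\ref{ass:conn} for $\Gamma$. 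Comparing the two presentations then gives a canonical isomorphism $\Pic_{\Lambda}(X(\Gamma))_{\Gamma'} \simeq \Pic_{\Lambda}(X(\ol{\Gamma}))$, and since every homomorphism occurring is $\Lambda[[\ol{\Gamma}]]$-linear, this is an isomorphism of $\Lambda[[\ol{\Gamma}]]$-modules.

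There is no serious obstacle: the statement is essentially formal given Proposition~\ref{prop:Pic_ex_inf}. The two points deserving care are (i) the inheritance of Assumption~\ref{ass:conn} by the quotient voltage graph, needed so that Proposition~\ref{prop:Pic_ex_inf} may legitimately be applied to $(X, \ol{\Gamma}, \alpha_{/\Gamma'})$, and (ii) the bookkeeping that right-exact base change along $\Lambda[[\Gamma]] \to \Lambda[[\ol{\Gamma}]]$ carries $\cL_{X, \Gamma}$ to $\cL_{X, \ol{\Gamma}}$ and the cokernel of \eqref{eq:Pic_ex} to the $\Gamma'$-coinvariants. Note that the kernel term $\GamLambda$ on the left of \eqref{eq:Pic_ex} plays no role, since only the right-exact tail of the sequence is used; in particular no flatness hypothesis is needed.
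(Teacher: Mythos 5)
Your proof is correct and is essentially the same as the paper's, which simply compares the presentations of the two Picard modules furnished by Proposition~\ref{prop:Pic_ex_inf} for $(X,\Gamma,\alpha)$ and $(X,\Gamma/\Gamma',\alpha_{/\Gamma'})$. You have supplied the details the paper leaves implicit (inheritance of Assumption~\ref{ass:conn} and the identification of the base-changed Laplacian), but the route is identical.
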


\begin{proof}
This follows immediately by comparing the exact sequences obtained by Proposition \ref{prop:Pic_ex_inf} applied to the voltage graphs $(X, \Gamma, \alpha)$ and $(X, \Gamma/\Gamma', \alpha_{/\Gamma'})$.
\end{proof}

Recall that a module over a commutative ring is said to be torsion if any element is annihilated by a non-zero-divisor.
By Proposition \ref{prop:Pic_ex_inf}, taking Lemma \ref{lem:tors_gen} into account, we immediately obtain the following.

\begin{cor}\label{cor:inj}
The following are equivalent.
\begin{itemize}
\item[(i)]
We have $\GamLambda = 0$.
\item[(ii)]
The endomorphism $\cL_{X, \Gamma}$ on $\Lambda[[\Gamma]] \otimes_{\Z} \Div(X)$ is injective.
\item[(iii)]
The element $Z_{X, \Gamma}$ of $\Lambda[[\Gamma]]$ is a non-zero-divisor.
\item[(iv)]
The $\Lambda[[\Gamma]]$-module $\Pic_{\Lambda}(X(\Gamma))$ is torsion.
\end{itemize}
\end{cor}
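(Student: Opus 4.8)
The plan is to deduce everything from the four-term exact sequence \eqref{eq:Pic_ex} of Proposition \ref{prop:Pic_ex_inf} together with the auxiliary Lemma \ref{lem:tors_gen} (on torsion-ness of cokernels of endomorphisms of finite free modules). First I would set $F = \Lambda[[\Gamma]] \otimes_{\Z} \Div(X)$, a finite free $\Lambda[[\Gamma]]$-module, and regard $\cL_{X, \Gamma}$ as an endomorphism of $F$ with $\det = Z_{X, \Gamma}$ and cokernel $\Pic_{\Lambda}(X(\Gamma))$ and kernel $\GamLambda$ (embedded appropriately). The implications will then be arranged in a cycle, say (i) $\Rightarrow$ (ii) $\Rightarrow$ (iii) $\Rightarrow$ (iv) $\Rightarrow$ (i).

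For (i) $\Rightarrow$ (ii): if $\GamLambda = 0$, then the exact sequence \eqref{eq:Pic_ex} immediately shows $\Ker(\cL_{X, \Gamma}) = \GamLambda = 0$, i.e. $\cL_{X, \Gamma}$ is injective. Conversely (ii) $\Rightarrow$ (i) is equally immediate, so really (i) and (ii) are tautologically equivalent via the exact sequence; I would just record this. For (ii) $\Leftrightarrow$ (iii): this is the standard fact that an endomorphism of a finite free module over a commutative ring is injective if and only if its determinant is a non-zero-divisor. One direction (non-zero-divisor determinant $\Rightarrow$ injective) follows from multiplying by the adjugate matrix: $\mathrm{adj}(\cL_{X, \Gamma}) \circ \cL_{X, \Gamma} = Z_{X, \Gamma} \cdot \id_F$, so $\cL_{X, \Gamma}(x) = 0$ forces $Z_{X, \Gamma} x = 0$ hence $x = 0$. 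The other direction (injective $\Rightarrow$ determinant is a non-zero-divisor) is where some care is needed over a non-noetherian ring: the usual argument uses that if $a \cdot Z_{X, \Gamma} = 0$ with $a \neq 0$ then $a \cdot \cL_{X, \Gamma}$, viewed via Cramer/McCoy, has nontrivial kernel; the clean statement is McCoy's theorem, that $\det M$ is a non-zero-divisor iff $M$ is injective as a map between free modules of the same rank, valid over any commutative ring. I would cite or quickly reprove this. Finally (iii) $\Leftrightarrow$ (iv): by \eqref{eq:Pic_ex} (with $\GamLambda$ now understood to be handled by the above), $\Pic_{\Lambda}(X(\Gamma))$ is the cokernel of an injective endomorphism of $F$ with determinant $Z_{X, \Gamma}$, so Lemma \ref{lem:tors_gen} says this cokernel is a torsion module precisely when $Z_{X, \Gamma}$ (equivalently $\det \cL_{X, \Gamma}$) is a non-zero-divisor; and in the presence of the already-established equivalences, injectivity of $\cL_{X, \Gamma}$ holds, so the hypothesis of Lemma \ref{lem:tors_gen} is available.

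To keep the cycle clean I would actually run it as: (i) $\Leftrightarrow$ (ii) directly from \eqref{eq:Pic_ex}; (ii) $\Leftrightarrow$ (iii) from the adjugate identity in one direction and McCoy's theorem in the other; then (iv) follows from (ii)+(iii) via Lemma \ref{lem:tors_gen}, and conversely if $\Pic_{\Lambda}(X(\Gamma))$ is torsion then, were $\GamLambda \neq 0$, the exact sequence \eqref{eq:Pic_ex} would exhibit a nonzero submodule $\GamLambda$ of the torsion-free module $F$ mapping into $F$ with torsion cokernel, which is impossible since a nonzero ideal of $\Lambda$ annihilated by a non-zero-divisor cannot exist inside the $\Z$-torsion-free, hence non-zero-divisor-torsion-free, ring $\Lambda$... — here I would instead argue more simply: (iv) $\Rightarrow$ (iii) because if $Z_{X,\Gamma}$ were a zero-divisor, $\mathrm{Ann}(Z_{X,\Gamma}) \neq 0$ gives a nonzero element killed by a non-zero-divisor inside a torsion module is fine, but what breaks torsion-ness is that this annihilator element, pulled back, contradicts nothing directly; the cleanest route for (iv) $\Rightarrow$ (i) is: torsion cokernel of $\cL_{X, \Gamma}: F \to F$ forces $\cL_{X, \Gamma}$ to have trivial kernel (a nonzero kernel element would be a non-zero-divisor-torsion-free summand obstructing torsion-ness of the cokernel — precisely the content of Lemma \ref{lem:tors_gen}), hence $\GamLambda = 0$.

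The main obstacle I anticipate is purely the non-noetherian subtlety in the equivalence (ii) $\Leftrightarrow$ (iii): over a general compact $\Z$-flat $\Lambda[[\Gamma]]$ one cannot invoke the familiar noetherian statements, so I would need either McCoy's theorem in full generality or a direct ad hoc argument using the adjugate matrix in both directions. Everything else is formal manipulation of the single exact sequence \eqref{eq:Pic_ex} and an appeal to Lemma \ref{lem:tors_gen}; I expect the total write-up to be short.
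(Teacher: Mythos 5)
Your proposal is correct and follows essentially the same route as the paper: identify the four conditions with the four corners of the exact sequence \eqref{eq:Pic_ex} from Proposition \ref{prop:Pic_ex_inf} and then apply Lemma \ref{lem:tors_gen}. The only superfluous effort is your separate re-derivation of $(ii)\Leftrightarrow(iii)$ via adjugates and McCoy's theorem; the paper has already packaged precisely that (including the non-noetherian case, via the Bourbaki reference) inside Lemma \ref{lem:tors_gen}, so once you invoke that lemma the McCoy worry is already discharged and no extra argument is needed.
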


\begin{rem}\label{rem:Lap_inj}
In \S \ref{sec:Fitt_infin}, we basically consider the case where the equivalent conditions in Corollary \ref{cor:inj} hold.
For instance, when $\Lambda = \hZ$, we have $\GamLambda = 0$ if and only if the order of $\Gamma$ is divisible by any positive integers.
Here, in general, we say that the order of a profinite group $\Gamma$ is divisible by a positive integer $n$ if there exists an open normal subgroup $U \subset \Gamma$ such that $n \mid [\Gamma: U]$.
Similarly, when $\Lambda = \Z_p$, we have $\GamLambda = 0$ if and only if the order of $\Gamma$ is divisible by any $p$-power (i.e., divisible by $p^{\infty}$).
\end{rem}

Now let us mention that we can apply our observations so far to naturally reprove the analogue of the Iwasawa main conjecture for multiple $\Z_p$-coverings that is shown by Kleine--M\"uller \cite{KM22}, as follows.

\begin{thm}[{Kleine--M\"uller \cite[Theorem 5.2 and Remark 5.3]{KM22}}]\label{thm:IMC}
Let us assume that $\Lambda = \Z_p$ and $\Gamma$ is isomorphic to $\Z_p^d$ for some $d \geq 1$.
Then the characteristic ideal of $\Jac_{\Z_p}(X(\Gamma))$ is described as
\[
\cha_{\Z_p[[\Gamma]]}(\Jac_{\Z_p}(X(\Gamma)))
= \begin{cases}
(Z_{X, \Gamma}) & (d \geq 2)\\
I(\Gamma)^{-1} (Z_{X, \Gamma}) & (d = 1),
\end{cases}
\]
where $I(\Gamma)$ denotes the augmentation ideal of $\Z_p[[\Gamma]]$.
\end{thm}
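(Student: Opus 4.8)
The plan is to reduce everything to the two exact sequences of \S\ref{sec:pre} and to standard facts about characteristic ideals over the Iwasawa algebra $\Z_p[[\Gamma]] \cong \Z_p[[T_1, \dots, T_d]]$, which is a regular (hence Krull, hence factorial) Noetherian domain. First I would observe that, since $\Gamma \cong \Z_p^d$ with $d \geq 1$, the order of $\Gamma$ is divisible by every $p$-power, so by Remark \ref{rem:Lap_inj} we have $\GamLambda = 0$ in the present situation. Therefore Corollary \ref{cor:inj} and Proposition \ref{prop:Pic_ex_inf} apply: $\Pic_{\Z_p}(X(\Gamma))$ is a finitely generated torsion $\Z_p[[\Gamma]]$-module fitting in a free resolution
\[
0 \to \Z_p[[\Gamma]] \otimes_{\Z} \Div(X) \overset{\cL_{X, \Gamma}}{\to} \Z_p[[\Gamma]] \otimes_{\Z} \Div(X) \to \Pic_{\Z_p}(X(\Gamma)) \to 0
\]
by a finite free module, with $\det_{\Z_p[[\Gamma]]}(\cL_{X,\Gamma}) = Z_{X, \Gamma}$.

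Next I would invoke the standard fact that a finitely generated torsion module over a Noetherian normal domain presented by a square matrix $A$ has characteristic ideal $(\det A)$: one localizes at each height-one prime $\fp$, where $\Z_p[[\Gamma]]_\fp$ is a discrete valuation ring and the localized module has length $v_\fp(\det A)$ by the elementary divisor theorem. Applied to the resolution above, this gives $\cha_{\Z_p[[\Gamma]]}(\Pic_{\Z_p}(X(\Gamma))) = (Z_{X, \Gamma})$. Then I would feed the exact sequence of Proposition \ref{prop:JP_ex_inf},
\[
0 \to \Jac_{\Z_p}(X(\Gamma)) \to \Pic_{\Z_p}(X(\Gamma)) \to \Z_p \to 0,
\]
into the multiplicativity of characteristic ideals for finitely generated torsion modules (note $\Jac_{\Z_p}(X(\Gamma))$, being a submodule of the torsion middle term, is itself finitely generated and torsion), obtaining
\[
(Z_{X, \Gamma}) = \cha_{\Z_p[[\Gamma]]}(\Jac_{\Z_p}(X(\Gamma))) \cdot \cha_{\Z_p[[\Gamma]]}(\Z_p).
\]

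It then remains to compute $\cha_{\Z_p[[\Gamma]]}(\Z_p)$, where $\Z_p = \Z_p[[\Gamma]]/I(\Gamma)$. If $d \geq 2$, the augmentation ideal $I(\Gamma)$ has height $d \geq 2$, so $\Z_p$ is supported in codimension $\geq 2$ (pseudo-null), hence $\cha_{\Z_p[[\Gamma]]}(\Z_p) = (1)$, and we conclude $\cha_{\Z_p[[\Gamma]]}(\Jac_{\Z_p}(X(\Gamma))) = (Z_{X, \Gamma})$. If $d = 1$, choosing a topological generator $\gamma$ and writing $T = \gamma - 1$, we have $I(\Gamma) = (T)$, which is height one, so $\cha_{\Z_p[[\Gamma]]}(\Z_p) = (T) = I(\Gamma)$, and hence $\cha_{\Z_p[[\Gamma]]}(\Jac_{\Z_p}(X(\Gamma))) = I(\Gamma)^{-1}(Z_{X, \Gamma})$. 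In the last case one should also note that this is a genuine integral ideal, i.e.\ $T \mid Z_{X, \Gamma}$: indeed $Z_{X, \Gamma} \in I(\Gamma)$, because specializing $\cL_{X, \Gamma}$ along the augmentation $\Z_p[[\Gamma]] \to \Z_p$ recovers the Laplacian $\cL_X \otimes \Z_p$, whose determinant vanishes since $\cL_X$ kills $\iota_X(1) = \sum_{v \in V_X}[v]$ by Lemma \ref{lem:cpx_J}.

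I do not expect a serious obstacle here: the argument is essentially an assembly of Propositions \ref{prop:JP_ex_inf} and \ref{prop:Pic_ex_inf} with commutative algebra. The only points demanding care are the two standard inputs about characteristic ideals over a regular local ring — that a square presentation yields $\cha = (\det)$ and that $\cha_{\Z_p[[\Gamma]]}(\Z_p)$ is $(1)$ for $d \geq 2$ and $I(\Gamma)$ for $d = 1$ — which rest on the explicit height computation for $I(\Gamma)$ and localization at height-one primes.
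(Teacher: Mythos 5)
Your argument is essentially the same as the paper's: you combine Proposition \ref{prop:JP_ex_inf} with the multiplicativity of characteristic ideals, deduce $\cha(\Pic_{\Z_p}(X(\Gamma))) = (Z_{X,\Gamma})$ from the square presentation in Proposition \ref{prop:Pic_ex_inf}, and compute $\cha_{\Z_p[[\Gamma]]}(\Z_p)$ by cases on $d$ — exactly the paper's route. The extra details you supply (the localization-at-height-one-primes justification of $\cha = (\det)$ and the observation that $T \mid Z_{X,\Gamma}$ when $d=1$) are correct and welcome but do not change the approach.
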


\begin{proof}
The coefficient ring $\Z_p[[\Gamma]]$ is known to be isomorphic to the ring of power series in $d$ indeterminates $\Z_p[[T_1, \dots, T_d]]$.
In particular, $\Z_p[[\Gamma]]$ is a regular local ring, so we have the notion of characteristic ideals $\cha_{\Z_p[[\Gamma]]}(M)$ for finitely generated torsion modules $M$.

By Corollary \ref{cor:inj}, together with Remark \ref{rem:Lap_inj}, the $\Z_p[[\Gamma]]$-module $\Pic_{\Z_p}(X(\Gamma))$ is torsion.
Since the characteristic ideals satisfy the multiplicativity for exact sequences, Proposition \ref{prop:JP_ex_inf} implies
\[
\cha_{\Z_p[[\Gamma]]}(\Pic_{\Z_p}(X(\Gamma)))
= \cha_{\Z_p[[\Gamma]]}(\Z_p) \cdot \cha_{\Z_p[[\Gamma]]}(\Jac_{\Z_p}(X(\Gamma))) .
\]
We have $\cha_{\Z_p[[\Gamma]]}(\Z_p) = (1)$ if $d \geq 2$ (since $\Z_p$ is a pseudo-null module) and $\cha_{\Z_p[[\Gamma]]}(\Z_p) = I(\Gamma)$ if $d = 1$.
Moreover, Proposition \ref{prop:Pic_ex_inf} implies
\[
\cha_{\Z_p[[\Gamma]]}(\Pic_{\Z_p}(X(\Gamma)))
= (Z_{X, \Gamma}).
\]
Thus we obtain the theorem.
\end{proof}

\begin{rem}\label{rem:F_vs_ch}
In \S \ref{sec:Fitt_infin}, we will compute the Fitting ideals rather than the characteristic ideals.
Compared to the characteristic ideals, the notion of Fitting ideals has roughly two advantages.
One is that Fitting ideals are defined over any commutative rings.
The other is that, even when the coefficient ring is a regular local ring, Fitting ideals are more refined than characteristic ideals; indeed, the Fitting ideal determines the characteristic ideal.
Therefore, the main result in \S \ref{sec:Fitt_infin} (stated as Theorems \ref{thm:F_J_inf} and \ref{thm:Fitt_infin}) is a refinement of Theorem \ref{thm:IMC}.
\end{rem}

\subsection{Relation with Ihara zeta functions}\label{ss:Ihara}

In this subsection, we briefly explain the analytic aspect of Jacobian groups.
As illustrated in Theorem \ref{thm:IMC} for instance, the structure of the Jacobian group is closely related to the element $Z_{X, \Gamma}$.
The purpose of this subsection is to explain that $Z_{X, \Gamma}$ has an interpretation using the (equivariant) Ihara zeta function.
This fact may be regarded as an analogue of the conjectural relations between algebraic aspects and analytic aspects for various arithmetic objects in number theory (e.g., the Iwasawa main conjecture, the equivariant Tamagawa number conjecture, etc.).
The results in this subsection are not used in the other parts of this paper.
A nice reference is Terras \cite{Ter11}.

Let $(X, \Gamma, \alpha)$ be a voltage graph such that $\Gamma$ is abelian.
First we define the associated zeta function (see \cite[Chapters 2 and 18]{Ter11}).

\begin{defn}\label{defn:zeta}
We define the (equivariant) Ihara zeta function by
\[
\zeta_{X, \Gamma}(u) = \prod_{[P]} ( 1- \alpha(P) u^{\nu(P)})^{-1} \in \Z[\Gamma][[u]],
\]
where $P$ runs over primitive paths in $X$, $[P]$ denotes the rotation class of $P$, and $\nu(P)$ denotes the length of $P$.
When the path $P$ consists of edges $e_1, \dots, e_n$ ($e_i \in \bE_X$ with $n = \nu(P)$), we define $\alpha(P) = \alpha(e_1) \cdots \alpha(e_n)$.
\end{defn}

Let us define
\[
Z_{X, \Gamma}(u) = \det_{\Z[\Gamma]} ( 1- A_{X, \Gamma} u + (D_X - 1) u^2 \mid \Z[\Gamma] \otimes_{\Z} \Div(X)) \in \Z[\Gamma][u].
\]
By definition we have $Z_{X, \Gamma}(1) = Z_{X, \Gamma}$.
The result is the following.

\begin{thm}[Three-term determinant formula]\label{thm:three}
We have
\[
\zeta_{X, \Gamma}(u)^{-1} = (1- u^2)^{\# E_X - \# V_X} Z_{X, \Gamma}(u).
\]
In particular, $\zeta_{X, \Gamma}(u)$ is a rational function.
\end{thm}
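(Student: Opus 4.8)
The plan is to reduce the equivariant statement to the classical (non-equivariant) three-term determinant formula for a single finite graph, applied to the derived graphs $X(\Gamma/U)$, and then pass to the limit. First I would treat the case where $\Gamma$ is finite. In that case the derived graph $X(\Gamma)$ is a finite graph (a covering of $X$), and the classical Bass--Hashimoto--Ihara formula (see Terras \cite[Chapter 2]{Ter11}) gives
\[
\zeta_{X(\Gamma)}(u)^{-1} = (1-u^2)^{\# E_{X(\Gamma)} - \# V_{X(\Gamma)}} \det\bigl(1 - A_{X(\Gamma)} u + (D_{X(\Gamma)} - 1) u^2 \bigr).
\]
Now $\# E_{X(\Gamma)} = \# \Gamma \cdot \# E_X$ and $\# V_{X(\Gamma)} = \# \Gamma \cdot \# V_X$, so the exponent of $(1-u^2)$ is $\#\Gamma(\# E_X - \# V_X)$. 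On the other hand, via the isomorphism of Lemma \ref{lem:Pic_DA} (extended with the indeterminate $u$), the operator $1 - A_{X(\Gamma)} u + (D_{X(\Gamma)}-1)u^2$ on $\Div(X(\Gamma))$ is identified with $1 - A_{X,\Gamma} u + (D_X - 1)u^2$ on $\Z[\Gamma] \otimes_{\Z} \Div(X)$. Taking the determinant over $\Z$ versus over $\Z[\Gamma]$ introduces a norm: $\det_{\Z}(\cdot) = $ the image of $\det_{\Z[\Gamma]}(\cdot)$ under the regular-representation norm, which on each character $\chi$ of $\Gamma$ evaluates $Z_{X,\Gamma}(u)$ at $\chi$. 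So the Euler-product side must be compared character by character as well: one decomposes $\zeta_{X(\Gamma)}(u)$ into the product over characters $\chi$ of $\Gamma$ of the twisted Artin--Ihara $L$-functions $L_X(u,\chi)$, and identifies $L_X(u,\chi)^{-1}$ with $(1-u^2)^{\#E_X - \#V_X}$ times the $\chi$-component of $Z_{X,\Gamma}(u)$. Since a power series in $\Z[\Gamma][[u]]$ is determined by its values at all characters (after extending scalars to $\C$, and the coefficients are in $\Z[\Gamma]$ which embeds in $\prod_\chi \C$), this yields the equivariant identity for finite abelian $\Gamma$.

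Alternatively — and this is the route I would actually write up, to avoid the character bookkeeping — I would prove the equivariant formula directly by the standard determinant manipulation. One works in $\Z[\Gamma][[u]]$ and uses the equivariant analogue of the edge-zeta / Bass identity: write down the $\#\bE_X \times \#\bE_X$ equivariant edge matrix $W$ built from $\alpha$ and the incidence structure of $X$, show $\zeta_{X,\Gamma}(u)^{-1} = \det_{\Z[\Gamma]}(1 - Wu)$ by the usual expansion of $\log$ of the Euler product into a sum over closed paths (the primitivity/rotation-class combinatorics is formally identical to the classical case, the voltages just multiply along paths), and then perform Bass's block-matrix computation relating $\det(1-Wu)$ to $(1-u^2)^{\#E_X - \#V_X}\det_{\Z[\Gamma]}(1 - A_{X,\Gamma}u + (D_X-1)u^2)$. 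This last step is pure linear algebra over the commutative ring $\Z[\Gamma][[u]]$ and goes through verbatim since nothing there used that the coefficient ring was a field or that $\Gamma$ was trivial.

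Finally, for general profinite (abelian) $\Gamma$, I would deduce the statement by taking the inverse limit over open subgroups $U$: both $\zeta_{X,\Gamma}(u)$ and $Z_{X,\Gamma}(u)$ are compatible under the projections $\Z[\Gamma/V][[u]] \to \Z[\Gamma/U][[u]]$ (the Euler product because $\alpha_{/U}$ is the composite of $\alpha$ with $\Gamma \to \Gamma/U$; the determinant because $A_{X,\Gamma}$ and $D_X$ are), so the identity valid at each finite level passes to the limit in $\Z[[\Gamma]][[u]]$. Actually, since $Z_{X,\Gamma}(u)$ as defined already lives in $\Z[\Gamma][u]$ with $\Gamma$ just a group (no finiteness used), and $\zeta_{X,\Gamma}(u) \in \Z[\Gamma][[u]]$ likewise, the cleanest statement is that it suffices to verify the identity after applying every ring homomorphism $\Z[\Gamma] \to \Z[\Gamma_0]$ for $\Gamma_0$ a finitely generated (hence residually finite) quotient, reducing to the finite case.

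The main obstacle I expect is the combinatorial bookkeeping in identifying $\zeta_{X,\Gamma}(u)^{-1}$ with $\det_{\Z[\Gamma]}(1 - Wu)$: one must be careful with the definition of primitive paths, the "no backtracking" condition encoded in $W$, rotation classes, and the fact that $\alpha(\ol e) = \alpha(e)^{-1}$ interacts correctly with the Bass block decomposition. Everything else is either a citation to the classical case or routine determinant algebra over a commutative ring. If one prefers to cite rather than reprove, the entire theorem can be extracted from the character-decomposition argument in the first paragraph, at the cost of a short lemma that a $\Z[\Gamma][[u]]$-valued identity which holds after every character evaluation holds on the nose.
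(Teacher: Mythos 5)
Your proposal matches the paper's own (very brief) argument: the paper's proof sketches exactly the two routes you describe---reduce to finite abelian $\Gamma$ and combine the three-term formula for Artin--Ihara $L$-functions over all characters of $\Gamma$ (Terras, Theorem~18.15), or imitate the classical Bass--Ihara determinant manipulation directly over $\Z[\Gamma][[u]]$---and then omits the details. One small point the paper makes that you gloss over is that the finite case should be further reduced to the case where $X(\Gamma)$ is connected before invoking the Artin--Ihara framework, and for the passage to general $\Gamma$ it is cleanest to note that $\zeta_{X,\Gamma}(u)$ and $Z_{X,\Gamma}(u)$ already lie in $\Z[\Gamma'][[u]]$ where $\Gamma'\subset\Gamma$ is the (finitely generated, hence residually finite) subgroup generated by the finitely many voltages $\alpha(e)$, rather than passing through arbitrary finitely generated quotients of $\Gamma$.
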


\begin{proof}
In case $\Gamma$ is trivial, the theorem is \cite[Theorem 2.5]{Ter11}.
More generally, the three-term determinant formula for Artin--Ihara $L$-functions is proved in \cite[Theorem 18.15]{Ter11}.
We can prove our theorem by imitating those proofs.
Alternatively, it is possible to deduce our theorem from the formula for Artin--Ihara $L$-functions.
Indeed, the general statement can be reduced to the case where $\Gamma$ is finite and $X(\Gamma)$ is connected.
In that case, the statement follows by combining the formula for Artin--Ihara $L$-functions for all the characters of $\Gamma$.
We omit the details.
\end{proof}

\section{Results for profinite coverings}\label{sec:Fitt_infin}

Let $(X, \Gamma, \alpha)$ be a voltage graph such that $\Gamma$ is profinite and abelian.
We always suppose Assumption \ref{ass:conn}, so Lemma \ref{lem:decomp_fg} implies $\Gamma$ is finitely generated.

Let $\Lambda$ be a compact flat $\Z$-algebra.
In this section, we aim at determining
\[
\Fitt_{\Lambda[[\Gamma]]}(\Jac_{\Lambda}(X(\Gamma))),
\]
assuming the equivalent conditions in Corollary \ref{cor:inj}.

For simplicity, in what follows we assume that $\Lambda = \Z_p$ with $p$ a fixed prime number.
This is the case of the most interest.
Then the case where $\Lambda = \hZ$ would also be completed because we have $\hZ = \prod_p \Z_p$.
Recall that, as we observed in Remark \ref{rem:Lap_inj}, the equivalent conditions in Corollary \ref{cor:inj} for $\Lambda = \Z_p$ says that the order of $\Gamma$ is divisible by $p^{\infty}$.

\subsection{Statement}\label{ss:inf_result}

First we reduce the question to an algebraic problem.

\begin{thm}\label{thm:F_J_inf}
Suppose that the order of $\Gamma$ is divisible by $p^{\infty}$.
Then we have
\[
\Fitt_{\Z_p[[\Gamma]]}(\Jac_{\Z_p}(X(\Gamma))) 
 = Z_{X, \Gamma} \Fitt^{[1]}_{\Z_p[[\Gamma]]}(\Z_p).
\]
\end{thm}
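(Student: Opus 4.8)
The plan is to imitate the argument for the finite case (Theorem \ref{thm:Fitt_fin1}), replacing the finite group ring $\ol{R}$ by the completed group ring $\Z_p[[\Gamma]]$ and the module $\Z/(\#\Gamma)\Z$ by $\Z_p$. The two inputs are the short exact sequence $0 \to \Jac_{\Z_p}(X(\Gamma)) \to \Pic_{\Z_p}(X(\Gamma)) \to \Z_p \to 0$ of Proposition \ref{prop:JP_ex_inf} and the four-term exact sequence \eqref{eq:Pic_ex} of Proposition \ref{prop:Pic_ex_inf}; the shift-theoretic formalism of \S\ref{ss:shift_defn} then produces the stated identity. The only genuinely new point compared with \S\ref{sec:Fin_arith} is that $\Z_p[[\Gamma]]$ need not be noetherian, so one must invoke the version of the shift theory extended to non-noetherian coefficient rings.

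First I would use the hypothesis that the order of $\Gamma$ is divisible by $p^{\infty}$: by Corollary \ref{cor:inj} together with Remark \ref{rem:Lap_inj} this gives $\GamLambda = 0$, so \eqref{eq:Pic_ex} becomes a short exact sequence
\[
0 \to \Z_p[[\Gamma]] \otimes_{\Z} \Div(X) \overset{\cL_{X, \Gamma}}{\to} \Z_p[[\Gamma]] \otimes_{\Z} \Div(X) \to \Pic_{\Z_p}(X(\Gamma)) \to 0.
\]
This presents $\Pic_{\Z_p}(X(\Gamma))$ as the cokernel of an injective endomorphism of a free $\Z_p[[\Gamma]]$-module of finite rank $\# V_X$; hence it is finitely presented, of projective dimension $\leq 1$, and torsion by Corollary \ref{cor:inj}, so it lies in the category $\cP_{\Z_p[[\Gamma]]}$ of \S\ref{ss:shift_defn}. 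Moreover $\Fitt_{\Z_p[[\Gamma]]}(\Pic_{\Z_p}(X(\Gamma))) = (\det_{\Z_p[[\Gamma]]} \cL_{X, \Gamma}) = (Z_{X, \Gamma})$, and $Z_{X, \Gamma}$ is a non-zero-divisor by Corollary \ref{cor:inj}(iii).

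Next I would check that $\Jac_{\Z_p}(X(\Gamma))$ is itself a finitely generated torsion $\Z_p[[\Gamma]]$-module: it is torsion as a submodule of the torsion module $\Pic_{\Z_p}(X(\Gamma))$, and it is finitely generated because in the sequence of Proposition \ref{prop:JP_ex_inf} the middle term is finitely generated while $\Z_p = \Z_p[[\Gamma]]/I(\Gamma)$ is finitely presented (the augmentation ideal $I(\Gamma)$ being finitely generated since $\Gamma$ is topologically finitely generated by Lemma \ref{lem:decomp_fg}). With all the modules in the appropriate categories, one applies the definition of $\Fitt^{[1]}_{\Z_p[[\Gamma]]}(-)$ to the exact sequence of Proposition \ref{prop:JP_ex_inf}: since $\Pic_{\Z_p}(X(\Gamma)) \in \cP_{\Z_p[[\Gamma]]}$ has Fitting ideal generated by the non-zero-divisor $Z_{X, \Gamma}$, one obtains $\Fitt^{[1]}_{\Z_p[[\Gamma]]}(\Z_p) = (Z_{X, \Gamma})^{-1} \Fitt_{\Z_p[[\Gamma]]}(\Jac_{\Z_p}(X(\Gamma)))$, which rearranges to the asserted formula.

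I expect the main obstacle to be foundational rather than computational: verifying that the shift theory of \cite{Kata_05} — in particular the well-definedness of $\Fitt^{[1]}_R(-)$ independently of the chosen resolution, and its multiplicativity in short exact sequences with middle term in $\cP_R$ — carries over to the possibly non-noetherian ring $R = \Z_p[[\Gamma]]$, and correctly pinning down the class of modules (finitely presented torsion modules, and the subclass $\cP_R$ of projective dimension $\leq 1$) on which these statements remain valid. This is exactly the generalization announced in \S\ref{ss:comments} and established in \S\ref{ss:shift_defn}; granting it, the proof above is the same short argument as for Theorem \ref{thm:Fitt_fin1}.
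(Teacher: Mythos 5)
Your proposal is correct and follows essentially the same route as the paper's proof: use $\GamLambda = 0$ (from Corollary~\ref{cor:inj} and Remark~\ref{rem:Lap_inj}) to put $\Pic_{\Z_p}(X(\Gamma))$ in $\cP_{\Z_p[[\Gamma]]}$ with Fitting ideal $(Z_{X,\Gamma})$, verify that $\Z_p$ is finitely presented torsion so that $\Fitt^{[1]}$ is defined, and read off the formula from the sequence in Proposition~\ref{prop:JP_ex_inf}. You invoke finite generation of the augmentation ideal where the paper points to an explicit resolution, but these are the same observation in different clothing.
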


\begin{proof}
By the assumption, Proposition \ref{prop:Pic_ex_inf} implies that $\Pic_{\Z_p}(X(\Gamma))$ is in the category $\cP_{\Z_p[[\Gamma]]}$ defined in \S \ref{ss:shift_defn} and moreover its Fitting ideal is $(Z_{X, \Gamma})$.
Since $\Gamma$ is finitely generated, it is not hard to show that the $\Z_p[[\Gamma]]$-module $\Z_p$ is finitely presented by constructing an explicit resolution as in \S \ref{ss:inf_alg} below.
Therefore, $\Fitt^{[1]}_{\Z_p[[\Gamma]]}(\Z_p)$ is well-defined.
Moreover, by Proposition \ref{prop:JP_ex_inf}, we see that $\Jac_{\Z_p}(X(\Gamma))$ is finitely generated and at the same time the theorem follows from the definition of $\Fitt^{[1]}_{\Z_p[[\Gamma]]}(-)$.
\end{proof}

Now we are led to computation of $\Fitt^{[1]}_{\Z_p[[\Gamma]]}(\Z_p)$.
The argument in the rest of this section is valid for any finitely generated profinite abelian group $\Gamma$ whose order is divisible by $p^{\infty}$.

Since $\Gamma$ is finitely generated, we can decompose $\Gamma$ as
\begin{equation}\label{eq:dec_inf}
\Gamma = \Delta_1 \times \cdots \times \Delta_s \times \Gamma_{s+1} \times \cdots \times \Gamma_{s+t+1},
\end{equation}
where $\Delta_l$ ($1 \leq l \leq s$) is a procyclic group whose order is not divisible by $p^{\infty}$ and $\Gamma_l$ ($s+1 \leq l \leq s+t+1$) is a procyclic group whose order is divisible by $p^{\infty}$.
Note that then the integer $t + 1$ is equal to the $\Z_p$-rank of the pro-$p$ completion of $\Gamma$, so by the assumption we have $t \geq 0$.
This decomposition is not unique in general.

For $1 \leq l \leq s$, let $\sigma_l$ be a (topological) generator of $\Delta_l$ and put $\tau_l = \sigma_l - 1$.
Similarly, for $s + 1 \leq l \leq s + t + 1$, let $\sigma_l$ be a generator of $\Gamma_l$ and put $T_l = \sigma_l - 1$.

Let $1 \leq l \leq s$.
We also fix an open subgroup $U_l \subset \Delta_l$ such that the order of $U_l$ is not divisible by $p$.
Put $n_l = [\Delta_l: U_l]$.
For each open subgroup $U$ of $U_l$, put
\[
\nu_{l, U} = \frac{1}{[U_l: U]} N_{\Delta_l/U} \in \Z_p[\Delta_l/U].
\]
Then $\nu_{l, U}$ is compatible if we vary $U$, so the family defines an element
\[
\nu_l = (\nu_{l, U})_U \in \Z_p[[\Delta_l]].
\]
This element satisfies $\nu_l^2 = n_l \nu_l$ and $\tau_l \nu_l = 0$ since each $\nu_{l, U}$ satisfies these properties.
Note that though $n_l$ and $\nu_l$ depend on the choice of $U_l$, they are well-defined up to $\Z_p^{\times}$.

The following is the main result.

\begin{thm}\label{thm:Fitt_infin}
The fractional ideal $\Fitt^{[1]}_{\Z_p[[\Gamma]]}(\Z_p)$ is equal to
\[
(T_{s+1}^{-1} \nu_1 \cdots \nu_s) + \left( \nu_1^{e_1} \cdots \nu_s^{e_s} \tau_1^{f_1} \cdots \tau_s^{f_s} T_{s+1}^{f_{s+1}}
	\, \middle| \, \begin{matrix} 0 \leq e_l \leq 1, f_l \geq 0, \\ e_1 + \cdots + e_s + f_1 + \cdots + f_{s+1} = s - 1 \end{matrix} \right)
\]
if $t = 0$ and 
\[
\left( \nu_1^{e_1} \cdots \nu_s^{e_s} \tau_1^{f_1} \cdots \tau_s^{f_s} T_{s+1}^{f_{s+1}} \cdots T_{s+t+1}^{f_{s+t+1}}
	\, \middle| \, \begin{matrix} 0 \leq e_l \leq 1, f_l \geq 0, \\ e_1 + \cdots + e_s + f_1 + \cdots + f_{s+t+1} = s + t - 1 \end{matrix} \right)
\]
if $t \geq 1$.
\end{thm}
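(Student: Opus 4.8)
The plan is to follow the same two-step pattern as in the finite case. The module-theoretic first step is already carried out in Theorem~\ref{thm:F_J_inf}, which reduces the determination of $\Fitt_{\Z_p[[\Gamma]]}(\Jac_{\Z_p}(X(\Gamma)))$ to the purely algebraic computation of $\Fitt^{[1]}_{\Z_p[[\Gamma]]}(\Z_p)$; so it only remains to establish the stated formulas for the latter, which (as noted) is a statement about any finitely generated profinite abelian $\Gamma$ whose order is divisible by $p^{\infty}$.

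First I would construct an explicit free resolution of the $\Z_p[[\Gamma]]$-module $\Z_p$, imitating the constructions in \S\ref{ss:resol} and \S\ref{ss:thm2}. Using the decomposition \eqref{eq:dec_inf}, I would take the tensor product over $\Z_p$ of the two-term Koszul-type complexes $[\Z_p[[\Gamma_l]]\overset{T_l}{\to}\Z_p[[\Gamma_l]]]$ for $s+1\leq l\leq s+t+1$ together with the truncated periodic three-term complexes $[\Z_p[[\Delta_l]]\overset{\nu_l}{\to}\Z_p[[\Delta_l]]\overset{\tau_l}{\to}\Z_p[[\Delta_l]]]$ for $1\leq l\leq s$. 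That each of the latter is a resolution of $\Z_p$ in degrees $0$ and $1$ should be checked by passing to finite levels $\Z_p[\Delta_l/U]$ — where $\Ker(\tau_l)$ is the free rank-one $\Z_p$-module generated by the norm element, which equals $\Image(\nu_l)$, and $\Ker(\nu_l)$ is the augmentation ideal, which equals $\Image(\tau_l)$ — and then taking the inverse limit, which is exact because these form Mittag-Leffler systems and $\Z_p$ is flat. Reading off the augmentation ideal $I=\Ker(\Z_p[[\Gamma]]\to\Z_p)$ as the cokernel of the degree-$2$ differential of the tensor-product complex, one obtains a presentation of $I$ by a matrix of exactly the block shape of $M_s(\nu_1,\dots,\nu_s,\tau_1,\dots,\tau_s)$ from \S\ref{ss:thm2}, now with the $s$ diagonal rows $\nu_1,\dots,\nu_s$ sitting above the block $N_{s+t+1}(\tau_1,\dots,\tau_s,T_{s+1},\dots,T_{s+t+1})$. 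In contrast with \S\ref{ss:thm2}, there is no extra Kolyvagin-derivative row here, since we resolve $\Z_p$ itself rather than a cyclic quotient $\Z/(\#\Gamma)\Z$.

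Next I would use the shift theory to express $\Fitt^{[1]}_{\Z_p[[\Gamma]]}(\Z_p)$ as a (possibly fractional) Fitting ideal attached to this presentation matrix — the analogue of Lemma~\ref{lem:Fitt1} — and then evaluate it. The evaluation is where I would invoke a formula already obtained in \cite{AK21}: expanding along the diagonal $\nu$-block and using the relations $\nu_l\tau_l=0$, as in the reasoning behind \eqref{eq:tM0} (cf.\ \cite[Proposition~4.7]{AK21}), reduces everything to Fitting ideals of pure matrices $N_m(\dots)$, which are controlled by the tree/cofactor argument of Proposition~\ref{prop:Fitt_p2} in the case without a $B$-row. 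Carrying this out should produce the two right-hand sides in the statement. The dichotomy between $t=0$ and $t\geq 1$ — the presence, respectively absence, of the fractional generator $T_{s+1}^{-1}\nu_1\cdots\nu_s$ — should fall out of the combinatorics: a fractional generator survives precisely when it is not forced to be multiplied by some $\nu_{l'}$ with $l'>s$, and there is such a $\nu_{l'}$ exactly when $t\geq 1$. Morally this is the degeneration of the Kolyvagin factor $D_l/n_l$ into $T_l^{-1}$ as $n_l\to\infty$.

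The main obstacle I expect is bookkeeping rather than a single hard step: setting up the shift-theoretic reduction correctly over $\Z_p[[\Gamma]]$ — in particular tracking the normalisation responsible for the fractional part when $t=0$ — and handling those procyclic factors $\Delta_l$ whose order is divisible by $p$, for which $\Z_p[[\Delta_l]]$ is a non-maximal order and $n_l$ is not a unit, so that one must verify that the element $\nu_l$ built from the normalised partial norms still obeys $\nu_l^2=n_l\nu_l$ and $\tau_l\nu_l=0$, the relations on which the whole computation rests. Once these points are in place, the argument is essentially a transcription of \S\ref{ss:resol} and \S\ref{ss:thm2} combined with the cited result of \cite{AK21}.
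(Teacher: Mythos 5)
Your overall plan is essentially the one the paper follows: a tensor-product Koszul-type resolution of $\Z_p$ built from the same two-term and three-term pieces, followed by an expansion along the $\nu$-diagonal block using $\nu_l\tau_l=0$ and then the Fitting-ideal formula for $N_m$ (the paper cites this as \cite[Proposition 4.8]{AK21}, recorded here as Proposition~\ref{prop:Fitt_p1}). The only substantive difference is where the last procyclic factor is placed. The paper splits $\Gamma=\Gamma^0\times\Gamma_{s+t+1}$ with $\Gamma^0$ the first $s+t$ factors, builds the resolution of $\Z_p$ over $\Z_p[[\Gamma^0]]$ (so the lower block is $N_{s+t}$, not $N_{s+t+1}$), and observes that $\Z_p[[\Gamma^0]]=\Z_p[[\Gamma]]/(T_{s+t+1})$ lies in $\cP_{\Z_p[[\Gamma]]}$ with $\Fitt_{\Z_p[[\Gamma]]}(\Z_p[[\Gamma^0]])=(T_{s+t+1})$; this immediately gives $\Fitt^{[1]}_{\Z_p[[\Gamma]]}(\Z_p)=T_{s+t+1}^{-1}\Fitt_{\Z_p[[\Gamma]]}(\Cok(d_2))$, and $\Cok(d_2)$ acquires the extra diagonal block $T_{s+t+1}\cdot I_{s+t}$ when presented over $\Z_p[[\Gamma]]$. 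You instead resolve $\Z_p$ over all $s+t+1$ factors and land on a larger presentation of the full augmentation ideal $I$ of $\Z_p[[\Gamma]]$; you would then still have to make the same shift step (mod out by $T_{s+t+1}R$ and divide by $T_{s+t+1}$, exactly the move you call ``the analogue of Lemma~\ref{lem:Fitt1}'') to turn this into $\Fitt^{[1]}(\Z_p)$. That works --- $I/T_{s+t+1}R$ is precisely the $\Cok(d_2)$ the paper presents with one fewer (trivial) generator --- but it is less economical, because you carry a redundant column for $x_{s+t+1}$ (which maps to $T_{s+t+1}$ and dies under the quotient) through the whole computation. One point you leave implicit and should make explicit: the module $P\in\cP_{\Z_p[[\Gamma]]}$ that underwrites the shift is $R/T_{s+t+1}R$, which is where the $T_{s+t+1}^{-1}$ in the $t=0$ case comes from --- your closing intuition about $\nu_{l'}$ with $l'>s$ is not quite right (there are no $\nu_{l'}$ for $l'>s$; the distinction is whether $i=0$ contributes to the sum over Fitting minors, which only happens when $t=0$). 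With that adjustment, your verification of $\Ker(\tau_l)=\Image(\nu_l)$ at finite levels, and your worry about $n_l$ being non-unit, are both handled exactly by the paper's normalization $\nu_{l,U}=\frac{1}{[U_l:U]}N_{\Delta_l/U}$ with $p\nmid\#U_l$, so those relations indeed hold in $\Z_p[[\Delta_l]]$.
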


\subsection{Proof of Theorem \ref{thm:Fitt_infin}}\label{ss:inf_alg}

First we recall a result of Atsuta and the author \cite{AK21} that plays a key role in the proof.

\begin{prop}[{\cite[Proposition 4.8]{AK21}}]\label{prop:Fitt_p1}
Let $s \geq 0$ be an integer.
Let us consider the matrix $N_s(T_1, \dots, T_s)$ constructed in \S \ref{ss:resol} whose components are in the polynomial ring $\Z[T_1, \dots, T_s]$.
For each $i \geq 0$, the $i$-th Fitting ideal of the matrix is described as
\[
\Fitt_{i}(N_s(T_1, \dots, T_s))
= \begin{cases}
	(1) & (i \geq s)\\
	0 & (s \geq 1, i = 0)\\
	(T_1, \dots, T_s)^{s-i} & (1 \leq i < s).
\end{cases}
\]
\end{prop}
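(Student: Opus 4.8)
The plan is to compute $\Fitt_i\bigl(N_s(T_1,\dots,T_s)\bigr)$ directly from minors, reusing the graph‑theoretic analysis of submatrices of $N_s$ carried out in \S\ref{ss:resol}. Write $\fm=(T_1,\dots,T_s)\subset\Z[T_1,\dots,T_s]$. By definition $\Fitt_i(N_s(T_1,\dots,T_s))$ is the ideal generated by the $(s-i)\times(s-i)$ minors of the $\binom{s}{2}\times s$ matrix $N_s$, whose rows are indexed by the edges $x_lx_{l'}$ on the vertex set $\{x_1,\dots,x_s\}$ and whose columns are indexed by the vertices. The two extreme cases are immediate. If $i\ge s$, then $s-i\le 0$ and the empty minor gives $(1)$. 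If $i=0$ and $s\ge 1$, then any $s\times s$ minor (if one exists) involves $s$ rows, i.e.\ $s$ edges on $s$ vertices, so the underlying graph contains a cycle; exactly as in the proof of Claim \ref{claim2}, the rows attached to such a cycle are supported on the cycle's columns and restrict there to a singular ``cyclic bidiagonal'' matrix (the identity permutation and the full cycle contribute cancelling terms), so they are linearly dependent over $\Frac(\Z[T_1,\dots,T_s])$ and the minor vanishes. Hence $\Fitt_0(N_s)=0$ for $s\ge 1$.

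For $1\le i\le s-1$ we must show $\Fitt_i(N_s)=\fm^{\,s-i}$. For the inclusion ``$\subseteq$'', take a nonzero minor $\det(N_{E,S})$ with $E$ a set of $s-i$ edges and $S$ a set of $s-i$ vertices. By the cycle argument above, the graph $G$ on $\{x_1,\dots,x_s\}$ with edge set $E$ must be a forest, hence has exactly $i$ connected components. The submatrix $N_{E,S}$ is then block diagonal along the components, so a componentwise size count forces $S$ to omit exactly one vertex from each component; declaring that omitted vertex the root of its component‑tree, Claim \ref{claim3} applied one component at a time yields
\[
\det(N_{E,S})=\pm\prod_{v}T_v^{\,c(v)},\qquad \textstyle\sum_v c(v)=2(s-i)-(s-i)=s-i,
\]
where $c(v)$ is the number of children of $v$. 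Thus every generating minor lies in $\fm^{\,s-i}$.

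For the inclusion ``$\supseteq$'' it suffices to realize each monomial $T_1^{b_1}\cdots T_s^{b_s}$ with $b_v\ge 0$ and $\sum_v b_v=s-i$ as such a $\det(N_{E,S})$. Since at most $s-i$ indices have $b_v>0$, at least $i$ vertices have $b_v=0$; set $i-1$ of them aside as isolated components and build a single rooted tree on the remaining $s-i+1$ vertices in which each $v$ has exactly $b_v$ children. This is done by an easy induction parallel to the last claim of \S\ref{ss:resol}: since the total number of required children equals the number of non‑root vertices, at each stage there is a vertex with no remaining required children, which we attach as a new leaf below a vertex with positive remaining count. Taking $S$ to be the complement of the $i$ roots produces the desired minor, so $\Fitt_i(N_s)=\fm^{\,s-i}$. (One may alternatively just invoke \cite[Proposition 4.8]{AK21}.)

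The main obstacle is the reverse inclusion: one must verify that the monomials produced by the minors exhaust all of $\fm^{\,s-i}$ and not merely, say, its squarefree part, which is precisely the content of the rooted‑forest construction combined with the children‑versus‑degree bookkeeping in Claim \ref{claim3}. The vanishing of minors along cycles and the block decomposition over connected components are routine consequences of \S\ref{ss:resol}.
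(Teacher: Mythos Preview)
The paper does not prove this proposition; it is simply quoted from \cite[Proposition 4.8]{AK21}. Your argument is correct and supplies an independent proof by recycling the graph-theoretic analysis of \S\ref{ss:resol}. This is a pleasant observation: the same minor computations (Claims \ref{claim2}--\ref{claim3} and the final existence claim) that drive Proposition \ref{prop:Fitt_p2} also determine all the higher Fitting ideals of $N_s$ without the auxiliary $B$-row.

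Two small points of precision. In the ``$\subseteq$'' direction, what you actually invoke is not Claim \ref{claim3} itself but the cofactor computation inside its proof: for a tree component $C$ rooted at the omitted vertex $r$, the block minor with rows the $|C|-1$ edges of $C$ and columns $C\setminus\{r\}$ is exactly the $\delta_r$ computed there, namely $\pm\prod_{v\in C}T_v^{c(v)}$. (You should also remark that if some component $C_j$ has $|S\cap C_j|\ne|C_j|-1$ then the block shape is non-square and the whole determinant vanishes; this is routine but worth a sentence.) In the ``$\supseteq$'' direction, your inductive leaf-attachment implicitly selects the root of the main component, and one must note that this root necessarily has $b_r\ge 1$ (only vertices with positive remaining count ever acquire children), which is what makes the $(s-i+1)$-vertex component genuinely a tree rather than an isolated point. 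Equivalently, and perhaps more cleanly: choose any vertex $r$ among the $s-i+1$ with $b_r\ge 1$, set $f_r=b_r-1$ and $f_v=b_v$ for $v\ne r$, and apply the final claim of \S\ref{ss:resol} verbatim to produce the tree; then root it at $r$.
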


Put 
\[
\Gamma^0 = \Delta_1 \times \cdots \times \Delta_s \times \Gamma_{s+1} \times \cdots \times \Gamma_{s+t},
\]
so $\Gamma = \Gamma^0 \times \Gamma_{s+t+1}$.
Let us begin with constructing a free resolution of $\Z_p$ over $\Z_p[[\Gamma^0]]$ in a similar way as in \S \S \ref{ss:resol}--\ref{ss:thm2}.
Observe that we have an exact sequences
\[
\Z_p[[\Delta_l]] x_l^2 \overset{\nu_l}{\to} \Z_p[[\Delta_l]] x_l \overset{\tau_l}{\to} \Z_p[[\Delta_l]] \to \Z_p \to 0
\]
for $1 \leq l \leq s$ and 
\[
0 \to \Z_p[[\Gamma_l]] x_l \overset{T_l}{\to} \Z_p[[\Gamma_l]] \to \Z_p \to 0
\]
for $s+1 \leq l \leq s+t$.
Then, by taking the tensor product of complexes, we obtain an exact sequence
\[
\bigoplus_{l=1}^s \Z_p[[\Gamma^0]] x_l^2 \oplus 
	\bigoplus_{1 \leq l < l' \leq s+t} \Z_p[[\Gamma^0]] x_l x_{l'}
\overset{d_2}{\to} \bigoplus_{l=1}^{s+t} \Z_p[[\Gamma^0]] x_l
\overset{d_1}{\to} \Z_p[[\Gamma^0]] 
\overset{d_0}{\to} \Z_p \to 0,
\]
where $d_0$ is the augmentation map, $d_1$ is determined by
\[
d_1(x_l) = 
\begin{cases}
\tau_l & (1 \leq l \leq s)\\
T_l & (s + 1 \leq l \leq s + t),
\end{cases}
\]
and $d_2$ is determined by
\[
d_2(x_l^2) = \nu_l x_l \quad (1 \leq l \leq s)
\]
and
\[
d_2(x_l x_{l'}) = 
\begin{cases}
- \tau_{l'} x_l + \tau_l x_{l'} & (1 \leq l < l' \leq s)\\
- T_{l'} x_l + \tau_l x_{l'} & (1 \leq l \leq s < s+1 \leq l' \leq s+t)\\
- T_{l'} x_l + T_l x_{l'} & (s+1 \leq l < l' \leq s+t).
\end{cases}
\]

Therefore, the presentation matrix of $d_2$ is of the form
\[
M_{s, t}(\nu_1,\dots, \nu_s, \tau_1, \dots, \tau_s, T_{s+1}, \dots, T_{s+t}) = 
\begin{pmatrix}
	\begin{matrix}
	\begin{matrix}
	\nu_1 & & \\
	& \ddots & \\
	& & \nu_s \\
	\end{matrix}
	& 0\\
	\end{matrix}\\
	N_{s+t}(\tau_1, \dots, \tau_s, T_{s+1}, \dots, T_{s+t}) 
\end{pmatrix}.
\]
Now observe that $\Z_p[[\Gamma^0]]$ is in $\cP_{\Z_p[[\Gamma]]}$ and $\Fitt_{\Z_p[[\Gamma]]}(\Z_p[[\Gamma^0]]) = (T_{s+t+1})$.
Since we have an exact sequence
\[
0 \to \Cok(d_2) \overset{d_1}{\to} \Z_p[[\Gamma^0]] 
\overset{d_0}{\to} \Z_p \to 0,
\]
we obtain
\begin{align}
\Fitt^{[1]}_{\Z_p[[\Gamma]]}(\Z_p)
& = T_{s+t+1}^{-1} \Fitt_{\Z_p[[\Gamma]]}(\Cok(d_2)) \\
& = T_{s+t+1}^{-1} \Fitt
	( \begin{pmatrix} M_{s, t}(\nu_1,\dots, \nu_s, \tau_1, \dots, \tau_s, T_{s+1}, \dots, T_{s+t}) \\ 
		\begin{matrix}
		T_{s+t+1} & & \\ & \ddots & \\ & & T_{s+t+1}
		\end{matrix}
	\end{pmatrix} )\\
& = \sum_{i=0}^{s+t} T_{s+t+1}^{i-1} \Fitt_i(M_{s, t}(\nu_1,\dots, \nu_s, \tau_1, \dots, \tau_s, T_{s+1}, \dots, T_{s+t})).
\end{align}

For each $0 \leq i \leq s+t$, by an analogous consideration as \eqref{eq:tM0}, we have
\begin{align}\label{eq:Mst}
& \Fitt_i(M_{s, t}(\nu_1,\dots, \nu_s, \tau_1, \dots, \tau_s, T_{s+1}, \dots, T_{s+t}))\\
& \quad = \sum_{j = 0}^{\min\{s, s+t-i\}} \sum_{\substack{{\bm a} \subset \{1, 2, \dots, s\} \\ \# {\bm a} = j}} \nu_{a_1} \cdots \nu_{a_j} 
	\Fitt_i(N_{s + t - j}(\tau_{a_{j + 1}}, \dots, \tau_{a_s}, T_{s+1}, \dots, T_{s+t})).
\end{align}
Here, we use the same notation as \eqref{eq:tM0}.
Let us compute the right hand side of \eqref{eq:Mst} by using Proposition \ref{prop:Fitt_p1}.

First we consider $i = 0$.
Then by Proposition \ref{prop:Fitt_p1}, only $j = s + t$ contributes to the sum, which is possible only when $t = 0$.
Therefore, the right hand side of \eqref{eq:Mst} vanishes unless $t = 0$.
If $t = 0$, the right hand side equals $(\nu_1 \cdots \nu_s)$ (the term for $j = s$ and $\bm{a} = \{1, 2, \dots, s\}$).

Suppose $1 \leq i \leq s+t$.
Then Proposition \ref{prop:Fitt_p1} tells us that the right hand side of \eqref{eq:Mst} is equal to
\begin{align}
& \sum_{j = 0}^{\min\{s, s+t-i\}} \sum_{\substack{{\bm a} \subset \{1, 2, \dots, s\} \\ \# {\bm a} = j}} \nu_{a_1} \cdots \nu_{a_j} (\tau_{a_{j + 1}}, \dots, \tau_{a_s}, T_{s+1}, \dots, T_{s+t})^{s + t - i - j}\\
& \qquad = \sum_{j = 0}^{\min\{s, s+t-i\}} \sum_{\substack{{\bm a} \subset \{1, 2, \dots, s\} \\ \# {\bm a} = j}} \nu_{a_1} \cdots \nu_{a_j} (\tau_1, \dots, \tau_s, T_{s+1}, \dots, T_{s+t})^{s + t - i - j}\\
& \qquad = \left( \nu_1^{e_1} \cdots \nu_s^{e_s} \tau_1^{f_1} \cdots \tau_s^{f_s} T_{s+1}^{f_{s+1}} \cdots T_{s+t}^{f_{s+t}}
	\, \middle| \, \begin{matrix} 0 \leq e_l \leq 1, f_l \geq 0, \\ e_1 + \cdots + e_s + f_1 + \cdots f_{s+t} = s + t - i \end{matrix} \right).
\end{align}

Therefore, taking the summation for $1 \leq i \leq s + t$ gives
\begin{align}
& \sum_{i=1}^{s+t} T_{s+t+1}^{i-1} \Fitt_i(M_{s, t}(\nu_1,\dots, \nu_s, \tau_1, \dots, \tau_s, T_{s+1}, \dots, T_{s+t}))\\
& \qquad = \left( \nu_1^{e_1} \cdots \nu_s^{e_s} \tau_1^{f_1} \cdots \tau_s^{f_s} T_{s+1}^{f_{s+1}} \cdots T_{s+t+1}^{f_{s+t+1}}
	\, \middle| \, \begin{matrix} 0 \leq e_l \leq 1, f_l \geq 0, \\ e_1 + \cdots + e_s + f_1 + \cdots f_{s+t+1} = s + t - 1 \end{matrix} \right).
\end{align}

By combining these formulas, we obtain Theorem \ref{thm:Fitt_infin}.

\subsection{Essentially finite case}\label{ss:remark}

Let $(X, \Gamma, \alpha)$ be a voltage graph such that $\Gamma$ is profinite abelian and we suppose Assumption \ref{ass:conn}.
Let us fix a prime number $p$

In this section so far, we studied the case where the order of $\Gamma$ is divisible by $p^{\infty}$.
In case the order of $\Gamma$ is not divisible by $p^{\infty}$ (but possibly infinite), we can obtain the following results, which are profinite generalizations of the results in \S \S \ref{sec:Fin_arith}--\ref{sec:fin_alg}.
We state the results without proof because it is essentially the same as in \S \S \ref{sec:Fin_arith}--\ref{sec:fin_alg}.

Let us construct a decomposition
\[
\Gamma = \Delta_1 \times \cdots \times \Delta_s,
\]
where $\Delta_l$ is a procyclic group (whose order is necessarily not divisible by $p^{\infty}$).
We fix an open subgroup $U_l \subset \Delta_l$ such that the order of $U_l$ is not divisible by $p$.
Then we introduce $\tau_l \in \Z_p[\Delta_l]$, a positive integer $n_l$, and $\nu_l \in \Z_p[[\Delta_l]]$ in the same way as in \S \ref{ss:inf_result}.
We then define $\nu_{\Gamma} = \nu_1 \cdots \nu_l \in \Z_p[[\Gamma]]$.

Then, as an analogue of Theorem \ref{thm:Fitt_fin1}, we have
\[
\Fitt_{\Z_p[[\Gamma]]/(\nu_{\Gamma})}(\Jac_{\Z_p}(X(\Gamma))/\nu_{\Gamma} \Jac_{\Z_p}(X(\Gamma)))
= \ol{Z_{X, \Gamma}} \Fitt^{[1]}_{\Z_p[[\Gamma]]/(\nu_{\Gamma})}(\Z_p/ (n_1 \cdots n_s) \Z_p).
\]
Moreover, as an analogue of Theorem \ref{thm:Fitt_fin}, we have
\begin{align}
& \Fitt^{[1]}_{\Z_p[[\Gamma]]/(\nu_{\Gamma})}(\Z_p/ (n_1 \cdots n_s) \Z_p)\\
& \quad =  \sum_{l=1}^s \left( \nu_1 \cdots \nu_{l-1} \cdot \frac{D_l}{n_l} \cdot \nu_{l+1} \cdots \nu_s \right) \\
& \qquad + \left(\nu_1^{e_1} \cdots \nu_s^{e_s} \tau_1^{f_1} \cdots \tau_s^{f_s} \, \middle| \, \begin{matrix} 0 \leq e_l \leq 1, f_l \geq 0, \\ e_1 + \cdots + e_s + f_1 + \cdots + f_s = s- 2 \end{matrix} \right).
\end{align}
Here, $D_l \in \Z_p[[\Delta_l]]$ is any element such that \eqref{eq:D_key} holds.

\section{Self-duality of Jacobian groups}\label{sec:dual}

The prerequisite for this section is \S \ref{sec:defn_J_graph}.
In this section, we observe a self-duality property of the Jacobian groups.
This is in contrast to the corresponding story in number theory as discussed in Remark \ref{rem:NT_dual} below.

First we fix our convention about duals.
For a finite $\Z$-module $M$, we define its Pontryagin dual by $M^{\vee} = \Hom_{\Z}(M, \Q/\Z)$.
Note that we have an alternative description $M^{\vee} = \Ext^1_{\Z}(M, \Z)$.
For a finitely generated $\Z$-module $M$, we also define its $\Z$-linear dual by $M^* = \Hom_{\Z}(M, \Z)$.
Both $(-)^{\vee}$ and $(-)^*$ are contravariant functors.

If we have a left (resp.~right) action of a group $\Gamma$ on such an $M$, we introduce a right (resp.~left) action of $\Gamma$ on $M^{\vee}$ or $M^*$ by
\[
(\phi \gamma)(x) = \phi(\gamma x)
\quad
\text{(resp.~$(\gamma \phi)(x) = \phi(x \gamma)$)}
\]
for $\gamma \in \Gamma$, $\phi \in M^{\vee}$ or $\phi \in M^*$, and $x \in M$.
Let $\iota$ be the involution on $\Gamma$ defined by $\iota(\gamma) = \gamma^{-1}$.
If $M$ is a left (resp.~right) $\Gamma$-module, we define a right (resp.~left) $\Gamma$-module $M^{\iota}$ by $M^{\iota} = M$ as an additive module and the group action is defined by $x \gamma = \gamma^{-1} x$ (resp.~$\gamma x = x \gamma^{-1}$) for $\gamma \in \Gamma$ and $x \in M$.

Now let $Y$ be a connected graph equipped with a left action of a group $\Gamma$.
Since the set of vertices $\{ [w] \}_{w \in V_Y}$ is a $\Z$-basis of $\Div(Y)$, we can construct the dual basis $\{\phi_w\}_{w \in V_Y}$ of $\Div(Y)^*$.
Namely, $\phi_w$ is the homomorphism characterized by $\phi_w([w]) = 1$ and $\phi_w([w']) = 0$ for any $w' \neq w$.
It is easy to check that $\phi_{\gamma w} = \phi_{w} \gamma^{-1}$ for any $w \in V_Y$ and $\gamma \in \Gamma$.
Therefore, we have a $\Z$-isomorphism of left $\Z[\Gamma]$-modules
\begin{equation}\label{eq:Div_dual}
\Div(Y) \simeq \Div(Y)^{*, \iota}
\end{equation}
by sending $[w]$ to $\phi_w$.

Now let us consider the following commutative diagram
\begin{equation}\label{eq:comp_L_dual2}
\xymatrix{
	0 \ar[r]
	& \Z \ar[r]^-{\iota_Y} \ar[d]_{\simeq}
	& \Div(Y) \ar[r]^-{\cL_Y} \ar[d]_{\simeq}
	& \Div(Y) \ar[r]^-{\deg_Y} \ar[d]_{\simeq}
	& \Z \ar[r] \ar[d]_{\simeq}
	& 0\\
	0 \ar[r]
	& \Z^{*, \iota} \ar[r]_-{\deg_Y^*}
	& \Div(Y)^{*, \iota} \ar[r]_-{\cL_Y^*}
	& \Div(Y)^{*, \iota} \ar[r]_-{\iota_Y^*}
	&\Z^{*, \iota} \ar[r]
	& 0.
}
\end{equation}
Here, the upper sequence is the complex constructed in Lemma \ref{lem:cpx_J}, and the lower is the $\Z$-linear dual of the upper.
The middle two vertical isomorphisms are \eqref{eq:Div_dual}.
The left and right vertical isomorphisms are the natural ones.
It is straightforward to prove that this diagram is commutative.

\begin{lem}\label{lem:dual_cpx}
The lower complex in \eqref{eq:comp_L_dual2} is acyclic except for the right $\Div(Y)^{*, \iota}$, where the homology group is isomorphic to $\Jac(Y)^{\vee, \iota}$.
\end{lem}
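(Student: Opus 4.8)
The plan is to compute the homology of the lower complex in \eqref{eq:comp_L_dual2} directly, without using the commutativity of that diagram, by exploiting that this complex is literally the $\Z$-linear dual of the complex \eqref{eq:cpx_J} for $Y$. First I would recall from Lemma \ref{lem:cpx_J} that the upper complex
\[
0 \to \Z \overset{\iota_Y}{\to} \Div(Y) \overset{\cL_Y}{\to} \Div(Y) \overset{\deg_Y}{\to} \Z \to 0
\]
is a bounded complex of finitely generated free $\Z$-modules whose homology vanishes except at the right $\Div(Y)$, where it is $\Jac(Y)$; in particular this complex is quasi-isomorphic to $\Jac(Y)$ placed in a single homological degree. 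Since every term is free, applying $\Hom_\Z(-,\Z)$ termwise computes $R\Hom_\Z(-,\Z)$, so the lower complex represents $R\Hom_\Z(\Jac(Y),\Z)$ up to a shift.

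Next I would apply the universal coefficient theorem for cohomology (equivalently, split the upper complex into short exact sequences of free $\Z$-modules and dualize each): writing $H_j$ for the $j$-th homology of the upper complex, for each degree $i$ of the dual complex there is an exact sequence
\[
0 \to \Ext^1_\Z(H_{i-1},\Z) \to H^i(\Hom_\Z(\text{upper},\Z)) \to \Hom_\Z(H_i,\Z) \to 0 .
\]
Since $H_j = 0$ except $H_1 = \Jac(Y)$, and since $\Hom_\Z(\Jac(Y),\Z)=0$ because $\Jac(Y)$ is finite (Theorem \ref{thm:Kirch}), the only surviving cohomology of the lower complex is $\Ext^1_\Z(\Jac(Y),\Z)$. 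It sits in the degree whose term is $\Hom_\Z(\Div(Y),\Z)$ for the \emph{left} $\Div(Y)$ of the upper complex, which — because dualizing reverses the complex — is precisely the term labelled ``the right $\Div(Y)^{*,\iota}$'' in \eqref{eq:comp_L_dual2}. At this point I would use the description $\Ext^1_\Z(\Jac(Y),\Z)\simeq \Jac(Y)^{\vee}$ recalled at the start of \S\ref{sec:dual}, which follows from applying $\Hom_\Z(\Jac(Y),-)$ to $0 \to \Z \to \Q \to \Q/\Z \to 0$ and is functorial in $\Jac(Y)$.

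Finally I would make everything $\Z[\Gamma]$-equivariant. The upper complex \eqref{eq:cpx_J} is a complex of left $\Z[\Gamma]$-modules, so $\Hom_\Z(-,\Z)$ yields a complex of right $\Z[\Gamma]$-modules, and the functoriality of the universal coefficient sequence and of $\Ext^1_\Z(-,\Z)\simeq(-)^{\vee}$ shows the identification above is one of right $\Z[\Gamma]$-modules; rewriting the whole lower complex with the $\iota$-twist, as in \eqref{eq:comp_L_dual2}, turns these into left $\Z[\Gamma]$-modules and converts $\Jac(Y)^{\vee}$ into $\Jac(Y)^{\vee,\iota}$, giving the lemma. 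I expect the only real work to be the bookkeeping in the previous paragraph — tracking which cohomological degree the class $\Ext^1_\Z(\Jac(Y),\Z)$ lands in and checking that this is the term $\Div(Y)^{*,\iota}$ adjacent to $\iota_Y^{*}$, together with verifying that the $\iota$-twists are placed consistently — rather than any substantial homological difficulty.
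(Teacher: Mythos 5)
Your proposal is correct and matches the paper's own proof in substance: the paper also observes that the upper complex is quasi-isomorphic to $\Jac(Y)$ placed in one degree, dualizes termwise to compute $R\Hom_{\Z}(\Jac(Y),\Z) \simeq \Ext^1_{\Z}(\Jac(Y),\Z)\simeq\Jac(Y)^{\vee}$, and identifies the degree where this lands --- your universal coefficient calculation is simply this argument written out step by step, and the paper even notes the alternative route of splitting into short exact sequences and dualizing, which is exactly what the UCT packages.
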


\begin{proof}
By Lemma \ref{lem:cpx_J}, the upper complex is quasi-isomorphic to the finite module $\Jac(Y)$ (located at the appropriate degree).
Therefore, its $\Z$-linear dual is quasi-isomorphic to $\Ext^1_{\Z}(\Jac(Y), \Z) \simeq \Jac(Y)^{\vee}$.
This proves the assertion.
In a more elementary way, it is possible to deduce the assertion by splitting the upper complex to three short exact sequences and then taking their $\Z$-linear duals (we omit the details).
\end{proof}

Now we obtain the following.

\begin{prop}\label{prop:dual}
We have a natural isomorphism
\[
\Jac(Y) \simeq \Jac(Y)^{\vee, \iota}
\]
as left $\Z[\Gamma]$-modules.
\end{prop}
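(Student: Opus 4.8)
The plan is to read the isomorphism directly off the commutative diagram \eqref{eq:comp_L_dual2}. That diagram displays the complex of Lemma \ref{lem:cpx_J} for $Y$ together with its $\Z$-linear dual as its two rows, joined by four vertical maps that are all isomorphisms: the middle two are the dual-basis identification \eqref{eq:Div_dual}, and the outer two are the canonical identifications of the trivial module $\Z$ with its (twisted) $\Z$-linear dual. So the first step is simply to invoke the commutativity of \eqref{eq:comp_L_dual2}, which has already been granted, and the fact that it is a morphism of complexes with all vertical arrows isomorphisms.

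Next I would pass to homology in the two rows. By Lemma \ref{lem:cpx_J}, the upper row is acyclic except at the right-hand copy of $\Div(Y)$, where the homology is $\Jac(Y)$; by Lemma \ref{lem:dual_cpx}, the lower row is acyclic except at the right-hand copy of $\Div(Y)^{*,\iota}$, where the homology is $\Jac(Y)^{\vee,\iota}$. Since the diagram commutes and the relevant (right-hand middle) vertical arrow is an isomorphism, it induces an isomorphism on these homology groups, yielding $\Jac(Y) \xrightarrow{\ \sim\ } \Jac(Y)^{\vee,\iota}$. Naturality is built in: every arrow in \eqref{eq:comp_L_dual2} is canonical (the dual basis is indexed by the canonical vertex set of $Y$), so no choices intervene.

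Finally I would check $\Z[\Gamma]$-equivariance, i.e.\ that all four vertical maps in \eqref{eq:comp_L_dual2} are morphisms of left $\Z[\Gamma]$-modules. The middle maps are \eqref{eq:Div_dual}, which was constructed to be an isomorphism of left $\Z[\Gamma]$-modules via the relation $\phi_{\gamma w} = \phi_w \gamma^{-1}$; the outer maps identify $\Z$ with its trivial left action with $\Z^{*,\iota}$, which also carries the trivial left action, so they are automatically equivariant. Hence the induced homology isomorphism is one of left $\Z[\Gamma]$-modules, which is exactly the assertion.

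I do not expect a genuine obstacle at this point. The substantive input — that $\cL_Y$ is intertwined with $\cL_Y^{*}$ under \eqref{eq:Div_dual}, which is nothing but the symmetry of the Laplacian, together with the computation of the homology of the dualized complex via $\Ext^1_{\Z}(\Jac(Y),\Z)\simeq\Jac(Y)^{\vee}$ — has already been packaged into the commutativity of \eqref{eq:comp_L_dual2} and into Lemma \ref{lem:dual_cpx}. The only real care needed is bookkeeping of the $\iota$-twist and of left-versus-right actions, which is handled by the conventions fixed at the beginning of the section.
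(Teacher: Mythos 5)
Your argument is exactly the one the paper intends: the commutativity of \eqref{eq:comp_L_dual2} with vertical isomorphisms, together with Lemma~\ref{lem:cpx_J} for the top row and Lemma~\ref{lem:dual_cpx} for the bottom row, yields the isomorphism on the unique nonzero homology and the $\Z[\Gamma]$-equivariance is built into \eqref{eq:Div_dual}. The paper simply compresses this into ``follows immediately from Lemma~\ref{lem:dual_cpx}''; your write-up unpacks the same reasoning.
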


\begin{proof}
This follows immediately from Lemma \ref{lem:dual_cpx}.
\end{proof}

\begin{rem}\label{rem:NT_dual}
Proposition \ref{prop:dual} implies 
\[
\Fitt_{\Z[\Gamma]}(\Jac(Y)^{\vee}) = \iota \left( \Fitt_{\Z[\Gamma]}(\Jac(Y))\right)
\]
as long as $\Gamma$ is commutative.
This phenomenon is in contrast to the situation in number theory.
As revealed in \cite{AK21} (see Remark \ref{rem:AK}), the Fitting ideal of $\Cl_L^{T, -}$ is much more complicated than that of $\Cl_L^{T, -, \vee}$.
\end{rem}

\section{Iwasawa theory for graphs from a module-theoretic viewpoint}\label{sec:Iwa}

The prerequisite for this section is \S \S \ref{sec:defn_J_graph} and \ref{sec:pre}.

In this section, we discuss Iwasawa theory for graphs.
In \S \ref{ss:ICNF} (resp.~\S \ref{ss:Kida}), we give a short proof of an analogue of the Iwasawa class number formula (resp.~of Kida's formula) for graphs.
The results are not new and indeed already obtained by others; the Iwasawa class number formula is proved independently by Gonet \cite{Gon22} and McGown--Valli\`{e}res \cite{MV21b}, and Kida's formula is proved by Ray--Valli\`eres \cite{RV22}.
However, the proofs in this paper are different from the previous ones to some extent.
Our observation is that those results directly follow from rather general module-theoretic propositions.
An advantage of this is, for instance, that we can avoid separate discussion on the case where the Euler characteristic of the base graph is zero as in previous works.
The author thinks that this method is more concise and so it is worth publishing.

\subsection{Iwasawa class number formula for graphs}\label{ss:ICNF}

Let us briefly review the original Iwasawa class number formula, proved by Iwasawa \cite[Theorem 11]{Iwa59}.
Let $K_{\infty}/K$ be a $\Z_p$-extension of number fields.
This means that $K_{\infty}/K$ is a Galois extension whose Galois group $\Gamma$ is isomorphic to $\Z_p$.
For each $n \geq 0$, let $K_n$ be the intermediate field corresponding to $\Gamma^{p^n} \subset \Gamma$.
Then $K_n/K$ is a Galois extension whose Galois group is $\Gamma/\Gamma^{p^n} \simeq \Z/p^n\Z$ and we obtain a tower of number fields
\[
K = K_0 \subset K_1 \subset K_2 \subset \cdots.
\]
Now the Iwasawa class number formula states that there exist integers $\lambda \geq 0$, $\mu \geq 0$, and $\nu$ such that
\[
\ord_p(\# \Cl(K_n)) = \lambda n + \mu p^n + \nu
\]
for $n \gg 0$.
Here, $\Cl(K_n)$ denotes the ideal class group of $K_n$ and $\ord_p$ denotes the additive $p$-adic valuation normalized so that $\ord_p(p) = 1$.

The proof of this  formula is explained in Washington \cite[\S 13.3]{Was97}.
In general, associated to a finitely generated torsion $\Z_p[[\Gamma]]$-module $M$ are integers $\lambda(M) \geq 0$, $\mu(M) \geq 0$ that are respectively called the $\lambda$-, $\mu$-invariants of $M$.
The integers $\lambda$, $\mu$ in the Iwasawa class number formula are exactly the $\lambda$-, $\mu$-invariants of the associated Iwasawa module.

The invariants $\lambda(M)$, $\mu(M)$ are defined using the structure theorem for modules over $\Z_p[[\Gamma]]$.
We do not review the details in this paper (see, e.g., \cite[\S 13.2]{Was97} or \cite[(5.1.10)]{NSW08} for the structure theorem and \cite[(5.3.9)]{NSW08} for the definitions of $\lambda$-, $\mu$-invariants).
Let us just recall that we have $\mu(M) = 0$ if and only if $M$ is finitely generated over $\Z_p$, in which case $\lambda(M)$ is equal to the $\Z_p$-rank of $M$.

Now we consider the analogue for graphs.
Let $(X, \Gamma, \alpha)$ be a voltage graph such that $\Gamma$ is isomorphic to $\Z_p$.
We suppose Assumption \ref{ass:conn}.
Then, as introduced in \S \ref{ss:profin}, we have a $\Z_p$-covering $X_{\infty} = X(\Gamma)$ of $X$.
For each $n \geq 0$, we put $X_n = X(\Gamma/\Gamma^{p^n})$, which is called the $n$-th layer of $X_{\infty}/X$.

\begin{thm}[{Gonet \cite[Theorem 1.1]{Gon22}, McGown--Valli\`{e}res \cite[Theorem 6.1]{MV21b}}]\label{thm:ICNF}
There exist integers 
\[
\lambda = \lambda(X_{\infty}/X) \geq 0, 
\quad
\mu = \mu(X_{\infty}/X) \geq 0,
\quad
\nu = \nu(X_{\infty}/X)
\]
 such that we have
\[
\ord_p(\# \Jac(X_n)) = \lambda n + \mu p^n + \nu
\]
for $n \gg 0$.
Moreover, we have
\[
\lambda(X_{\infty}/X) = \lambda(\Jac_{\Z_p}(X_{\infty})) = \lambda(\Pic_{\Z_p}(X_{\infty})) - 1
\]
and
\[
\mu(X_{\infty}/X) = \mu(\Jac_{\Z_p}(X_{\infty})) = \mu(\Pic_{\Z_p}(X_{\infty})).
\]
\end{thm}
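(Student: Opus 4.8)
The plan is to deduce everything from the Picard module, where an exact control statement is available (Corollary \ref{cor:Pic_coinv}), and to pass to the Jacobian module only at the end. Put $R = \Z_p[[\Gamma]]$; fixing a topological generator $\gamma$ of $\Gamma \cong \Z_p$ identifies $R$ with the power series ring $\Z_p[[T]]$, and we set $\omega_n = \gamma^{p^n} - 1 \in R$, so that $R/\omega_n R \cong \Z_p[\Gamma/\Gamma^{p^n}]$. Write $M = \Pic_{\Z_p}(X_\infty)$ and $N = \Jac_{\Z_p}(X_\infty)$. Since the order of $\Gamma \cong \Z_p$ is divisible by $p^\infty$, the equivalent conditions of Corollary \ref{cor:inj} hold (Remark \ref{rem:Lap_inj}): thus $M$ is torsion, and by Proposition \ref{prop:Pic_ex_inf} it is the cokernel of an endomorphism of the free $R$-module $R \otimes_\Z \Div(X)$, hence finitely generated; consequently its submodule $N$ (Proposition \ref{prop:JP_ex_inf}) is finitely generated and torsion too. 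By Corollary \ref{cor:Pic_coinv}, $M/\omega_n M \cong \Pic_{\Z_p}(X_n) = \Z_p \otimes_\Z \Pic(X_n)$ (the last equality since $\Gamma/\Gamma^{p^n}$ is finite). As $X_n$ is connected (Assumption \ref{ass:conn}), the exact sequence \eqref{eq:JP_ex} for $X_n$ splits over $\Z$, so $\Z_p \otimes_\Z \Pic(X_n) \cong (\Z_p \otimes_\Z \Jac(X_n)) \oplus \Z_p$, and $\Jac(X_n)$ is finite (Theorem \ref{thm:Kirch}). Hence
\[
\ord_p(\#\Jac(X_n)) = \ord_p\bigl(\#(M/\omega_n M)_{\mathrm{tors}}\bigr)
\qquad\text{and}\qquad
\rank_{\Z_p}(M/\omega_n M) = 1 \ \text{for all } n \geq 0.
\]

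I would then invoke the standard asymptotic formula for the coinvariants of a finitely generated torsion module over $R \cong \Z_p[[T]]$, in the refined form that allows a free $\Z_p$-quotient: if $M$ is such a module and $r$ denotes the (eventually constant) value of $\rank_{\Z_p}(M/\omega_n M)$, then there is a constant $\nu$ with
\[
\ord_p\bigl(\#(M/\omega_n M)_{\mathrm{tors}}\bigr) = \mu(M)\,p^n + (\lambda(M) - r)\,n + \nu
\qquad (n \gg 0).
\]
This may be cited (cf.\ Washington \cite[\S 13.3]{Was97}, \cite{NSW08}); failing a reference in exactly this shape, it follows from the structure theorem, since a pseudo-isomorphism changes $\#(\,\cdot\,)_{\mathrm{tors}}$ by $O(1)$ and, for an elementary module, the summand $R/(p^a)$ contributes $a\,p^n$ to the torsion part, the summand $R/(h^d)$ with $h$ a distinguished irreducible dividing no $\omega_m$ contributes $d\deg(h)\,n + O(1)$, and the summand $R/(\Phi^c)$ with $\Phi$ a distinguished irreducible divisor of some $\omega_m$ contributes $\deg(\Phi)$ to the free rank and $(c-1)\deg(\Phi)\,n + O(1)$ to the torsion part; summing gives $r = \sum_\Phi \deg\Phi$ and slope $\lambda(M) - r$.

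Applying this with the above $M$ and $r = 1$ yields $\ord_p(\#\Jac(X_n)) = \mu(M)\,p^n + (\lambda(M)-1)\,n + \nu$ for $n \gg 0$, which is the asserted formula, with $\mu(X_\infty/X) = \mu(M) \geq 0$ and $\lambda(X_\infty/X) = \lambda(M) - 1$. Finally, I would use the additivity of $\lambda$ and $\mu$ along a short exact sequence of finitely generated torsion $R$-modules, applied to $0 \to N \to M \to \Z_p \to 0$ (Proposition \ref{prop:JP_ex_inf}): since $\Z_p \cong R/(T)$ has $\mu(\Z_p) = 0$ and $\lambda(\Z_p) = 1$, we get $\mu(M) = \mu(N)$ and $\lambda(M) = \lambda(N) + 1$, hence
\[
\mu(X_\infty/X) = \mu(\Jac_{\Z_p}(X_\infty)) = \mu(\Pic_{\Z_p}(X_\infty)),
\quad
\lambda(X_\infty/X) = \lambda(\Jac_{\Z_p}(X_\infty)) = \lambda(\Pic_{\Z_p}(X_\infty)) - 1,
\]
and $\lambda(X_\infty/X) = \lambda(N) \geq 0$ in particular.

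The step I expect to be the main obstacle is the refined module-theoretic formula above: one must separate the factors of the characteristic ideal of $M$ that divide some $\omega_m$ — these inflate the free $\Z_p$-rank rather than the torsion order — from the remaining factors, and verify that the gap between $\lambda(M)$ and the true slope of $\ord_p(\#(M/\omega_n M)_{\mathrm{tors}})$ equals $r$ exactly. Everything else is formal once two graph-theoretic facts are in place: the exact control isomorphism $\Z_p \otimes_\Z \Pic(X_n) \cong M/\omega_n M$ and $\rank_\Z \Pic(X_n) = 1$, the latter being where the connectedness of every layer (Assumption \ref{ass:conn}) enters.
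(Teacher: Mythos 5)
Your proposal is correct and follows essentially the same route as the paper: reduce via the control isomorphism $\Pic_{\Z_p}(X_n) \simeq \Pic_{\Z_p}(X_\infty)_{\Gamma^{p^n}}$ (Corollary \ref{cor:Pic_coinv}), apply a module-theoretic asymptotic formula for coinvariants of finitely generated torsion $\Z_p[[\Gamma]]$-modules that allows a stable free $\Z_p$-quotient (the paper packages this as Proposition \ref{prop:CNF1}, citing Greenberg, where you supply a sketch from the structure theorem), and finally transfer $\lambda,\mu$ between $\Pic$ and $\Jac$ via the exact sequence $0\to\Jac_{\Z_p}(X_\infty)\to\Pic_{\Z_p}(X_\infty)\to\Z_p\to 0$ of Proposition \ref{prop:JP_ex_inf}. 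The only difference is the reference used for the standard module-theoretic input, which is immaterial.
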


To prove this theorem, we apply the following algebraic proposition.

\begin{prop}\label{prop:CNF1}
Let $\Gamma$ be a profinite group that is isomorphic to $\Z_p$.
Let $M$ be a finitely generated torsion $\Z_p[[\Gamma]]$-module.
Then there exist integers $\lambda \geq 0$, $\mu \geq 0$, and $\nu$ such that 
\[
\ord_p(\# (M_{\Gamma^{p^n}}[p^{\infty}])) = \lambda n + \mu p^n + \nu
\]
for $n \gg 0$.
Here, $M_{\Gamma^{p^n}}$ denotes the coinvariant module and $(-)[p^{\infty}]$ denotes the $p$-power torsion submodule.
Indeed, the integers $\lambda$ and $\mu$ are determined as
\[
\lambda = \lambda(M) - \max_{n \geq 0} \rank_{\Z_p}(M_{\Gamma^{p^n}}),
\qquad
\mu = \mu(M).
\]
\end{prop}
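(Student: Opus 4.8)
I would follow the classical route for Iwasawa-type asymptotics: use the structure theory of finitely generated torsion modules over $\Lambda := \Z_p[[\Gamma]] \cong \Z_p[[T]]$ to reduce to elementary modules, and then compute directly. Fix a topological generator $\gamma$ of $\Gamma$, put $T = \gamma - 1$ and $\omega_n = (1+T)^{p^n} - 1$, so that $M_{\Gamma^{p^n}} = M/\omega_n M$ and the $\omega_n$ form a chain $\omega_n \mid \omega_{n+1}$, each $\omega_n$ being a squarefree distinguished polynomial of degree $p^n$. By the structure theorem there is a pseudo-isomorphism $\phi\colon M \to E$ with $E = \bigoplus_k \Lambda/(g_k)$ elementary (each $g_k$ equal to $p^{a_k}$ or to a power $f_k^{b_k}$ of a distinguished irreducible), and with $\ker\phi$, $\coker\phi$ finite.

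The first half of the argument is a reduction to $M = E$, and then, by additivity of all quantities involved, to a single $\Lambda/(g)$. Two observations make this work. First, concerning $\Z_p$-ranks: since a finite $\Lambda$-module is killed by $\omega_n$ for $n \gg 0$, and since $\ker\phi,\coker\phi$ are finite, a snake-lemma comparison gives $\rank_{\Z_p}(M_{\Gamma^{p^n}}) = \rank_{\Z_p}(E_{\Gamma^{p^n}}) = \sum_k \deg\gcd(g_k,\omega_n)$ for \emph{every} $n$; as $\omega_n \mid \omega_{n+1}$ this is non-decreasing in $n$, so $\max_n\rank_{\Z_p}(M_{\Gamma^{p^n}})$ equals its stable value $\rho$. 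Second, concerning the order of the torsion: splitting $\phi$ as $M \twoheadrightarrow M/\ker\phi \hookrightarrow E$ and running the snake lemma for multiplication by $\omega_n$ on each of the two short exact sequences, the relevant connecting maps have image of the form $\omega_n M \cap \ker\phi$, respectively $(E[\omega_n] + M/\ker\phi)/(M/\ker\phi)$; these are monotone (decreasing, resp. increasing) families of subgroups of a fixed finite group, hence eventually constant. Therefore for $n \gg 0$ there are short exact sequences relating $M_{\Gamma^{p^n}}$ and $E_{\Gamma^{p^n}}$ whose remaining term is a fixed finite $p$-group; passing to $p$-power torsion (which preserves exactness, the error being a finite $p$-group) shows that $\ord_p\#(M_{\Gamma^{p^n}}[p^\infty])$ and $\ord_p\#(E_{\Gamma^{p^n}}[p^\infty])$ differ by a constant for $n \gg 0$. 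Since $\lambda(\cdot)$, $\mu(\cdot)$, $\rho$ and $\ord_p\#((-)_{\Gamma^{p^n}}[p^\infty])$ are all unchanged by this reduction and additive over direct sums, it suffices to treat $M = \Lambda/(p^a)$ and $M = \Lambda/(f^b)$ with $f$ distinguished irreducible of degree $d$.

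For $M = \Lambda/(p^a)$: $M_{\Gamma^{p^n}} = (\Z/p^a)[T]/(\omega_n \bmod p^a)$ is free of rank $p^n$ over $\Z/p^a$ because $\omega_n$ is distinguished of degree $p^n$, so $\ord_p\#(M_{\Gamma^{p^n}}[p^\infty]) = a p^n$, matching $\lambda = 0 = \lambda(M) - \rho$ and $\mu = a = \mu(M)$. For $M = \Lambda/(f^b)$ there are two cases. If $f$ is coprime to every $\omega_n$ (i.e.\ $f \neq T$ and $f$ divides no $\omega_m/\omega_{m-1}$), then every $\Lambda/(f^b,\omega_n)$ is finite, $\rho = 0$, and a Weierstrass/resultant estimate gives $\ord_p(\omega_n(\alpha)) = n + O(1)$ at each root $\alpha$ of $f$ — valid since $f$ distinguished forces $\ord_p(\alpha) > 0$ and $1+\alpha$ is not a $p$-power root of unity — whence $\ord_p\#(M_{\Gamma^{p^n}}[p^\infty]) = bdn + O(1)$ for $n \gg 0$, matching $\lambda = bd = \lambda(M) - 0$. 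If instead $f \mid \omega_n$ for all $n \gg 0$ (the cases $f = T$ and $f \mid \omega_m/\omega_{m-1}$), write $\omega_n = f h_n$ with $\gcd(f,h_n) = 1$; from $(f^b,\omega_n) = f\,(f^{b-1},h_n)$ one gets an exact sequence $0 \to \Lambda/(f^{b-1},h_n) \to \Lambda/(f^b,\omega_n) \to \Lambda/(f) \to 0$, and since $\Lambda/(f)$ is $\Z_p$-free of rank $d$ the $p$-power torsion of the middle term is exactly $\Lambda/(f^{b-1},h_n)$, whose order, estimated via $h_n(\alpha) = \omega_n'(\alpha)/f'(\alpha)$ and $\ord_p(\omega_n'(\alpha)) = n$, is $(b-1)dn + O(1)$ for $n \gg 0$; here $\rho = d$, so $\lambda = (b-1)d = \lambda(M) - \rho$ and $\mu = 0$, as required. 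Assembling the pieces by additivity yields the proposition with $\lambda = \lambda(M) - \rho$ and $\mu = \mu(M)$.

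I expect the last case — $M = \Lambda/(f^b)$ with $f$ eventually dividing $\omega_n$ — to be the essential point: this is precisely where the correction $-\max_n\rank_{\Z_p}(M_{\Gamma^{p^n}})$ is forced (the naive guess $\lambda = \lambda(M)$ being wrong), and it requires splitting off the $\Z_p$-free part of the coinvariants before measuring the torsion. The other point needing care is ensuring the error in the pseudo-isomorphism reduction is genuinely \emph{eventually constant} rather than merely bounded; this is handled by the monotone-subgroup observation above, and is otherwise routine.
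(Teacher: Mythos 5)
The paper does not prove this proposition; it simply cites Greenberg \cite[item (4), p.~79]{Gree99} and notes that the structure theorem for $\Z_p[[\Gamma]]$-modules is the key input. Your argument is exactly the standard structure-theorem proof that citation points to (pseudo-isomorphism to an elementary module, explicit computation of $\Lambda/(g,\omega_n)$ via roots and resultants, with the rank correction coming from the summands $\Lambda/(f^b)$ with $f$ eventually dividing $\omega_n$), so in substance you take the same route as the paper's source. One caution: the phrase ``passing to $p$-power torsion (which preserves exactness, the error being a finite $p$-group)'' is too quick. For a short exact sequence $0 \to A \to B \to C \to 0$ of finitely generated $\Z_p$-modules with $A$ finite you do get $\ord_p\#B[p^\infty] = \ord_p\#A + \ord_p\#C[p^\infty]$ (the map on torsion is onto with kernel $A$), but with $C$ finite you only get that $B[p^\infty]/A[p^\infty]$ embeds into $C$; its order is not $\ord_p\#C$ and is not \emph{a priori} $n$-independent. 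So after your four-term comparison $0 \to (\text{finite}) \to M_{\Gamma^{p^n}} \to E_{\Gamma^{p^n}} \to (\text{finite}) \to 0$ you still need to show that this extra subgroup of the right-hand error term stabilises for $n \gg 0$. This does follow by another monotonicity argument of the same flavour as the ones you already give (e.g.\ by analysing $\phi(M)\cap (\text{torsion part of }E)$ modulo $(f^b,\omega_n)$ on each summand $\Lambda/(f^b)$), so the gap is minor and patchable, but as written that step outruns the justification supplied.
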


\begin{proof}
See Greenberg \cite[item (4) in page 79]{Gree99}.
A key ingredient is the structure theorem for modules over $\Z_p[[\Gamma]]$.
\end{proof}

\begin{proof}[Proof of Theorem \ref{thm:ICNF}]
Let us observe the following:
\begin{itemize}
\item
For any $n \geq 0$, we have $\rank_{\Z_p}(\Pic_{\Z_p}(X_n)) = 1$ and $\Jac_{\Z_p}(X_n) = \Pic_{\Z_p}(X_n)[p^{\infty}]$ by \eqref{eq:JP_ex} and Theorem \ref{thm:Kirch}.
\item
For any $n \geq 0$, we have $\Pic_{\Z_p}(X_n) \simeq \Pic_{\Z_p}(X_{\infty})_{\Gamma^{p^n}}$ by Corollary \ref{cor:Pic_coinv}.
\item
$\Pic_{\Z_p}(X_{\infty})$ is a finitely generated torsion $\Z_p[[\Gamma]]$-module by Corollary \ref{cor:inj}.
\end{itemize}
Then, applying Proposition \ref{prop:CNF1} to $M = \Pic_{\Z_p}(X_{\infty})$, we find integers $\lambda \geq 0$, $\mu \geq 0$, and $\nu$ such that $\ord_p(\# \Jac(X_n)) = \lambda n + \mu p^n + \nu$ for $n \gg 0$.
Moreover, we have 
\[
\lambda = \lambda(\Pic_{\Z_p}(X_{\infty})) - \max_{n \geq 0} \rank_{\Z_p}(\Pic_{\Z_p}(X_{\infty})_{\Gamma^{p^n}})
\]
and $\mu = \mu(\Pic_{\Z_p}(X_{\infty}))$.
Since we have 
\[
\rank_{\Z_p}(\Pic_{\Z_p}(X_{\infty})_{\Gamma^{p^n}})
= \rank_{\Z_p}(\Pic_{\Z_p}(X_n))
= 1
\]
for any $n \geq 0$, we obtain $\lambda = \lambda(\Pic_{\Z_p}(X_{\infty})) - 1$.
Then Proposition \ref{prop:JP_ex_inf} implies that these integers $\lambda$, $\mu$ coincide respectively with the $\lambda$-, $\mu$-invariants of $\Jac_{\Z_p}(X_{\infty})$.
This completes the proof.
\end{proof}

\begin{rem}
By Proposition \ref{prop:Pic_ex_inf}, the characteristic ideal of $\Pic_{\Z_p}(X_{\infty})$ is equal to $(Z_{X, \Gamma})$.
Therefore, the invariants $\lambda$ and $\mu$ in Theorem \ref{thm:ICNF} are determined by $Z_{X, \Gamma}$.
Note that, as we observed in \S \ref{ss:Ihara}, the three-term determinant formula provides an analytic interpretation of the element $Z_{X, \Gamma}$.
\end{rem}

\subsection{Kida's formula for graphs}\label{ss:Kida}

Let us go on to Kida's formula for graphs.
We do not review the original Kida's formula for ideal class groups, proved by Kida \cite{Kid80}, and instead refer to \cite{RV22} for the literatures.

Let $\wtil{\Gamma}$ be a profinite group that is decomposed as
\[
\wtil{\Gamma} = G \times \Gamma,
\]
where $G$ is a finite $p$-group (that is not necessarily abelian) and $\Gamma$ is isomorphic to $\Z_p$.
Let $(X, \wtil{\Gamma}, \alpha)$ be a voltage graph.
We suppose Assumption \ref{ass:conn} for $(X, \wtil{\Gamma}, \alpha)$.

Now $(X, \wtil{\Gamma}, \alpha)$ induces a voltage graph $(X, \Gamma, \alpha_{/G})$, where $\alpha_{/G}$ is the composite map of $\alpha$ and the natural projection $\wtil{\Gamma} \to \Gamma$.
Therefore, we obtain a $\Z_p$-covering $X_{\infty} = X(\Gamma)$ of $X$ and can consider its $\lambda$-, $\mu$-invariants $\lambda(X_{\infty}/X)$, $\mu(X_{\infty}/X)$ as in Theorem \ref{thm:ICNF}.

On the other hand, as discussed in \cite[\S 3]{RV22}, we also obtain a $\Z_p$-covering $\wtil{X}_{\infty} = X(\wtil{\Gamma})$ of $\wtil{X} = X(G)$.
Therefore, we also have the $\lambda$-, $\mu$-invariants $\lambda(\wtil{X}_{\infty}/\wtil{X})$, $\mu(\wtil{X}_{\infty}/\wtil{X})$.

This situation should be roughly regarded as a $G$-covering of $\Z_p$-coverings.
Indeed, the graph $\wtil{X}$ is a $G$-covering of $X$, and more generally the $n$-th layer of $\wtil{X}_{\infty}/\wtil{X}$ is a $G$-covering of the $n$-th layer of $X_{\infty}/X$.
Conversely, any $G$-covering of $\Z_p$-coverings of connected graphs in this sense can be constructed in this way (see \cite[\S 3]{RV22} for more precise discussion).

Now we state the analogue of Kida's formula.

\begin{thm}[{Ray--Valli\`eres \cite[Theorem 4.1]{RV22}}]\label{thm:Kida}
We have $\mu(\wtil{X}_{\infty}/\wtil{X}) = 0$ if and only if $\mu(X_{\infty}/X) = 0$.
If these equivalent conditions hold, we have
\[
\lambda(\wtil{X}_{\infty}/\wtil{X}) + 1 = \# G \cdot (\lambda(X_{\infty}/X) + 1).
\]
\end{thm}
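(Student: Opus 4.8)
The plan is to reduce Theorem \ref{thm:Kida} to a statement about the Picard modules of the two towers, using Theorem \ref{thm:ICNF} to pass between covering‑invariants and module‑invariants and Corollary \ref{cor:Pic_coinv} to compare the towers. Write $\wtil{\Gamma} = G\times\Gamma$, $A = \Z_p[[\Gamma]]$, $B = \Z_p[[\wtil{\Gamma}]]$; since $G$ is finite, $B = A[G]$ is free of rank $\#G$ over $A$. Because $\Gamma\simeq\Z_p$, the orders of $\wtil{\Gamma}$ and of $\Gamma$ are divisible by $p^{\infty}$, so the equivalent conditions of Corollary \ref{cor:inj} hold for both voltage graphs $(X,\wtil{\Gamma},\alpha)$ and $(X,\Gamma,\alpha_{/G})$; in particular $\Pic_{\Z_p}(\wtil{X}_\infty)$ and $\Pic_{\Z_p}(X_\infty)$ are torsion, Theorem \ref{thm:ICNF} applies to both towers (their connectivity hypotheses follow from Assumption \ref{ass:conn} for $(X,\wtil{\Gamma},\alpha)$), and by Corollary \ref{cor:Pic_coinv} applied to the closed normal subgroup $G\subset\wtil{\Gamma}$ one gets $\Pic_{\Z_p}(\wtil{X}_\infty)_G\simeq\Pic_{\Z_p}(X_\infty)$. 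Setting $P=\Pic_{\Z_p}(\wtil{X}_\infty)$, Theorem \ref{thm:ICNF} thus reduces Theorem \ref{thm:Kida} to the purely module‑theoretic claim that, over $A$,
\[
\mu_A(P)=0 \iff \mu_A(P_G)=0, \qquad\text{and in that case}\qquad \lambda_A(P)=\#G\cdot\lambda_A(P_G).
\]

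Next I would compute the relevant characteristic ideals over $A$. By Proposition \ref{prop:Pic_ex_inf} there is an exact sequence $0\to B\otimes_{\Z}\Div(X)\xrightarrow{\cL_{X,\wtil{\Gamma}}}B\otimes_{\Z}\Div(X)\to P\to 0$ of $B$‑modules; viewing it as a sequence of $A$‑modules via $B\simeq A^{\#G}$ (exactly as in the proof of Theorem \ref{thm:IMC}) gives a presentation of $P$ by a square matrix over $A$, so $\cha_A(P)=(\det_A\cL_{X,\wtil{\Gamma}})=(N_{B/A}(Z_{X,\wtil{\Gamma}}))$, where $N_{B/A}$ denotes the norm, i.e.\ the determinant of multiplication. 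Likewise $\cha_A(P_G)=(Z_{X,\Gamma})$, and $Z_{X,\Gamma}=\pi(Z_{X,\wtil{\Gamma}})$ for the projection $\pi\colon B\to A$ induced by $\wtil{\Gamma}\to\Gamma$, since $\cL_{X,\Gamma}$ is obtained from $\cL_{X,\wtil{\Gamma}}$ by applying $\pi$ entrywise. As $\mu$ and $\lambda$ of a torsion module depend only on a generator $h$ of its characteristic ideal — with $\mu(h)=0\iff h\bmod p\neq 0$ in $A/pA\simeq\F_p[[T]]$, and then $\lambda(h)=v_T(h\bmod p)$ — it suffices to prove the following.

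\emph{Algebraic lemma.} For every $z\in B=A[G]$ one has, in $\ol{A}:=A/pA$, the identity $N_{B/A}(z)\bmod p=(\pi(z)\bmod p)^{\#G}$. I would prove this by reducing mod $p$: the norm commutes with the base change $A\to\ol{A}$, so $N_{B/A}(z)\bmod p=N_{\ol{A}[G]/\ol{A}}(\ol{z})$. Now $\ol{A}[G]=\F_p[[T]][G]$, and since $G$ is a finite $p$‑group the augmentation ideal $I_G\subset\F_p[G]$ is nilpotent; hence $I_G\ol{A}[G]$ is a nilpotent ideal of $\ol{A}[G]$ with quotient $\ol{A}$, and $\ol{z}=\pi(\ol{z})+n$ with $n\in I_G\ol{A}[G]$ nilpotent. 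Multiplication by $n$ on the free $\ol{A}$‑module $\ol{A}[G]$ is a nilpotent $\ol{A}$‑linear endomorphism (its characteristic polynomial is $X^{\#G}$ because $\ol{A}$ is a domain), so multiplication by $\ol{z}=\pi(\ol{z})\cdot\id+(\text{mult.\ by }n)$ has characteristic polynomial $(X-\pi(\ol{z}))^{\#G}$, and taking determinants gives $N_{\ol{A}[G]/\ol{A}}(\ol{z})=\pi(\ol{z})^{\#G}$. Applying the lemma to $z=Z_{X,\wtil{\Gamma}}$: as $\F_p[[T]]$ is a domain, $N_{B/A}(Z_{X,\wtil{\Gamma}})\bmod p\neq 0\iff Z_{X,\Gamma}\bmod p\neq 0$, which is the $\mu$ statement; and when this holds, $\lambda_A(N_{B/A}(Z_{X,\wtil{\Gamma}}))=v_T((Z_{X,\Gamma}\bmod p)^{\#G})=\#G\cdot\lambda_A(Z_{X,\Gamma})$, which is the $\lambda$ statement. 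Translating back through Theorem \ref{thm:ICNF} yields Theorem \ref{thm:Kida}.

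The bookkeeping — identifying the derived graphs $\wtil{X}$, $\wtil{X}_\infty$, $X_\infty$ and the groups acting on them, checking that $\Pic_{\Z_p}(X(\wtil{\Gamma}))$ really is the inverse limit along the $\Z_p$‑tower $\wtil{X}_\infty/\wtil{X}$, and verifying the hypotheses of Theorem \ref{thm:ICNF} and Corollary \ref{cor:inj} for both sub‑towers — is routine but should be written out carefully. The only genuinely non‑formal ingredient is the algebraic lemma: reduction mod $p$ collapses the norm $N_{B/A}$ into an exact $\#G$‑th power of the augmentation. This is precisely where the hypothesis that $G$ is a $p$‑group enters (through the nilpotence of $I_G\subset\F_p[G]$), and it can be regarded as the algebraic shadow of the Riemann–Hurwitz genus formula that underlies Kida's formula.
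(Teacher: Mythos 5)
Your proposal is correct, but it takes a genuinely different route from the paper. The paper's proof isolates a purely module-theoretic statement (Proposition \ref{prop:Kida_alg}): a finitely generated torsion $\Z_p[[\wtil{\Gamma}]]$-module $M$ of projective dimension $\leq 1$ that is $\Z_p$-finitely generated is actually $\Z_p$-free (no finite submodules) and $G$-cohomologically trivial (finite projective dimension over $\Z_p[[\Gamma]][G]$), hence free over $\Z_p[G]$, so $\rank_{\Z_p}(M) = \#G \cdot \rank_{\Z_p}(M_G)$; applying this to $P = \Pic_{\Z_p}(\wtil{X}_\infty)$ via Proposition \ref{prop:Pic_ex_inf} and Corollary \ref{cor:Pic_coinv} gives the result at the level of module structure, without ever writing down a characteristic element. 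You instead descend characteristic ideals: from the square presentation of $P$ over $B = \Z_p[[\wtil{\Gamma}]]$ you extract $\cha_A(P) = (N_{B/A}(Z_{X,\wtil{\Gamma}}))$ over $A = \Z_p[[\Gamma]]$, note that $Z_{X,\Gamma} = \pi(Z_{X,\wtil{\Gamma}})$, and then prove the norm identity $N_{B/A}(z) \equiv \pi(z)^{\#G} \pmod p$ by exploiting nilpotence of $I_G \subset \F_p[G]$, after which $\mu$ and $\lambda$ are read off from the characteristic element mod $p$. Both arguments are sound, and both ultimately use that $G$ is a $p$-group, but in different guises: the paper uses it through the cohomological triviality criterion for freeness over $\Z_p[G]$, while you use it through $\F_p[G]$ being local with nilpotent augmentation ideal. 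What the paper's approach buys is economy and conceptual cleanliness — a one-line deduction from a structure theorem, with no need to invoke Weierstrass preparation or compute anything. What your approach buys is an explicit descent formula for the characteristic ideal, namely $\cha_A(\Pic_{\Z_p}(\wtil{X}_\infty)) = (N_{B/A}(Z_{X,\wtil{\Gamma}}))$, which links Kida's formula directly to the equivariant Ihara element $Z_{X,\wtil{\Gamma}}$ and is reminiscent of Sinnott's genus-theoretic proof of Kida's formula; this identity could be of independent interest and is not made visible by the paper's route. The bookkeeping points you flag (identifying $\Pic_{\Z_p}(X(\wtil{\Gamma}))$ with the inverse limit along the $\Z_p$-tower $\wtil{X}_\infty/\wtil{X}$, and that Assumption \ref{ass:conn} for $(X,\wtil{\Gamma},\alpha)$ yields connectivity for $(X,\Gamma,\alpha_{/G})$) are indeed routine and correct.
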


To prove this theorem, we use the following key algebraic proposition.

\begin{prop}\label{prop:Kida_alg}
Let $G$ be a finite $p$-group.
The following hold.
\begin{itemize}
\item[(1)]
A $\Z_p[G]$-module $M$ is finitely generated over $\Z_p$ if and only if so is the coinvariant module $M_G$.
\item[(2)]
Let $\wtil{\Gamma} = G \times \Gamma$ with $\Gamma$ isomorphic to $\Z_p$.
Let $M$ a finitely generated torsion $\Z_p[[\wtil{\Gamma}]]$-module whose projective dimension over $\Z_p[[\wtil{\Gamma}]]$ is $\leq 1$.
Suppose moreover that the equivalent conditions in (1) hold.
Then we have
\[
\rank_{\Z_p}(M) = \# G \cdot \rank_{\Z_p}(M_G).
\]
\end{itemize}
\end{prop}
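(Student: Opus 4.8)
The plan is to establish the two parts in turn, part (2) being where the real work lies.

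\emph{Part (1).} One implication is immediate: $M_G = M/I_GM$, where $I_G\subset\Z_p[G]$ is the augmentation ideal, is a quotient of $M$, hence is finitely generated over $\Z_p$ whenever $M$ is. For the converse, note that $\Z_p[G]$ is a local ring whose maximal ideal contains $I_G$ — since $G$ is a finite $p$-group, $\F_p[G]$ is local with nilpotent maximal ideal $I_G$. Thus, if $M_G = M/I_GM$ is finitely generated over $\Z_p = \Z_p[G]/I_G$, then a lift to $M$ of a finite set of $\Z_p$-generators of $M_G$ generates $M$ over $\Z_p[G]$ by (the topological form of) Nakayama's lemma, which applies because the modules occurring in our applications are compact. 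Since $\Z_p[G]$ is finitely generated over $\Z_p$, so then is $M$. (Alternatively one runs a d\'evissage along a composition series of $G$, reducing to $G\simeq\Z/p$.)

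\emph{Part (2).} Write $R=\Z_p[[\wtil{\Gamma}]]$, a (possibly non-commutative) local ring, let $\gamma$ be a topological generator of $\Gamma$, set $T=\gamma-1$, and put $\Lambda=\Z_p[[\Gamma]]\simeq\Z_p[[T]]$. The structural facts to exploit are that $\Lambda$ is a \emph{central} subring of $R$ and that $R$ is free of rank $\#G$ over $\Lambda$. Since $\pd_R(M)\le 1$ and $R$ is local, $M$ has a free resolution $0\to R^{b_1}\xrightarrow{\Phi}R^{b_0}\to M\to 0$; restricting scalars to $\Lambda$ and using that $M$ is $\Lambda$-torsion (it is finitely generated over $\Z_p$) forces $b_1=b_0=:b$. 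Hence, viewed over $\Lambda$, the map $\Phi$ is an injective endomorphism $\Phi'$ of $\Lambda^{b\#G}$ with cokernel $M$, so $\det\Phi'\in\Lambda\setminus\{0\}$ generates $\Fitt_0(M)=\cha_{\Lambda}(M)$; since $\mu(M)=0$ by hypothesis, standard structure theory gives $\det\Phi'=(\text{unit})\cdot f$ with $f\in\Z_p[[T]]$ distinguished of degree $\lambda(M)=\rank_{\Z_p}(M)$, so that
\[
\det\Phi'\equiv(\text{unit})\cdot T^{\,\rank_{\Z_p}(M)}\pmod p .
\]
Applying $-\otimes_R\Lambda$, i.e.\ reducing $\Phi$ modulo the two-sided ideal $I_GR$ (with $R/I_GR=\Lambda$), yields $\Lambda^b\xrightarrow{\bar\Phi}\Lambda^b\to M_G\to 0$; as $M_G$ is $\Lambda$-torsion, $\bar\Phi$ is injective and $\Tor_1^R(M,\Lambda)=0$, so $M_G=\Coker(\bar\Phi)$ is presented by the square matrix $\bar\Phi$, and the same reasoning gives
\[
\det\bar\Phi\equiv(\text{unit})\cdot T^{\,\rank_{\Z_p}(M_G)}\pmod p .
\]

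It remains to compare $\det\Phi'$ with $\det\bar\Phi$ modulo $p$. After reduction, $\Phi'\bmod p$ is precisely the matrix over $\F_p[[T]]$ obtained from $\Phi\bmod p$ (a matrix over $\F_p[G][[T]]=\bigoplus_{g\in G}g\,\F_p[[T]]$) by replacing each entry $x$ with the matrix of left multiplication by $x$ on $\F_p[G]\simeq\F_p^{\#G}$. Now $\F_p[G]$ is a local $\F_p$-algebra with nilpotent maximal ideal $I_G$, and each graded quotient $I_G^{\,j}/I_G^{\,j+1}$ is a module over $\F_p[G]/I_G=\F_p$; hence, with respect to the $I_G$-adic filtration, left multiplication by any $x\in\F_p[G]$ is block-upper-triangular with diagonal blocks the scalar $\epsilon(x)\cdot\mathrm{id}$, where $\epsilon$ is the augmentation. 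Consequently, for any square matrix $\Psi$ over $\F_p[G][[T]]$, the determinant over $\F_p[[T]]$ of its left-regular-representation expansion equals $\det(\Psi\bmod I_G)^{\#G}$ (the exponent being $\sum_j\dim_{\F_p}(I_G^{\,j}/I_G^{\,j+1})=\#G$). Taking $\Psi=\Phi\bmod p$ gives $\det\Phi'\equiv\det(\bar\Phi)^{\#G}\pmod p$. Combining this with the two displayed congruences, $(\text{unit})\cdot T^{\,\rank_{\Z_p}(M)}=(\text{unit})\cdot T^{\,\#G\cdot\rank_{\Z_p}(M_G)}$ in $\F_p[[T]]$, whence $\rank_{\Z_p}(M)=\#G\cdot\rank_{\Z_p}(M_G)$.

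I expect the main obstacle to be exactly this last comparison of determinants: this is why the argument passes to the central subring $\Lambda$ (so that an honest determinant is available even when $G$ is non-abelian and $\Z_p[G]$ is non-commutative) and then reduces modulo $p$ (which renders the regular representation ``unipotent'' and produces the clean exponent $\#G$). One should bear in mind that $\pd_R(M)\le 1$ is used essentially — to obtain a length-one, hence square, presentation over $\Lambda$ and the vanishing of $\Tor_1^R(M,\Lambda)$ — and is not automatic: the rank identity already fails for $M=\Z_p$ with trivial $G$-action.
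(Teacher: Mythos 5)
Your proof of part (2) is correct, but it follows a genuinely different route from the paper. The paper's proof is much shorter and more structural: it establishes that the hypotheses force $M$ to be \emph{free as a $\Z_p[G]$-module} of finite rank, from which the equality $\rank_{\Z_p}(M) = \#G\cdot\rank_{\Z_p}(M_G)$ is immediate. The two ingredients there are that $\pd_{\Z_p[[\Gamma]]}(M)\le 1$ (obtained by restriction of scalars) is equivalent to $M$ having no nonzero finite $\Z_p[[\Gamma]]$-submodule (hence $M$ is $\Z_p$-free once it is $\Z_p$-finitely generated), and that finiteness of $\pd_{\Z_p[[\Gamma]][G]}(M)$ makes $M$ $G$-cohomologically trivial; a $\Z_p$-free, $G$-cohomologically trivial module over a finite $p$-group is then $\Z_p[G]$-free by a standard result (cf.\ \cite[(5.2.21)]{NSW08}). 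Your argument instead works directly with a length-one free resolution over $R=\Z_p[[\wtil{\Gamma}]]$, passes to the central subring $\Lambda$, identifies $\cha_\Lambda(M)$ and $\cha_\Lambda(M_G)$ with determinants of the resulting square matrices, and relates the two by the filtration argument on $\F_p[G]$ that yields $\det\Phi'\equiv(\det\bar\Phi)^{\#G}\pmod p$. This is longer but more self-contained: it sidesteps the cohomological-triviality machinery at the cost of invoking the $\lambda$/$\mu$-structure theory and running a careful determinant computation. Your final remark that $\pd_R(M)\le 1$ is essential (and fails for $M=\Z_p$ with trivial $G$-action) is exactly the right sanity check, and your closing observation about $\Tor_1^R(M,\Lambda)=0$ is also correctly justified. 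For part (1), both you and the paper reduce to Nakayama's lemma over the local ring $\Z_p[G]$; as you note, this requires $M$ to be compact or finitely generated over $\Z_p[G]$ (which holds in all the applications), a mild imprecision shared with the paper's statement.

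One small suggestion: when you argue that $\det\Phi'$ generates $\Fitt_\Lambda(M)=\cha_\Lambda(M)$, it is worth stating explicitly that the length-one $\Lambda$-free resolution gives $\pd_\Lambda(M)\le 1$, which is precisely why the Fitting ideal and characteristic ideal agree; the general inclusion $\Fitt_\Lambda(M)\subset\cha_\Lambda(M)$ can be strict for torsion modules with finite submodules.
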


\begin{proof}
(1)
The ``only if'' part is clear.
The ``if'' part follows from Nakayama's lemma since $\Z_p[G]$ is a local ring.

(2)
Let us show that the assumptions imply that $M$ is a free $\Z_p[G]$-module of finite rank.
Then the assertion would follow immediately.

It is well-known that $\pd_{\Z_p[[\Gamma]]}(M) \leq 1$ is equivalent to that $M$ has no nonzero finite $\Z_p[[\Gamma]]$-submodules (e.g., \cite[(5.3.19)]{NSW08}).
Then being finitely generated over $\Z_p$ implies that $M$ is a free $\Z_p$-module of finite rank.
On the other hand, the finiteness of $\pd_{\Z_p[[\Gamma]][G]}(M)$ implies that $M$ is $G$-cohomologically trivial.
These observations show that $M$ is a free $\Z_p[G]$-module of finite rank (e.g., \cite[(5.2.21)]{NSW08}), as claimed.
\end{proof}

\begin{proof}[Proof of Theorem \ref{thm:Kida}]
Recall that Proposition \ref{prop:Pic_ex_inf} and Corollary \ref{cor:inj} show that $\Pic_{\Z_p}(\wtil{X}_{\infty})$ is a finitely generated torsion $\Z_p[[\wtil{\Gamma}]]$-module whose projective dimension is $\leq 1$.
We also have $\Pic_{\Z_p}(X_{\infty}) \simeq \Pic_{\Z_p}(\wtil{X}_{\infty})_G$ by Corollary \ref{cor:Pic_coinv}.
Therefore, we can apply Proposition \ref{prop:Kida_alg}(1)(2) to $\Pic_{\Z_p}(X_{\infty})$.
As a result, we have $\mu(\Pic_{\Z_p}(\wtil{X}_{\infty})) = 0$ if and only if $\mu(\Pic_{\Z_p}(X_{\infty})) = 0$ and, if these equivalent conditions hold, we have
\[
\lambda(\Pic_{\Z_p}(\wtil{X}_{\infty})) = \# G \cdot \lambda(\Pic_{\Z_p}(X_{\infty})).
\]
Since we have $\lambda(X_{\infty}/X) = \lambda(\Pic_{\Z_p}(X_{\infty})) - 1$ and $\mu(X_{\infty}/X) = \mu(\Pic_{\Z_p}(X_{\infty}))$, and similarly for $\wtil{X}_{\infty}/\wtil{X}$, this proves Theorem \ref{thm:Kida}.
\end{proof}

\appendix

\section{Notes on Fitting ideals}\label{sec:Fitt_gen}

In this section, we briefly review the definition of Fitting ideals and the shift theory introduced in \cite{Kata_05}.

Let $R$ be a commutative ring.
Note that we do not assume $R$ is noetherian.
This is because the coefficient rings arising from the arithmetic in this paper are not noetherian in general (e.g., $\hZ[[\hZ]]$).
In \cite{Kata_05}, the author developed the shift theory only over noetherian rings.
In this section, we observe that the argument can be generalized to non-noetherian rings by imposing appropriate finiteness properties on modules.

\subsection{Fitting ideals}\label{ss:Fitt_defn}

We recall the definition of Fitting ideals.
See, e.g., Northcott \cite[\S 3.1]{Nor76}.

\begin{defn}\label{defn:Fitt1}
Let $I$ be a finite set and $J$ a (not necessarily finite) set.
Let
\[
h: R^{\oplus J} \to R^{\oplus I}
\]
be an $R$-homomorphism, where $R^{\oplus I}$, $R^{\oplus J}$ denote the free modules on the set $I$, $J$ respectively.

We define the (initial) Fitting ideal $\Fitt(h) = \Fitt_R(h)$ as follows.
For each subset $J' \subset J$ with $\# J' = \# I$, we write 
\[
h_{J'}: R^{\oplus J'} \to R^{\oplus I}
\]
for the restriction of $h$.
Let $\det(h_{J'})$ be the determinant of $h_{J'}$ with respect to any choice of bases; inevitably this determinant has ambiguity up to $R^{\times}$, but this does not matter.
Then $\Fitt_R(h)$ is defined as the ideal of $R$ generated by $\det(h_{J'})$ for all $J' \subset J$ with $\# J' = \# I$.
Note that if $\# J < \# I$, we have $\Fitt_R(h) = 0$ as there is no such a $J'$.

More generally, for an integer $i \geq 0$, the $i$-th Fitting ideal $\Fitt_i(h) = \Fitt_{i, R}(h)$ is defined as follows.
For subsets $I' \subset I$ and $J' \subset J$ with $\# I' = \# J'$, we write 
\[
h_{J', I'}: R^{\oplus J'} \to R^{\oplus I'}
\]
for the map induced by $h$.
If $i \leq \# I$, we define $\Fitt_{i, R}(h)$ as the ideal of $R$ generated by $\det(h_{J', I'})$ for all $J' \subset J$, $I' \subset I$ with $\# J' = \# I' = \# I - i$.
If $i > \# I$, we set $\Fitt_{i, R}(h) = R$.
\end{defn}

\begin{defn}\label{defn:Fitt2}
Let $M$ be a finitely generated $R$-module.
We define $\Fitt_R(M)$ and $\Fitt_{i, R}(M)$ ($i \geq 0$) respectively as $\Fitt_R(h)$ and $\Fitt_{i, R}(h)$, where $h$ is a homomorphism as in Definition \ref{defn:Fitt1} such that the cokernel of $h$ is isomorphic to $M$.
It is known that this definition is independent from the choice of $h$.
\end{defn}

\subsection{Shifts of Fitting ideals}\label{ss:shift_defn}

We write $\Frac(R)$ for the total ring of fractions of $R$.
An $R$-module $M$ is said to be torsion if any element is annihilated by a non-zero-divisor of $R$; equivalently if $\Frac(R) \otimes_R M = 0$.

\begin{lem}\label{lem:tors_gen}
Let $F$ be a free $R$-module of finite rank and $h: F \to F$ be an endomorphism.
Then the following are equivalent.
\begin{itemize}
\item[(i)]
The map $h$ is injective.
\item[(ii)]
The determinant $\det(h) \in R$ is a non-zero-divisor.
\item[(iii)]
The cokernel $\Coker(h)$ is a torsion $R$-module.
\end{itemize}
\end{lem}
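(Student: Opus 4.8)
The plan is to prove the chain of implications (ii) $\Rightarrow$ (iii) $\Rightarrow$ (i) $\Rightarrow$ (ii); since these three arrows close up into a cycle, all three conditions become equivalent, and only the last arrow requires real work.

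For (ii) $\Rightarrow$ (iii), I would invoke the adjugate identity $h \circ \mathrm{adj}(h) = \mathrm{adj}(h) \circ h = \det(h)\cdot\id_F$. It shows $\det(h)\,F \subseteq h(F)$, so $\det(h)$ annihilates $\Coker(h)$. Since $\det(h)$ is a non-zero-divisor, every element of $\Coker(h)$ is killed by a non-zero-divisor, i.e. $\Coker(h)$ is torsion.

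For (iii) $\Rightarrow$ (i), write $S$ for the set of non-zero-divisors of $R$, so that $\Frac(R) = S^{-1}R$ is a flat $R$-algebra. Torsion-ness of $\Coker(h)$ means $\Frac(R)\otimes_R\Coker(h) = 0$, and by right exactness this equals $\Coker(\Frac(R)\otimes h)$; hence the endomorphism $\Frac(R)\otimes h$ of the finite free module $\Frac(R)\otimes F$ is surjective, therefore bijective, because a surjective endomorphism of a finitely generated module over a commutative ring is automatically injective (the standard Cayley--Hamilton argument). By flatness $\Frac(R)\otimes\Ker(h) = \Ker(\Frac(R)\otimes h) = 0$. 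But $\Ker(h)$ is a submodule of the free, hence torsion-free, module $F$, and a torsion-free module that dies after applying $S^{-1}(-)$ must be zero, since a nonzero element of it is annihilated by no non-zero-divisor and so survives. Therefore $\Ker(h) = 0$.

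The hard step is (i) $\Rightarrow$ (ii), which is McCoy's theorem, and I would include a proof of it. Fix a basis, write $A$ for the matrix of $h$, and argue by contraposition: assume $\det(h)$ is a zero-divisor, choose $a \neq 0$ with $a\det(h) = 0$, and produce a nonzero $v$ with $Av = 0$. Let $t \geq 1$ be the least integer such that some nonzero $b \in R$ annihilates every $t\times t$ minor of $A$; the set of such integers contains $n$ (witnessed by $a$) and not $0$ (the empty minor being $1$), so $t$ is well defined. If $t = 1$, then $bA = 0$ and $v = (b,0,\dots,0)^{\mathrm t}$ works. If $t \geq 2$, minimality forces $b$ not to annihilate all $(t-1)\times(t-1)$ minors, so after permuting rows and columns the leading $(t-1)\times(t-1)$ block $B$ satisfies $b\det(B)\neq 0$; I would then set $v_j$, for $1 \leq j \leq t$, to be $b$ times the $(t,j)$-cofactor of the $t\times t$ block on rows $1,\dots,t-1,i$ and columns $1,\dots,t$ (this cofactor is independent of $i$, since its computation deletes the $i$-th row), and $v_j = 0$ for $j > t$. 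Laplace expansion along the bottom row gives $(Av)_i = b\cdot(\text{a }t\times t\text{ minor of }A) = 0$ for $i \geq t$, and $(Av)_i = b\cdot(\text{a determinant with two equal rows}) = 0$ for $i \leq t-1$, while $v_t = \pm b\det(B) \neq 0$, so $v\neq 0$ and $Av = 0$, contradicting injectivity of $h$. I expect the sign bookkeeping and the case split between the two index ranges in this last construction to be the only fiddly part; everything else is formal.
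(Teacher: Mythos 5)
Your proof is correct. The two easy arrows are the same as the paper's: (ii)\,$\Rightarrow$\,(iii) via the adjugate identity, and (iii)\,$\Rightarrow$\,(i) by base change to $\Frac(R)$ together with the observation that $\Ker(h)$, being a submodule of a free module, is torsion-free (the paper is a little more terse on this last point but the content is identical). Where you diverge is in the remaining arrow: the paper outsources (i)\,$\Leftrightarrow$\,(ii) to Bourbaki (Alg.\ III, \S 8, Prop.\ 3), whereas you close the cycle by proving (i)\,$\Rightarrow$\,(ii), i.e.\ McCoy's theorem, from scratch. Your contrapositive argument is right: choosing the least $t\geq 1$ for which some nonzero $b$ annihilates all $t\times t$ minors (this set contains $n$ by the hypothesis on $\det A$, excludes $0$, and is upward closed by Laplace expansion), reducing to a $(t-1)\times(t-1)$ minor $B$ with $b\det B\neq 0$ by minimality, and then defining $v$ by cofactors of the $t\times t$ block on rows $1,\dots,t-1,i$ and columns $1,\dots,t$ so that $Av=0$ by Laplace expansion (a genuine $t\times t$ minor when $i\geq t$, a repeated-row determinant when $i<t$) while $v_t=b\det B\neq 0$. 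The tradeoff is that your proof is self-contained but longer; the paper's is shorter but relies on a citation. Either is fine in context, since the lemma is being recorded chiefly so the rest of \S\,\ref{sec:Fitt_gen} can use it freely.
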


\begin{proof}
The fact (i) $\Leftrightarrow$ (ii) is shown in \cite[Chapter III, \S 8, Proposition 3, p.~524]{BouAlg74}.
Since $\Cok(h)$ is annihilated by $\det(h)$, we have (ii) $\Rightarrow$ (iii).
If (iii) holds, the base change of $h$ from $R$ to $\Frac(R)$ is surjective, so it is isomorphic, which implies (i).
\end{proof}

Let us define $\cP_R$ as the category of finitely presented torsion $R$-module $P$ such that $\pd_R(P) \leq 1$, where $\pd_R(-)$ denotes the projective dimension.
By Lemma \ref{lem:tors_gen}, if a module $P$ satisfies an exact sequence of the form
\begin{equation}\label{eq:pres}
0 \to F \to F \to P \to 0
\end{equation}
with $F$ a free module of finite rank, then $P$ is in $\cP_R$.
Conversely, if $R$ is local, then every $P$ in $\cP_R$ satisfies an exact sequence of the form \eqref{eq:pres}, since in that case any projective module is necessarily free.

A fractional ideal $I$ of $R$ is defined as an $R$-submodule of $\Frac(R)$ such that $u I \subset R$ for some non-zero-divisor $u \in R$.

\begin{lem}\label{lem:FittP}
The following are true.
\begin{itemize}
\item[(1)]
For each $P \in \cP_R$, the Fitting ideal $\Fitt_R(P)$ is invertible as a fractional ideal of $R$.
\item[(2)]
Let $0 \to M' \to M \to P \to 0$ be an exact sequence of finitely generated torsion $R$-modules such that $P \in \cP_R$.
Then we have
\[
\Fitt_R(M) = \Fitt_R(P) \Fitt_R(M').
\]
\end{itemize}
\end{lem}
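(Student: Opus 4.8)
The plan is to reduce both parts to the case where $R$ is a local ring, exploiting two standard facts: the formation of $\Fitt_R(-)$ of a finitely generated module commutes with localization, and both ``being invertible as a fractional ideal'' and ``equality of two ideals'' can be checked after localizing at every maximal ideal. First I would record the easy preliminaries. In the situation of (2), since $M$ is finitely generated and $P$ is finitely presented, the submodule $M'$ is finitely generated, so $\Fitt_R(M')$ is defined. Next, membership in $\cP_R$ is stable under localization at a maximal ideal $\fm$: finitely presented and $\pd \le 1$ are obviously preserved, and torsionness is preserved because $P$, being finitely generated and torsion, is annihilated by some non-zero-divisor $f$ of $R$, and $f^N \in \Fitt_R(P)$ for $N$ the number of generators, so $f/1$ is a non-zero-divisor of $R_\fm$ (if $gf=0$ in $R_\fm$ then $g$ is killed by something outside $\fm$, forcing $g=0$ since $f$ is a non-zero-divisor of $R$), whence $P_\fm$ is killed by a non-zero-divisor of $R_\fm$.

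For part (1), it suffices to show $\Fitt_R(P)_\fm$ is principal generated by a non-zero-divisor for each maximal $\fm$, so assume $R$ local. As in the remark preceding the lemma, a finitely presented $P$ with $\pd_R(P)\le 1$ has a resolution $0 \to K \to R^b \to P \to 0$ with $K$ finitely generated projective, hence free, say $K \cong R^n$; tensoring with the flat $R$-algebra $\Frac(R)$ makes $R^n \to R^b$ an isomorphism (it is injective, and surjective because $\Frac(R)\otimes_R P = 0$), so $n = b$ and we obtain $0 \to R^n \xrightarrow{h} R^n \to P \to 0$. Then $\Fitt_R(P) = (\det h)$ by definition, and $\det h$ is a non-zero-divisor by Lemma \ref{lem:tors_gen}; a principal ideal generated by a non-zero-divisor is invertible (with inverse $(\det h)^{-1}R$). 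Invertibility in the general case then follows by localization.

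For part (2), again it is enough to prove the identity after localizing at an arbitrary maximal ideal, so assume $R$ local. Take the square presentation $0 \to R^n \xrightarrow{h} R^n \to P \to 0$ from part (1) and a presentation $R^{\oplus J} \xrightarrow{B} R^{\oplus I'} \to M' \to 0$. Lifting the chosen generators of $P$ to $M$ and adjoining those of $M'$ yields generators of $M$ indexed by $\{1,\dots,n\}\amalg I'$; each chosen relation of $P$ lifts to a relation of $M$ whose $M'$-component is recorded by some matrix $C$, and these together with the relations of $M'$ present $M$ by the block matrix $\left(\begin{smallmatrix} h & 0 \\ C & B\end{smallmatrix}\right)$. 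It remains to compute its Fitting ideal: a maximal minor that uses fewer than all $n$ columns of the $h$-block vanishes, because then its first $n$ rows are supported in fewer than $n$ columns, so no term of the Leibniz expansion survives; and a maximal minor using all $n$ columns of the $h$-block equals $\det h$ times a maximal minor of $B$ by expansion along the zero block. Hence $\Fitt_R(M) = (\det h)\,\Fitt_R(B) = \Fitt_R(P)\,\Fitt_R(M')$.

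I expect the main obstacle to be the bookkeeping in this last step: carefully building the block presentation of $M$ out of the extension $0 \to M' \to M \to P \to 0$ (checking that the off-diagonal block $C$ can indeed be arranged as displayed and that the listed relations generate all relations of $M$), and then the minor computation for the block-triangular matrix. By contrast, the localization reductions are routine once one checks, as above, that $\cP_R$ is stable under localization — which is the only point where the non-noetherian hypothesis needs a moment's care.
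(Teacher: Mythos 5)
Your proof is correct and follows essentially the same route as the paper: reduce to the local case by localization, use the square presentation $0\to R^n\xrightarrow{h}R^n\to P\to 0$ together with Lemma \ref{lem:tors_gen} for (1), and a horseshoe-style block presentation with the block-triangular Fitting computation for (2). The paper compresses the second step to ``follows by using the horseshoe lemma'' and points to \cite{Kata_05} for details, and your explicit block-matrix argument (including the vanishing of minors that miss a column of the $h$-block) is exactly what that citation unwinds to.
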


\begin{proof}
We sketch the proof (see \cite[Proposition 2.7]{Kata_05} for the details).
For both claims (1) and (2), it is enough to show them after localization at all prime ideals of $R$, so we may assume $R$ is local.
Then $P$ satisfies an exact sequence of the form \eqref{eq:pres}.
Now claim (1) follows from Lemma \ref{lem:tors_gen} and claim (2) follows by using the horseshoe lemma.
\end{proof}

Now we introduce the ``$n$-th shift'' $\Fitt^{[n]}_R(-)$ as in \cite[Theorem 2.6]{Kata_05}.
First we consider the ``once-shift'' $\Fitt^{[1]}_R(-)$, which actually suffices for the purpose of this paper.
For completeness, in Definition \ref{defn:shift_n}, we will also introduce the general $n$-th shift.

\begin{defn}
For a finitely presented torsion $R$-module $M$, we define a fractional ideal $\Fitt^{[1]}_R(M)$ as follows.
Let us take an exact sequence of $R$-modules
\begin{equation}\label{eq:resol1}
0 \to N \to P \to M \to 0
\end{equation}
with $P \in \cP_R$.
For instance, if $M$ is generated by $n$ elements and annihilated by a non-zero-divisor $f \in R$, we may take $P = (R/fR)^n$.
Since $M$ is finitely presented, $N$ is finitely generated, so its Fitting ideal is defined.
Then we define (using Lemma \ref{lem:FittP}(1))
\[
\Fitt^{[1]}_R(M) = \Fitt_R(P)^{-1} \Fitt_R(N).
\]
This is well-defined, that is, independent from the choice of \eqref{eq:resol1}.
\end{defn}

Let us sketch the proof of the independency from \eqref{eq:resol1}.
Let $0 \to N' \to P' \to M \to 0$ be another exact sequence with $P' \in \cP_R$.
Then, by using the pull-back $L$ of the maps $P \to M$ and $P' \to M$, we obtain a commutative diagram with exact rows and columns
\[
\xymatrix{
	& & N' \ar@{=}[r] \ar@{^(->}[d]
	& N' \ar@{^(->}[d]
	& \\
	0 \ar[r]
	& N \ar[r] \ar@{=}[d]
	& L \ar[r] \ar@{->>}[d]
	& P' \ar[r] \ar@{->>}[d]
	& 0\\
	0 \ar[r]
	& N \ar[r]
	& P \ar[r]
	& M \ar[r]
	& 0.
}
\]
Since $P, P' \in \cP_R$, by Lemma \ref{lem:FittP}(2), we obtain
\[
\Fitt_R(L) = \Fitt_R(P') \Fitt_R(N),
\quad
\Fitt_R(L) = \Fitt_R(P) \Fitt_R(N').
\]
These formulas imply 
\[
\Fitt_R(P)^{-1} \Fitt_R(N)
= \Fitt_R(P')^{-1} \Fitt_R(N'),
\]
as desired.

Finally let us introduce the general $n$-th shift.
See \cite[Theorem 2.6]{Kata_05} for the proof; to remove the noetherian hypothesis, we only have to suitably deal with the finiteness conditions.

\begin{defn}\label{defn:shift_n}
Let $n \geq 0$ be an integer.
Let $M$ be a torsion $R$-module such that there exists an exact sequence
\[
R^{a_n} \to R^{a_{n-1}} \to \dots \to R^{a_0} \to M \to 0
\]
for some integers $a_0, \dots, a_n \geq 0$.
For instance, when $n = 0$ (resp.~$n = 1$), this condition means that $M$ is finitely generated (resp.~finitely presented).
For such an $M$, we define a fractional ideal $\Fitt^{[n]}_R(M)$ as follows.
Let us take an exact sequence of $R$-modules
\begin{equation}\label{eq:resol2}
0 \to N \to P_1 \to \cdots \to P_n \to M \to 0
\end{equation}
with $P_1, \dots, P_n \in \cP_R$.
The existence of such a sequence follows from the assumption on $M$, and moreover then $N$ is finitely generated.
Then we define (using Lemma \ref{lem:FittP}(1))
\[
\Fitt^{[n]}_R(M) = \left( \prod_{i=1}^n \Fitt_R(P_i)^{(-1)^i} \right) \Fitt_R(N).
\]
This is well-defined, that is, independent from the choice of \eqref{eq:resol2}.
\end{defn}

\section*{Acknowledgments}

I am grateful to Daniel Valli\`eres for giving comments on an earlier version of this paper.
This work is supported by JSPS KAKENHI Grant Number 22K13898.

{
\bibliographystyle{abbrv}
\bibliography{biblio}

\begin{thebibliography}{10}

\bibitem{AK21}
M.~Atsuta and T.~Kataoka.
\newblock Fitting ideals of class groups for {CM} abelian extensions.
\newblock {\em to appear in Algebra \& Number Theory, arXiv:2104.14765}, 2021.

\bibitem{BN07}
M.~Baker and S.~Norine.
\newblock Riemann-{R}och and {A}bel-{J}acobi theory on a finite graph.
\newblock {\em Adv. Math.}, 215(2):766--788, 2007.

\bibitem{BouAlg74}
N.~Bourbaki.
\newblock {\em Elements of mathematics. {A}lgebra, {P}art {I}: {C}hapters 1-3}.
\newblock Hermann, Paris; Addison-Wesley Publishing Co., Reading, Mass., 1974.
\newblock Translated from the French.

\bibitem{CP18}
S.~Corry and D.~Perkinson.
\newblock {\em Divisors and sandpiles: An introduction to chip-firing}.
\newblock American Mathematical Society, Providence, RI, 2018.

\bibitem{Gon22}
S.~R. Gonet.
\newblock Iwasawa {T}heory of {J}acobians of {G}raphs.
\newblock {\em Algebr. Comb.}, 5(5):827--848, 2022.

\bibitem{Gree99}
R.~Greenberg.
\newblock Iwasawa theory for elliptic curves.
\newblock In {\em Arithmetic theory of elliptic curves ({C}etraro, 1997)},
  volume 1716 of {\em Lecture Notes in Math.}, pages 51--144. Springer, Berlin,
  1999.

\bibitem{GK15}
C.~Greither and M.~Kurihara.
\newblock Tate sequences and {F}itting ideals of {I}wasawa modules.
\newblock {\em Algebra i Analiz}, 27(6):117--149, 2015.

\bibitem{GKT20}
C.~Greither, M.~Kurihara, and H.~Tokio.
\newblock The second syzygy of the trivial {$G$}-module, and an equivariant
  main conjecture.
\newblock In {\em Development of {I}wasawa theory---the centennial of {K}.
  {I}wasawa's birth}, volume~86 of {\em Adv. Stud. Pure Math.}, pages 317--349.
  Math. Soc. Japan, Tokyo, 2020.

\bibitem{HMSV}
K.~Hammer, T.~W. Mattman, J.~W. Sands, and D.~Valli\`{e}res.
\newblock The special value $u=1$ of {A}rtin-{I}hara {$L$}-functions.
\newblock {\em preprint, arXiv:1907.04910}, 2019.

\bibitem{Iwa59}
K.~Iwasawa.
\newblock On {$\Gamma $}-extensions of algebraic number fields.
\newblock {\em Bull. Amer. Math. Soc.}, 65:183--226, 1959.

\bibitem{Kata_05}
T.~Kataoka.
\newblock Fitting invariants in equivariant {I}wasawa theory.
\newblock In {\em Development of {I}wasawa theory---the centennial of {K}.
  {I}wasawa's birth}, volume~86 of {\em Adv. Stud. Pure Math.}, pages 413--465.
  Math. Soc. Japan, Tokyo, 2020.

\bibitem{Kid80}
Y.~Kida.
\newblock {$l$}-extensions of {CM}-fields and cyclotomic invariants.
\newblock {\em J. Number Theory}, 12(4):519--528, 1980.

\bibitem{KM22}
S.~Kleine and K.~M\"uller.
\newblock On the growth of the {J}acobians in {$\mathbb{Z}_p^l$}-voltage covers
  of graphs.
\newblock {\em preprint, arXiv:2211.09763}, 2022.

\bibitem{MV21b}
K.~J. McGown and D.~Valli\`{e}res.
\newblock On abelian $\ell$-towers of multigraphs {III}.
\newblock {\em to appear in Annales Math\'{e}matiques du Qu\'{e}bec,
  arXiv:2107.07639}, 2021.

\bibitem{NSW08}
J.~Neukirch, A.~Schmidt, and K.~Wingberg.
\newblock {\em Cohomology of number fields}, volume 323 of {\em Grundlehren der
  Mathematischen Wissenschaften [Fundamental Principles of Mathematical
  Sciences]}.
\newblock Springer-Verlag, Berlin, second edition, 2008.

\bibitem{Nor76}
D.~G. Northcott.
\newblock {\em Finite free resolutions}.
\newblock Cambridge University Press, Cambridge-New York-Melbourne, 1976.
\newblock Cambridge Tracts in Mathematics, No. 71.

\bibitem{RV22}
A.~Ray and D.~Valli\`{e}res.
\newblock An analogue of {K}ida's formula in graph theory.
\newblock {\em preprint, arXiv:2209.04890}, 2022.

\bibitem{Ser80}
J.-P. Serre.
\newblock {\em Trees}.
\newblock Springer-Verlag, Berlin-New York, 1980.
\newblock Translated from the French by John Stillwell.

\bibitem{Ter11}
A.~Terras.
\newblock {\em Zeta functions of graphs: A stroll through the garden}, volume
  128 of {\em Cambridge Studies in Advanced Mathematics}.
\newblock Cambridge University Press, Cambridge, 2011.

\bibitem{Was97}
L.~C. Washington.
\newblock {\em Introduction to cyclotomic fields}, volume~83 of {\em Graduate
  Texts in Mathematics}.
\newblock Springer-Verlag, New York, second edition, 1997.

\end{thebibliography}
}

\end{document}